\documentclass{article}%
\usepackage{amssymb,amsmath,amsthm,color}%
\setcounter{MaxMatrixCols}{30}
\providecommand{\U}[1]{\protect\rule{.1in}{.1in}}
\setlength{\textheight}{21cm}\setlength{\textwidth}{15.5cm}
\setlength{\topmargin}{-1.5cm}
\setlength{\oddsidemargin}{-0.5cm}\setlength{\evensidemargin}{-0.5cm}
\newcommand\Cov{{\mathrm {Cov}}}

\newcommand\bkE{{\mathbb {E}}}
\newcommand\E{{\mathbb {E}}}

\newcommand\beq{\begin{equation}}
\newcommand\eeq{\end{equation}}

\hfuzz=5pt

\newtheorem{hypo}{Hypothesis}

\newtheorem{prop}[hypo]{Proposition}

\newtheorem{thm}[hypo]{Theorem}

\newtheorem{lem}[hypo]{Lemma}

\newtheorem{claim}[hypo]{Claim}

\newtheorem{defi}[hypo]{Definition}

\newtheorem{rqe}[hypo]{Remark}

\newtheorem{coro}[hypo]{Corollary}

\def\U{\mathcal{U}}

\begin{document}
\begin{center} {\bf \Large Empirical central limit theorems for ergodic automorphisms of the torus}\vskip15pt

J\'er\^ome Dedecker $^{a}$, Florence Merlev\`{e}de $^{b}$ and Fran\c{c}oise P\`ene  $^{c}$
\end{center}
\vskip10pt
$^a$ Universit\'e Paris Descartes, Sorbonne Paris Cit\'e, Laboratoire MAP5
and CNRS UMR 8145. E-mail: jerome.dedecker@parisdescartes.fr\\ \\
$^b$ Universit\'e Paris-Est, LAMA (UMR 8050), UPEMLV, CNRS, UPEC, F-77454
Marne-La-Vall\'ee. E-mail: florence.merlevede@univ-mlv.fr\\ \\
$^c$ Universit\'e de Brest,  Laboratoire de Math\'ematiques de Bretagne
Atlantique UMR CNRS 6205. Supported by the french ANR project Perturbations (ANR 10-BLAN 0106).
E-mail: francoise.pene@univ-brest.fr\vskip10pt
{\it Key words}:  Empirical distribution function, Kiefer process, Ergodic automorphisms of the torus,
Moment inequalities.\vskip5pt
{\it Mathematical Subject Classification} (2010): 60F17, 37D30.

\begin{center}
{\bf Abstract}\vskip10pt
\end{center}
Let $T$ be an ergodic automorphism of the $d$-dimensional torus ${\mathbb T}^d$, 
and $f$ be a continuous function from ${\mathbb T}^d$ to ${\mathbb R}^\ell$.
On the probability space 
${\mathbb T}^d$ equipped with the Lebesgue-Haar measure,
we prove the weak convergence of the sequential empirical process
of the sequence $(f \circ T^i)_{i \geq 1}$ under some mild conditions on the modulus of 
continuity of $f$. The proofs are based on new limit theorems, on new inequalities for 
non-adapted sequences, and on  new estimates of the conditional expectations of 
$f$ with respect to a natural filtration.

\section{Introduction}

Let $d\ge 2$ and $\mathbb T^d=\mathbb R^d/\mathbb Z^d$ be the
$d$-dimensional torus. For every $x\in\mathbb R^d$, we write $\bar x$ its class in $\mathbb T^d$. We denote by $\lambda$ the Lebesgue measure  on
$\mathbb R^d$, and by $\bar \lambda$ the Lebesgue measure on $\mathbb T^d$.

On the probability space $({\mathbb T}^d, \bar \lambda)$, we consider a group automorphism $T$ of $\mathbb T^d$.
We recall that $T$ is the quotient map of a linear map
$\tilde T:\mathbb R^d\rightarrow\mathbb R^d$
given by $\tilde T(x)=S\cdot x$, where $S$ is a $d\times d$-matrix with integer entries and with
determinant 1 or -1. The map $\tilde T$ preserves the infinite Lebesgue measure $\lambda$ on $\mathbb R^d$
and $T$ preserves the probability Lebesgue measure $\bar\lambda$.

We assume that $T$ is ergodic, which is equivalent to the fact that no eigenvalue of $S$
is a root of the unity. This hypothesis holds true in the case of hyperbolic automorphisms of
the torus (i.e. in the case when no  eigenvalue of $S$ has modulus one) but is much weaker.
Indeed, as mentionned in \cite{SLB}, the following matrix gives an example of an ergodic
non-hyperbolic automorphism of
$\mathbb T^4$~:
$$
S:=\left(\begin{array}{cccc}0&0&0&-1\\1&0&0&2\\0&1&0&0\\0&0&1&2\end{array}\right).
$$
When $T$ is ergodic but non-hyperbolic, the dynamical system $(\mathbb T^d,T,\bar\lambda)$
has no
Markov partition. However, it is possible to construct some measurable
partition (see \cite{Lind}), and to prove some decorrelation properties for
regular functions (see \cite{Lind,SLBFP}).

Let $\ell$ be some positive integer, and let  $f=(f_1, \ldots f_\ell)$ be a function from ${\mathbb T}^d$ to
${\mathbb R}^\ell$.
On the probability space $({\mathbb T}^d, \bar \lambda)$, the sequence
$(f\circ T^k)_{k\in {\mathbb Z}}$ is a stationary sequence of ${\mathbb R}^\ell$-valued random variables.
When $\ell=1$ and $f$ is square integrable,  Le Borgne \cite{SLB}
proved the functional central limit theorem and the Strassen strong invariance principle
for the partial sums
\begin{equation}\label{partial}
\sum_{i=1}^n (f \circ T^i - \bar \lambda (f))
\end{equation}
under weak hypotheses on the Fourier coefficients  of $f$, thanks to Gordin's method and to the partitions studied by Lind in \cite{Lind}. In the recent paper \cite{DeMePene}, we slightly improve
on Le Borgne's conditions,  and we  show how to obtain  rates of convergence  in the strong invariance principle
up to $n^{1/4}\log (n)$, by reinforcing the conditions on the Fourier coefficients of $f$.

Now,  for any $s \in {\mathbb R}^\ell$, define the partial sum
\begin{equation}\label{sn}
S_n(s)=  \sum_{k=1}^n ({\bf 1}_{f \circ T^k \leq s}-F(s)) \, ,
\end{equation}
where as usual ${\bf 1}_{f \circ T^k \leq s}=  {\bf 1}_{f_1 \circ T^k \leq s_1} \times \cdots
\times {\bf 1}_{f_\ell \circ T^k \leq s_\ell}$, and
$F(s)= \bar \lambda ({f  \leq s})$ is the multivariate
distribution function of $f$.

In this paper, we give some conditions on the modulus  of continuity of $f$
for the weak convergence
to a Gaussian process of the sequential empirical process
\begin{equation}\label{seqempirical}
\Big \{ \frac{S_{[nt]}(s)}{\sqrt n}, t \in [0,1], s \in {\mathbb R}^\ell \Big \} \, .
\end{equation}

The paper is organized as follows. Our main results are given in Section \ref{results} and proved in Section \ref{lastsec}.
The proofs require new probabilistic results established in Section \ref{probresults} combined with a key estimate for
toral automorphisms which is given in Section \ref{ineqauto}. Let us give now an overview of our results.

In Section \ref{Lp}, we consider the case where $\ell=1$ and $S_n$
is viewed as an ${\mathbb L}^p$-valued random variable for some $p\in [2, \infty[$ (this is possible because
$\int |S_n(s)|^p ds < \infty$ for any $p \in [2, \infty[$), so
that the sequential empirical process is an element of $D_{{\mathbb L}^p}([0,1])$, the space of
${\mathbb L}^p$-valued c\`adl\`ag functions. We prove the weak convergence 
of the process $\{ n^{-1/2} S_{[nt]}, t \in [0, 1]\}$ in
$D_{{\mathbb L}^p}([0,1])$ equipped with the uniform metric to a ${\mathbb L}^p$-valued
Wiener process, and we give the covariance operator of this Wiener process. The proof
is based on a new central limit theorem for dependent sequences with values in smooth Banach spaces, which is given in Section \ref{smooth}.

In Section \ref{secKiefer}, we state the convergence of the sequential empirical process (\ref{seqempirical}) in the space $\ell^\infty([0,1]\times {\mathbb R}^\ell)$
of bounded functions from $[0,1]\times {\mathbb R}^\ell$ to ${\mathbb R}$ equipped with the uniform metric. In that case, the limiting Gaussian process
is a generalization of the Kiefer process introduced by Kiefer in \cite{Kiefer} for the sequential empirical process of independent and identically distributed
random variables. The proof is based on a new Rosenthal inequality for dependent sequences (possibly non adapted), which is given in Section \ref{secRos}.
The weak convergence of the empirical process $\{ n^{-1/2}S_{n}(s),  s \in {\mathbb R}^\ell \}$
 has also been treated in \cite{DuJo}
and \cite{DeDu}. We shall be more precise on these two papers in Section \ref{secKiefer}.

To prove these results, we shall use a control of the conditional expectations
of continuous observables with respect to the filtration introduced by Lind
\cite{Lind}, involving the modulus of continuity of the observables (See Theorem
\ref{THM2}
of Section \ref{ineqauto}). As far as we know, such  controls were known for H\"older observables only
(see \cite{SLBFP}). Let us indicate that the inequalities given in Theorem \ref{THM2} are interesting by themselves. For instance one
can use them to establish weak invariance principle and
rates of convergence in the strong invariance principle
for the partial sums  (\ref{partial})
(see Section \ref{adresults}).

In this paper, the conditions on a function $f$ from  ${\mathbb T}^d$ to ${\mathbb R}$ will be expressed in terms of  its modulus of continuity $\omega(f,\cdot)$ defined as follows:
\begin{equation}\label{modulus}
\text{for $\delta>0$,}\ \ \ 
\omega(f,\delta):=\sup_{\bar x,\bar y\in\mathbb T^d\, :\, d_1(\bar x,\bar y)
   \le\delta}|f(\bar x)-f(\bar y)| \, ,
\end{equation}
where $d_1(\bar x,\bar y)=\min_{k\in\mathbb Z^d}\Vert x-y+k\Vert$ 
for some norm $\Vert \cdot \Vert$ on $\mathbb R^d$.

\section{Empirical central limit theorems}\label{results}
\setcounter{equation}{0}

\subsection{Empirical central limit theorem in ${\mathbb L}^p$}\label{Lp}

In this section, ${\mathbb L}^p$  is the space of Borel-measurable functions $g$ from $\mathbb R$ to $\mathbb R$ such that $\lambda (|g|^p) < \infty$,
$\lambda$ being the Lebesgue measure on ${\mathbb R}$. If $f$ is a bounded function, then, for any $p \in [2, \infty[$, the random variable  $S_n$ defined in (\ref{sn}) is an
${\mathbb L}^p$-valued random variable, and the process $\{ n^{-1/2} S_{[nt]}, t \in [0, 1]\}$ is a random
variable with values in $D_{{\mathbb L}^p}([0,1])$, the space of
${\mathbb L}^p$-valued c\`adl\`ag functions. In the next theorem, we give a condition on the modulus of continuity $\omega(f, \cdot)$ of $f$
under which  the process $\{ n^{-1/2} S_{[nt]}, t \in [0, 1]\}$ converges in distribution to an ${\mathbb L}^p$-valued  Wiener process,
in the space $D_{{\mathbb L}^p}([0,1])$ equipped with the uniform metric.
By an ${\mathbb L}^p$-valued  Wiener process with covariance operator $\Lambda_p$,
we mean a centered  Gaussian process
$W=\{W_t, t \in [0,1]\}$ such that
${\mathbb E}(\|W_{t}\|^2_{{\mathbb L}^p})<\infty$ for all
$t \in [0,1]$ and, for any $g,h$ in ${\mathbb L}^q$ ($q$ being the conjugate exponent of $p$),
$$
  {\mathrm{Cov}}\Big ( \int_{\mathbb R} g(u) W_{t}(u) du \, ,
  \int_{\mathbb R} h(u)  W_{s}(u) du \Big)=
  \min(t,s) \Lambda_p(g,h) \, .
$$

\begin{thm} \label{EmpLp}
Let $f: {\mathbb T}^d \rightarrow {\mathbb R}$ be a continuous function, with modulus
of continuity $\omega(f, \cdot)$. Let  $p \in [2, \infty[$, and let $q$ be its conjugate exponent.
  Assume that
  $$
 \int_0^{1/2} \frac{\big ( \omega(f,t) \big )^{1/p}}{t |\log t|^{1/p}} dt < \infty \, .
  $$
  Then the process $\{ n^{-1/2} S_{[nt]}, t \in [0, 1]\}$ converges
  in distribution in the space $D_{{\mathbb L}^p}([0,1])$ to an ${\mathbb L}^p$-valued
  Wiener process $W$, with covariance operator $\Lambda_p$  defined by
\begin{equation}\label{covdef}
   \Lambda_p (g,h) =\sum_{k \in {\mathbb Z}}
   \Cov\Big( \int_{\mathbb R} g(s) {\bf 1}_{f \leq s}ds, \int_{\mathbb R} h(s) {\bf 1}_{f\circ T^k \leq s} ds\Big)
  \, , \quad \text{for any $g,h$ in ${\mathbb L}^q$.}
\end{equation}
\end{thm}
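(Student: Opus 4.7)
The plan is to regard each centered random function $X_k(\cdot) := \mathbf{1}_{f \circ T^k \leq \cdot} - F(\cdot)$ as an element of the $2$-smooth Banach space $\mathbb{L}^p$ (recall $p \geq 2$), and to apply the functional central limit theorem for stationary sequences with values in $2$-smooth Banach spaces announced in Section~\ref{smooth}. The abstract criterion will translate into a summability condition on the $\mathbb{L}^2(\Omega, \mathbb{L}^p)$-norms of the conditional expectations $\mathbb{E}(X_k \mid \mathcal{F}_0)$ with respect to Lind's filtration (together with its conjugate, which handles $k \to -\infty$ as well); tightness in $D_{\mathbb{L}^p}([0,1])$ under the uniform metric will follow from the companion maximal moment inequality available in the same 2-smooth framework.

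Since the indicators $\mathbf{1}_{f \leq s}$ are not continuous, Theorem~\ref{THM2} cannot be applied to them directly. I would therefore smooth at scale $\delta > 0$ by letting $\varphi_{s,\delta}$ be the piecewise affine approximation of $\mathbf{1}_{(-\infty,s]}$ on an interval of width $\delta$, so that $\varphi_{s,\delta} \circ f$ has modulus of continuity at most $\omega(f, \cdot)/\delta$ and $|\mathbf{1}_{f \leq s} - \varphi_{s,\delta} \circ f| \leq \mathbf{1}_{s < f \leq s + \delta}$. The decomposition
\[
X_k(s) = \bigl(\varphi_{s,\delta} \circ f \circ T^k - \bar\lambda(\varphi_{s,\delta} \circ f)\bigr) + \bigl(\mathbf{1}_{f \circ T^k \leq s} - \varphi_{s,\delta} \circ f \circ T^k\bigr) - \bar\lambda\bigl(\mathbf{1}_{f \leq s} - \varphi_{s,\delta} \circ f\bigr)
\]
then lets Theorem~\ref{THM2} control the conditional expectation of the continuous first term, while the remaining two terms, integrated in $s$ over $\mathbb{R}$, contribute $O(\delta^{1/p})$ in $\mathbb{L}^p$-norm through the crude estimate $|\{s : s < f(x) \leq s+\delta\}| \leq \delta$ and stationarity. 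Balancing $\delta = \delta_k$ against the decay factor supplied by Theorem~\ref{THM2} and summing in $k$, the hypothesis $\int_0^{1/2} \omega(f,t)^{1/p}/(t|\log t|^{1/p})\,dt < \infty$ is precisely what delivers the summability demanded by the abstract Banach-valued CLT; the $|\log t|^{-1/p}$ weight reflects the gain extracted from $\mathbb{L}^p$-interpolation between the approximation error and the modulus bound.

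The covariance operator is identified by duality: applying $g, h \in \mathbb{L}^q$ to the $\mathbb{L}^p$-valued process reduces the computation of $\mathrm{Cov}\bigl(\int g\,W_t,\int h\,W_s\bigr)$ to the scalar CLT for the stationary real-valued sequence $\bigl(\int g(u)\,\mathbf{1}_{f \circ T^k \leq u}\,du\bigr)_k$, whose asymptotic variance equals the absolutely convergent series on the right-hand side of~(\ref{covdef}); absolute convergence follows again from Theorem~\ref{THM2} under the stated modulus condition, in the spirit of Le~Borgne~\cite{SLB}.

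The main obstacle is the joint calibration of the two error terms in the regularization step: the $\mathbb{L}^p$ approximation error decreases as $\delta \to 0$, while the conditional-expectation bound from Theorem~\ref{THM2} blows up like $1/\delta$ through the dilated modulus. Producing the precise logarithmic weight in the integrability criterion requires a dyadic analysis in $\delta$ rather than a single optimal choice of $\delta_k$, and it is this combinatorics—superimposed on the decay rate that Theorem~\ref{THM2} yields along Lind's filtration—that dictates the sharp shape of the hypothesis.
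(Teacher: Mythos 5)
Your overall architecture matches the paper's: view $X_k=\mathbf 1_{f\circ T^k\le\cdot}-F(\cdot)$ as an $\mathbb L^p$-valued stationary sequence, invoke the $2$-smooth Banach-space invariance principle of Section \ref{smooth} (Proposition \ref{PIFBanach}), and reduce everything to a weighted summability condition on $\Vert\,\Vert\E(Y_n|\F_0)\Vert_{\mathbb L^p}\Vert_p$ and $\Vert\,\Vert Y_{-n}-\E(Y_{-n}|\F_0)\Vert_{\mathbb L^p}\Vert_p$ supplied by Theorem \ref{THM2}. The gap is in the one step that actually produces the exponent $1/p$ in the hypothesis, namely how to apply Theorem \ref{THM2} to the discontinuous indicators. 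Your regularization at scale $\delta$ yields, for the adapted part, a bound of the form $C\big(\delta^{1/p}+\delta^{-1}\omega_{(s,e)}(f,\zeta^n)+\xi^n\big)$: the approximation error costs $\delta^{1/p}$ in $\mathbb L^p(ds)$, while the smoothed observable has modulus $\omega(f,\cdot)/\delta$. Optimizing in $\delta$ gives $\omega_{(s,e)}(f,\zeta^n)^{1/(p+1)}$, not $\omega_{(s,e)}(f,\zeta^n)^{1/p}$, and hence the strictly stronger condition $\int_0^{1/2}\omega(f,t)^{1/(p+1)}t^{-1}|\log t|^{-1/p}\,dt<\infty$. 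The dyadic multiscale refinement you invoke does not repair this: writing $\mathbf 1_{f\le s}$ as a telescoping sum of smoothings at scales $2^{-j}$ and bounding each increment by $\min\big(2^{-j/p},\,C2^{j}\omega_{(s,e)}(f,\zeta^n)\big)$, the sum over $j$ is dominated by the crossover scale and again returns $\omega_{(s,e)}(f,\zeta^n)^{1/(p+1)}$. So the calibration you yourself flag as ``the main obstacle'' is not resolved by your proposal, and the theorem as stated is not reached.

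The paper bypasses smoothing entirely. Since $|F_{f\circ T^n|\F_0}(t)-F(t)|\le 1$, one has
$\big(\E\int|F_{f\circ T^n|\F_0}(t)-F(t)|^p\,dt\big)^{1/p}\le\big(\E\int|F_{f\circ T^n|\F_0}(t)-F(t)|\,dt\big)^{1/p}$,
and the inner $\mathbb L^1(dt)$ norm is, by the Kantorovich--Rubinstein identity, exactly $\sup_{g\in\Lambda_1}\big|\E(g\circ f\circ T^n|\F_0)-\E(g\circ f)\big|$. For $1$-Lipschitz $g$ the composition $g\circ f$ is continuous with $\omega_{(s,e)}(g\circ f,\cdot)\le\omega_{(s,e)}(f,\cdot)$, so Theorem \ref{THM2} applies with \emph{no} $\delta^{-1}$ loss, and the exponent $1/p$ then comes for free from the elementary inequality $|u|^p\le|u|$. (The $|\log t|^{-1/p}$ weight, incidentally, does not come from any interpolation in $\delta$; it is the $n^{-1/p}$ weight produced when condition (\ref{condp0}) is reduced, via Lemma 6.1 of \cite{DeMePe} and the $p$-convexity of $\mathbb L^p$, to series in the conditional expectations.) If you want to salvage your write-up, replace the smoothing step by this duality argument; otherwise your method proves the theorem only under the weaker-regularity hypothesis with $\omega(f,t)^{1/(p+1)}$ in the integrand.
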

The proof of Theorem \ref{EmpLp} is based on results of Sections \ref{probresults} and \ref{ineqauto} and is
postponed to Section \ref{lastsec}.
\begin{rqe}
In particular, if $f$ is H\"older continuous, then the conclusion of
Theorem \ref{EmpLp} holds for any $p \in [2, \infty [$.
\end{rqe}

Let us give an application of this theorem to the Kantorovich-Rubinstein distance between the empirical measure of $(f\circ T^i)_{1\leq i \leq n}$ and the distribution $\mu$ of $f$. Let
$$
  \mu_n= \frac{1}{n}\sum_{i=1}^n \delta_{f\circ T^i} \quad \text{and} \quad
  \mu_{n,k}= \frac{1}{n}\Big((n-k)\mu + \sum_{i=1}^k \delta_{f\circ T^i}\Big) \, .
$$
The Kantorovich distance between two probability measures $\nu_1$ and $\nu_2$
is defined as
$$
  K(\nu_1,\nu_2)= \inf \Big \{ \int|x-y| \nu(dx,dy),
  \nu \in {\mathcal M}(\nu_1,\nu_2) \Big \},
$$
where ${\mathcal M}(\nu_1,\nu_2)$ is the set of probability measures with margins
$\nu_1$ and $\nu_2$.
\begin{coro}\label{Kanto}
Let $f: {\mathbb T}^d \rightarrow {\mathbb R}$ be a continuous function, with modulus
of continuity $\omega(f, \cdot)$.
  Assume that
  $$
 \int_0^{1/2} \frac{\sqrt{\omega(f,t)}}{t \sqrt{|\log t|}} dt < \infty \, .
  $$
  Then $\sqrt n K(\mu_n, \mu)$ converges in distribution to
  $\|W_1\|_{{\mathbb L}^1}$, and
  $\sup_{1\leq k \leq n} \sqrt n K(\mu_{n,k}, \mu)$ converges in distribution to
  $\sup_{t \in [0,1]} \|W_t\|_{{\mathbb L}^1}$, where $W$ is the ${\mathbb L}^2$-valued
  Wiener process with covariance operator $\Lambda_2$ defined by $(\ref{covdef})$.
\end{coro}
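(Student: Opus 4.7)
The plan is to reduce both statements to the weak convergence provided by Theorem~\ref{EmpLp} with $p=2$, via the classical representation of the Kantorovich distance on the real line together with the continuous mapping theorem.

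The starting point is the identity, valid for any probability measures $\nu_1,\nu_2$ on $\mathbb R$ with distribution functions $F_1,F_2$,
\[
K(\nu_1,\nu_2)=\int_{\mathbb R}|F_1(s)-F_2(s)|\,ds.
\]
Applied to the pairs $(\mu_n,\mu)$ and $(\mu_{n,k},\mu)$, this yields $\sqrt n\, K(\mu_n,\mu)=\|n^{-1/2}S_n\|_{{\mathbb L}^1}$ and $\sqrt n\, K(\mu_{n,k},\mu)=\|n^{-1/2}S_k\|_{{\mathbb L}^1}$, since in the latter case only the partial sum $\sum_{i=1}^k ({\bf 1}_{f\circ T^i\le s}-F(s))$ survives in the difference of the two distribution functions. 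Writing $X_n(t):=n^{-1/2}S_{[nt]}$, which is piecewise constant in $t$, we thus obtain
\[
\sqrt n\, K(\mu_n,\mu)=\|X_n(1)\|_{{\mathbb L}^1},\qquad
\sup_{1\le k\le n}\sqrt n\, K(\mu_{n,k},\mu)=\sup_{t\in[0,1]}\|X_n(t)\|_{{\mathbb L}^1}.
\]

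The hypothesis on $\omega(f,\cdot)$ in the corollary is precisely the one required by Theorem~\ref{EmpLp} for $p=2$, so $X_n$ converges in distribution to the ${\mathbb L}^2$-valued Wiener process $W$ of covariance operator $\Lambda_2$, in $D_{{\mathbb L}^2}([0,1])$ equipped with the uniform metric. Since $f$ is continuous on the compact torus it is bounded, say $|f|\le M$, so that $S_n(s)=0$ whenever $|s|>M$; the same support property is inherited by $W$, because the covariance formula $(\ref{covdef})$ makes $\Lambda_2(g,g)$ vanish for every $g\in{\mathbb L}^2$ supported outside $[-M,M]$. On the compact interval $[-M,M]$, the Cauchy-Schwarz inequality yields $\|g\|_{{\mathbb L}^1}\le \sqrt{2M}\,\|g\|_{{\mathbb L}^2}$, so both functionals $x\mapsto \|x(1)\|_{{\mathbb L}^1}$ and $x\mapsto \sup_{t\in[0,1]}\|x(t)\|_{{\mathbb L}^1}$ are Lipschitz from $D_{{\mathbb L}^2}([0,1])$ (with the uniform metric) to $\mathbb R$. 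The continuous mapping theorem applied to these two functionals then delivers the two convergences stated in the corollary.

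Once Theorem~\ref{EmpLp} is taken as granted, no real obstacle remains; the only mildly delicate point is the verification that $W$ is almost surely supported in $[-M,M]$, which is precisely what legitimates moving from the ${\mathbb L}^2$-topology of the ambient process to the ${\mathbb L}^1$-norm appearing in the Kantorovich representation.
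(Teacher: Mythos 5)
Your proposal is correct and follows essentially the same route as the paper: apply Theorem \ref{EmpLp} with $p=2$, use the boundedness of $f$ to confine the processes to $[-M,M]$ so that the ${\mathbb L}^1$-norm becomes a continuous functional on the relevant subspace of ${\mathbb L}^2$, and conclude via the Kantorovich representation $K(\nu_1,\nu_2)=\int_{\mathbb R}|F_{\nu_1}(t)-F_{\nu_2}(t)|\,dt$ together with the continuous mapping theorem. Your explicit justification that $W$ is supported in $[-M,M]$ (via the vanishing of $\Lambda_2$ on functions supported outside $[-M,M]$) and the Cauchy--Schwarz Lipschitz bound are slightly more detailed than the paper's, but the argument is the same.
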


\begin{proof}[Proof of Corollary \ref{Kanto}]
Applying Theorem \ref{EmpLp} with $p=2$, we know that
$\{ n^{-1/2} S_{[nt]}, t \in [0, 1]\}$ converges
in distribution in the space $D_{{\mathbb L}^2}([0,1])$ to an ${\mathbb L}^2$-valued
Wiener process $W$, with covariance operator $\Lambda_2$  defined by
(\ref{covdef}).
Since $f$ is continuous on ${\mathbb T}^d$,
it follows that $|f|\leq M$ for some positive constant $M$, so that $S_{[nt]}(s)=0$ and
$W_t(s)=0$ for any $t\in [0,1]$ and any $|s|>M$.
Since  $\|\cdot \|_{{\mathbb L}^1}$ is a continuous function on the space of functions
in ${\mathbb L}^2$ with support in $[-M, M]$, it follows
that $n^{-1/2} \|S_n\|_{{\mathbb L}_1}$ converges in distribution to
$\|W_1\|_{{\mathbb L}^1}$, and that $
\sup_{t \in [0,1]}n^{-1/2} \|S_{[nt]}\|_{{\mathbb L}_1}$ converges in distribution
to $\sup_{t \in [0,1]} \|W_t\|_{{\mathbb L}^1}$. Now, if $\nu_1$
and $\nu_2$ are probability measures on the real line, with distribution functions
$F_{\nu_1}$ and $F_{\nu_2}$ respectively,
$$
  K(\nu_1,\nu_2)= \int_{\mathbb R} |F_{\nu_1}(t)-F_{\nu_2}(t)| dt \, .
$$
Hence $nK(\mu_n, \mu)=\|S_n\|_{{\mathbb L}_1}$ and
$\sup_{1\leq k \leq n} nK(\mu_{n,k}, \mu)=\sup_{t \in [0,1]}\|S_{[nt]}\|_{{\mathbb L}_1}$,
and the result follows.
\end{proof}

\subsection{Weak convergence to the Kiefer process} \label{secKiefer}
Let $\ell$ be a positive integer.
Let $f=(f_1, \ldots, f_\ell)$ be a continuous function from ${\mathbb T}^d$ to ${\mathbb R}^\ell$. The modulus of continuity $\omega(f, \cdot) $ of $f$
is defined by
$$
  \omega(f, x)= \sup_{1\leq i \leq \ell} \omega(f_i,x) \, ,
$$
where we recall that $\omega(f_i,x) $ is defined by equation (\ref{modulus}).

As usual, we denote by $\ell^\infty([0,1]\times {\mathbb R}^\ell)$ the space of bounded functions from $[0,1]\times {\mathbb R}^\ell$ to ${\mathbb R}$ equipped with the
uniform norm. For details on weak convergence on the non separable space
$\ell^\infty([0,1]\times {\mathbb R}^\ell)$, we refer to \cite{VW}
(in particular, we shall not discuss any measurability problems, which
can be handled by using the outer probability).

For any positive integer $\ell$ and any $\alpha \in ]0,1]$, let
\begin{equation}\label{defaellalpha}
 a(\ell, \alpha)= \min_{p\geq \max(\ell +2, 2 \ell)} k_{\ell, \alpha}(p), \
 \text{where} \
 k_{\ell, \alpha}(p)= \max \Big (\frac{p}{\alpha (p-2\ell)} , \frac{(p-1) ( 2 \alpha + p)} {p \alpha} \Big )  \, .
\end{equation}
Note that this minimum is reached 
at $p_1=\max(3, p_0)$, where  $p_0$ is the unique solution in $]2\ell, 4 \ell[$  of the equation 
\begin{equation}\label{debileq}
  \frac{p}{(p-2\ell)}=\frac{(p-1)(p+2\alpha)}{p} 
\end{equation}
(in particular, $p_1=p_0$ if $\ell>1$). 

\medskip

We are now in position to state the main result of this section.

\begin{thm}\label{Kiefer}
Let $f=(f_1, \ldots, f_\ell): {\mathbb T}^d \rightarrow {\mathbb R}^\ell$ be a continuous function, with modulus
of continuity $\omega(f, \cdot)$. Assume that the distribution functions of the $f_i$'s are H\"older
continuous of order $\alpha \in ]0, 1]$. If
$$\omega(f,x)\leq C |\log (x)|^{-a} \quad \text{for some} \quad  a> a(\ell, \alpha) \, , 
$$
then the process $\{ n^{-1/2} S_{[nt]}(s), t \in [0, 1], s \in {\mathbb R}^\ell\}$ converges
in distribution in the space $\ell^\infty([0,1]\times {\mathbb R}^\ell)$ to a Gaussian process $K$
with covariance function $\Gamma$ defined by: for any $(t,t') \in [0,1]^2$ and
any $(s,s') \in {\mathbb R}^\ell \times {\mathbb R}^\ell$,
$$
\Gamma(t,t',s,s')=min(t,t') \Lambda (s, s') \quad \text{with} \quad
\Lambda(s,s')=\sum_{k \in {\mathbb Z}}{\mathrm{Cov}}({\bf 1}_{f \leq s}, {\bf 1}_{f\circ T^k \leq s'}) \, .
$$
\end{thm}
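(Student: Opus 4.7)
The plan is to prove Theorem \ref{Kiefer} by the classical two-step argument for weak convergence in $\ell^\infty([0,1]\times \mathbb{R}^\ell)$: convergence of the finite-dimensional distributions together with asymptotic equicontinuity of the processes with respect to a suitable pseudo-metric. The finite-dimensional convergence is the easier ingredient; the tightness, which is precisely what forces the logarithmic modulus condition with threshold $a(\ell,\alpha)$, is where the work lies.

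For the finite-dimensional convergence, I would fix points $(t_1,s_1),\ldots,(t_m,s_m)$ in $[0,1]\times \mathbb{R}^\ell$ and, after a Cram\'er-Wold reduction, prove a one-dimensional CLT for an arbitrary linear combination $\sum_j \lambda_j n^{-1/2} S_{[nt_j]}(s_j)$. Since $\mathbf{1}_{f\le s}$ is merely bounded, I would first replace each coordinate indicator by a Lipschitz approximation at scale $\delta$, apply the CLT to the smoothed Birkhoff sum by invoking either the Banach-valued CLT of Section \ref{smooth} or a Gordin-type martingale approximation based on the conditional-expectation bounds of Theorem \ref{THM2}, and then let $\delta\downarrow 0$. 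The cost of this replacement is controlled in $L^2$ by $C\delta^{\alpha/2}$ thanks to the H\"older continuity of the marginal distribution functions of the $f_i$'s. The covariance $\min(t,t')\Lambda(s,s')$ is identified by summing covariances of the smoothed observables and passing to the limit.

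The heart of the argument is the tightness. Since $f$ is continuous on a compact torus, $s$ effectively ranges in $[-M,M]^\ell$. I would discretise $s$ on a geometric sequence of grids $G_k$ of mesh $\varepsilon_k=2^{-k}$ and cardinality $|G_k|\lesssim \varepsilon_k^{-\ell}$, and for each $s\in [-M,M]^\ell$ and each level $k$ bracket the indicator by lower/upper functions $\underline{\psi}_{s,k}\le \mathbf{1}_{f\le s}\le \overline{\psi}_{s,k}$ obtained by smoothing $f$ at a scale $\delta_k\to 0$. The H\"older assumption on the marginals yields
\begin{equation*}
  \bar\lambda\bigl(\overline{\psi}_{s,k}-\underline{\psi}_{s,k}\bigr)\le C\bigl(\omega(f,\delta_k)+\varepsilon_k\bigr)^{\alpha}.
\end{equation*}
Because each bracket is built by pre-composing $f$ with a smooth truncation, it inherits the modulus of continuity of $f$ up to a constant, so Theorem \ref{THM2} applies uniformly to the whole family. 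Feeding these brackets into the Rosenthal-type inequality of Section \ref{secRos} then delivers $p$-th moment bounds of order $n^{p/2}\bigl(\omega(f,\delta_k)+\varepsilon_k\bigr)^{\alpha p/2}$ plus a lower-order remainder from the Rosenthal tail term.

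A chaining argument reduces asymptotic equicontinuity to the convergence of two series over the scales $k$. The first series, which controls the jump between successive grids, yields the exponent $p/(\alpha(p-2\ell))$: a union bound over $\varepsilon_k^{-\ell}$ grid points costs a factor $\varepsilon_k^{-\ell/p}$ in the $p$-th moment, which must be beaten by $\varepsilon_k^{\alpha/2}$. The second series, which controls the refinement of the smoothing scale $\delta_k$, yields the exponent $(p-1)(p+2\alpha)/(p\alpha)$, coming from the trade-off between the dominant $n^{1/2}$-term and the tail term in Rosenthal's inequality for the stationary dependent sums. Both series converge exactly when $a>k_{\ell,\alpha}(p)$, so optimising over $p\ge \max(\ell+2,2\ell)$ returns the sharp threshold $a>a(\ell,\alpha)$ of the statement. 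The $t$-direction is handled by a standard L\'evy-type maximal inequality exploiting the block structure of the partial sums, combined with the just-obtained equicontinuity in $s$.

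The main obstacle is the bookkeeping in this chaining step: one must simultaneously refine the grid in $s$, tune the smoothing scale $\delta_k$ so that $\omega(f,\delta_k)^\alpha\lesssim k^{-a\alpha}$, and verify uniform applicability of Theorem \ref{THM2} together with the Rosenthal inequality of Section \ref{secRos} to the full family of brackets. Matching the two exponent conditions so that they combine into the sharp threshold $a(\ell,\alpha)$ determined by \eqref{debileq} is the technically delicate point.
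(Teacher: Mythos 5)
Your overall architecture is the same as the paper's: finite-dimensional convergence by Cram\'er--Wold and a projective criterion, and tightness by a bracketing/chaining argument fed by Rosenthal-type moment inequalities for $\max_k|S_k(g)|$, with the indicators regularized at a small scale so that Theorem \ref{THM2} and the H\"older continuity of the marginal distribution functions can be exploited. The paper packages the chaining into Proposition \ref{p2} (a bracketing-entropy criterion, with $\mathcal{N}_{P,1}(x,\mathcal{F})\le Cx^{-\ell}$ obtained from quantile grids) and verifies its hypothesis \eqref{squinousfaut} via Proposition \ref{consdirect} together with Lemmas \ref{lmamajcov} and \ref{lmacalculcoeff}; this is a reorganization of what you describe rather than a different route.

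There are, however, two genuine gaps in your sketch, and they sit exactly where the threshold $a(\ell,\alpha)$ is decided. First, the claim that each bracket ``inherits the modulus of continuity of $f$ up to a constant, so Theorem \ref{THM2} applies uniformly to the whole family'' is false: if the bracket is $h_{\delta_k}\circ f$ with $h_{\delta_k}$ a $\delta_k^{-1}$-Lipschitz truncation, then one only gets $\omega(h_{\delta_k}\circ f,\cdot)\le \delta_k^{-1}\omega(f,\cdot)$, and the factor $\delta_k^{-1}$ blows up along your chain. The technical content of Lemmas \ref{lmamajcov} and \ref{lmacalculcoeff} is precisely the optimization of this smoothing scale against the time lag (e.g.\ $\varepsilon=\|g_{s,t}\|_{P,1}^{1/(\alpha+\beta)}k^{-a\beta/(\alpha+\beta)}$ or $\varepsilon=k^{-ap/(\alpha+p)}$), and the resulting losses are what produce the decay exponents $a\alpha/(\beta+\alpha)$, $a\alpha/(\alpha+p)$ and $2a\alpha/(2\alpha+p)$. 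Second, and relatedly, your claimed moment bound $n^{p/2}\bigl(\omega(f,\delta_k)+\varepsilon_k\bigr)^{\alpha p/2}$ amounts to asserting $\sum_j|\mathrm{Cov}(g(X_0),g(X_j))|\lesssim\|g\|_{P,1}$, i.e.\ $r=2$ in \eqref{squinousfaut}. That is not available here: Lemma \ref{lmamajcov} only yields $\|g\|_{P,1}^{(\beta+\alpha-1)/(\beta+\alpha)}$ for some $\beta>1$ subject to $a>1+\beta/\alpha$, hence $r=2(\beta+\alpha)/(\beta+\alpha-1)>2$, and optimizing $\beta$ against the entropy condition $p>\ell r$ is exactly what generates the branch $p/(\alpha(p-2\ell))$ of $k_{\ell,\alpha}(p)$. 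If your bound with $r=2$ were correct, the entropy condition would reduce to $p>2\ell$ and a different threshold would come out. So although you state the correct final conditions, the sketch does not contain a derivation that produces them; the missing ingredient is the quantitative three-way trade-off between the smoothing scale, the decorrelation rates of Theorem \ref{THM2}, and the H\"older exponent $\alpha$.
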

The proof of Theorem \ref{Kiefer} is given in Section \ref{lastsec}. It uses results of Sections \ref{probresults} and
\ref{ineqauto}.
\begin{rqe} \label{cardan}
Using the Cardan formulas (see the appendix) to solve (\ref{debileq}), we get 
$$p_0=2\frac{\ell+1-\alpha}3+2\sqrt{-\frac {p'} 3}\cos\left(\frac 13\arccos\left(-\frac q2\sqrt{\frac {27}{-(p')^3}}\right)\right) \, ,$$
with
$$p':= -4\alpha \ell+2 \ell-2\alpha-\frac 13 (-2\ell+2\alpha-2)^2<0 $$
and
$$q:=\frac 1{27}(-2\ell+2\alpha-2)(2(-2\ell+2\alpha-2)^2+36\alpha\ell-18\ell+18\alpha)+4\alpha\ell \, .$$
For example, for $\alpha=\ell=1$, we get $p_0\sim 2.9$
and finally $a(1,1)=10/3$.

Recall that, by Theorem \ref{EmpLp}, if $\ell=1$ and $p\in [2, \infty[$,  the weak invariance principle holds in $D_{{\mathbb L}^p}([0,1])$ as soon as $a>p-1$ 
without any condition on the distribution function of $f$.
\end{rqe}

The weak convergence of the (non sequential) empirical process $\{ n^{-1/2} S_{n}(s),  s \in {\mathbb R}^\ell\}$
has been studied in \cite{DuJo} and \cite{DeDu}. When $\ell=1$, a consequence of the main result of the paper \cite{DuJo}
is that the empirical process converges weakly to a Gaussian process for any H\"older continuous function $f$ having an H\"older
continuous distribution function. In the paper \cite{DeDu} this result is extended to any dimension $\ell$, under the assumptions that $f$ is H\"older continuous and that the moduli of continuity of the distribution functions of the $f_i$'s are smaller than $C|\log(x)|^{-a}$ in a neighborhood of $0$, for some $a>1$.

Note that, in our case,  one cannot apply Theorem 1 of \cite{DeDu}. Indeed, one cannot prove the multiple mixing for the sequence
$(f\circ T^i)_{i \in {\mathbb Z}}$ by assuming only that $\omega(f,x)\leq C |\log (x)|^{-a}$ in a neighborhood of zero (in that
case one can only prove that  $|{\mathrm {Cov}}(f, f\circ T^n)|$ is $O(n^{-a})$). However, even if our condition
on the regularity of $f$ is much weaker than in \cite{DeDu}, our result cannot be directly compared to that of \cite{DeDu},
because we assume that the distribution functions of the $f_i$'s are H\"older continuous of order $\alpha$, which is a stronger
assumption than the corresponding one in \cite{DeDu}.

\section{Probabilistic results}\label{probresults}
\setcounter{equation}{0}
In this section, $C$ is a positive constant which may vary from lines to lines, and
the notation
 $a_n \ll b_n$ means that there exists a numerical constant $C$ not
depending on $n$ such that  $a_n \leq  Cb_n$, for all positive integers $n$.

\subsection{Limit theorems and inequalities for stationary sequences}
Let $(\Omega,{\mathcal A}, {\mathbb P} )$ be a
probability space, and $T:\Omega \mapsto \Omega$ be
a bijective bimeasurable transformation preserving the probability ${\mathbb P}$.
For a $\sigma$-algebra ${\mathcal F}_0$ satisfying ${\mathcal F}_0
\subseteq T^{-1 }({\mathcal F}_0)$, we define the nondecreasing
filtration $({\mathcal F}_i)_{i \in {\mathbb Z}}$ by ${\mathcal F}_i =T^{-i
}({\mathcal F}_0)$. Let ${\mathcal {F}}_{-\infty} = \bigcap_{k \in {\mathbb
Z}} {\mathcal {F}}_{k}$ and ${\mathcal {F}}_{\infty} = \bigvee_{k \in
{\mathbb Z}} {\mathcal {F}}_{k}$.
Let ${\mathcal I}$ be the $\sigma$-algebra of $T$-invariant sets.
As usual, we say that $(T, {\mathbb P})$ is ergodic if each element $A$ of ${\mathcal I}$
is such that ${\mathbb P}(A)=0$ or 1.

Let $({\mathbb B}, |\cdot |_{\mathbb B})$ be a separable Banach space.
For a random variable $X$ with values in ${\mathbb B}$, let
$\Vert X \Vert_{p}= (  \E ( | X|^p _{{\mathbb B}}))^{1/p}$
and  ${\mathbb L}^{p} ({\mathbb B})$ be the space of ${\mathbb B}$-valued
random variables such that $\Vert X \Vert_{p}< \infty$.
For $X\in {\mathbb L}^1 ({\mathbb B})$, we shall use the notations $\E_k (X) = \E (X | {\mathcal F}_k)$, $\E_\infty (X) = \E (X | {\mathcal F}_\infty)$,
 $\E_{-\infty} (X) = \E (X | {\mathcal F}_{-\infty})$, and $P_k(X)= \E_{k}(X)-\E_{k-1}(X)$.
Recall that ${\mathbb E}(X|\mathcal F_n)\circ T^m={\mathbb E}(X\circ T^m|\mathcal F_{n+m})$.

Let $X_0$ be a random variable with values in ${\mathbb B}$.
Define the stationary sequence  $(X_i)_{i \in \mathbb Z}$ by
$X_i = X_0 \circ T^i$, and the partial sum $S_n$ by $S_n=X_1+X_2+\cdots + X_n$.

\subsubsection{Weak invariance principle in smooth Banach spaces}\label{smooth}
Following Pisier \cite{Pi}, we say that
a Banach space $({\mathbb B} , | \cdot |_{\mathbb B})$ is $2$-smooth if
there exists an equivalent norm $\|\cdot\|$ such that
$$
 \sup_{t >0} \Big \{\frac{1}{t^2} \sup \{\Vert x + ty  \Vert  + \Vert x
-ty  \Vert -2 :  \Vert x \Vert= \Vert y
\Vert = 1 \}\Big \}  < \infty \, .
$$
From \cite{Pi}, we know that if ${\mathbb B}$ is $2$-smooth and
separable, then there exists a constant $K$ such that, for any
sequence of  ${\mathbb B}$-valued martingale differences $(D_i)_{i
\geq 1}$,
\begin{equation}\label{assouad}
       {\mathbb E}(|D_1+ \cdots +D_n|_{\mathbb B}^2) \leq K
       \sum_{i=1}^n {\mathbb E}(|D_i|^2_{\mathbb B})\, .
\end{equation}
From \cite{Pi}, we see that $2$-smooth Banach spaces  play
the same role for martingales as spaces of type $2$  for sums of
independent variables.
Note that, for
any measure space $(T, {\mathcal A}, \nu)$, ${\mathbb L}^p(T, {\mathcal
A}, \nu)$ is $2$-smooth with $K=p-1$ for any $p \geq 2$, and that
any separable Hilbert space is $2$-smooth with $K=2$.

Let $D_{{\mathbb B}}([0,1])$ be the space of
${\mathbb B}$-valued c\`adl\`ag functions. In the next theorem, we give a condition
under which the process $\{ n^{-1/2} S_{[nt]}, t \in [0, 1]\}$ converges in distribution to a ${\mathbb B}$-valued  Wiener process,
in the space $D_{{\mathbb B}}([0,1])$ equipped with the uniform metric.

By a ${\mathbb B}$-valued  Wiener process with covariance operator $\Lambda_{\mathbb B}$,
we mean a centered  Gaussian process
$W=\{W_t, t \in [0,1]\}$ such that
${\mathbb E}(|W_{t}|^2_{{\mathbb B}})<\infty$ for all
$t \in [0,1]$ and, for any $g,h$ in
the dual space ${\mathbb B}^*$,
$$
  {\mathrm{Cov}}(  g (W_{t}),
   h(W_{s}) )=
  \min(t,s) \Lambda_{\mathbb B}(g,h) \, .
$$

\begin{prop}\label{PIFBanach}
Assume that ${\mathbb B}$ is a 2-smooth Banach space having a Schauder Basis,
that $(T, {\mathbb P})$ is ergodic, that
$\|X_0\|_{2}<\infty$ and that ${\mathbb E}(X_0)=0$.
If $\E_{- \infty} (X_0) = 0$ a.s., $X_0$ is ${\mathcal F}_{\infty}$-measurable, and 
\begin{equation}\label{condp0}
\sum_{k \in {\mathbb Z}} \|P_0(X_i)\|_{2} < \infty \, ,
\end{equation}
then the process
$
 \{ n^{-1/2} S_{[nt]} , t \in [0,1]  \}
$
converges in distribution in the space $D_{{\mathbb B}}([0,1])$ equipped with the uniform
metric
to a ${\mathbb B}$-valued
Wiener process $W_{\Lambda_{\mathbb B}}$,
where $\Lambda_{\mathbb B}$ is the covariance operator defined by
$$
\text{for any $g, h$ in ${\mathbb B}^*$,}  \quad
\Lambda_{\mathbb B}(g,h)= \sum_{k \in {\mathbb Z}} {\mathrm{Cov}}(g(X_0), h(X_k)) \, .
$$
\end{prop}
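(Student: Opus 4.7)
The plan is to prove Proposition \ref{PIFBanach} by a martingale approximation combined with a functional central limit theorem for stationary ergodic martingales in 2-smooth Banach spaces.

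First I would define, for each $i \in \mathbb{Z}$,
$$
D_i = \sum_{k \in \mathbb{Z}} P_i(X_{i+k}) = \sum_{j \in \mathbb{Z}} P_i(X_j),
$$
the series converging in $\mathbb{L}^2(\mathbb{B})$ thanks to hypothesis (\ref{condp0}) and the triangle inequality. Since $D_i = D_0 \circ T^i$, each $D_i$ is $\mathcal{F}_i$-measurable and satisfies $\mathbb{E}_{i-1}(D_i) = 0$, so that $(D_i,\mathcal{F}_i)_{i \in \mathbb{Z}}$ is a stationary ergodic martingale difference sequence. Set $M_n = D_1 + \cdots + D_n$.

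The next step is to control the remainder $S_n - M_n$. Expanding both $S_n$ and $M_n$ as double sums of the variables $P_k(X_j)$ and identifying the cells in which they differ yields
$$
S_n - M_n = \sum_{j=1}^n \mathbb{E}_0(X_j) + \sum_{j=1}^n \bigl(X_j - \mathbb{E}_n(X_j)\bigr) - \sum_{i=1}^n \sum_{j \leq 0} P_i(X_j) - \sum_{i=1}^n \sum_{j > n} P_i(X_j),
$$
after using the identities $\mathbb{E}_0(X_j) = \sum_{k \leq 0} P_k(X_j)$ and $X_j - \mathbb{E}_n(X_j) = \sum_{k > n} P_k(X_j)$, which hold because $\mathbb{E}_{-\infty}(X_0) = 0$ and $X_0$ is $\mathcal{F}_\infty$-measurable. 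Each of the four terms is a sum of martingale differences, either indexed by $i$ (the last two) or by $k$ (the first two), so Pisier's inequality (\ref{assouad}) bounds the squared $\mathbb{L}^2(\mathbb{B})$-norm of each by $\sum_l \tilde a_l^2$, where $(\tilde a_l)$ is a tail of the summable series $\sum_k \|P_0(X_k)\|_2$ and therefore tends to $0$. A Cesàro-type estimate then gives $\|S_n - M_n\|_2 = o(\sqrt{n})$; the maximal version $\mathbb{E}(\max_{1 \leq k \leq n} |S_k - M_k|_{\mathbb{B}}^2) = o(n)$ follows by applying Doob's $\mathbb{L}^2$ inequality to each of the four martingale pieces.

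Finally I would apply a martingale functional central limit theorem to $(D_i)$: projecting $\mathbb{B}$ onto the span of the first $N$ Schauder basis vectors via some $\pi_N$, the classical vector-valued martingale FCLT (together with the ergodic theorem identifying the limiting covariance) gives convergence of finite-dimensional projections, while the tail $|M_n - \pi_N M_n|_{\mathbb{B}}$ is controlled uniformly in $n$ by inequality (\ref{assouad}) applied to the $\mathbb{B}$-valued martingale $(M_n - \pi_N M_n)$ together with $\| |D_0 - \pi_N D_0|_{\mathbb{B}} \|_2 \to 0$ as $N \to \infty$. The covariance operator $\Lambda_{\mathbb{B}}$ is identified through the Hannan expansion $X_0 = \sum_k P_k(X_0)$, yielding $\mathbb{E}(g(D_0) h(D_0)) = \sum_{k \in \mathbb{Z}} \mathrm{Cov}(g(X_0), h(X_k))$ for $g,h \in \mathbb{B}^*$. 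The main difficulty lies in this last step: in a 2-smooth Banach space endowed only with a Schauder basis (without Hilbertian orthogonality or an unconditional basis), the uniform tightness of the tail process $n^{-1/2} \max_{k \leq n} |M_k - \pi_N M_k|_{\mathbb{B}}$ requires a careful appeal to the smoothness inequality applied to the tail martingale.
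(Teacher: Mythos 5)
Your overall strategy is the same as the paper's: form the martingale differences $D_i=\sum_{j\in\mathbb Z}P_i(X_j)$, show $\max_{k\le n}|S_k-M_k|_{\mathbb B}=o_{\mathbb P}(\sqrt n)$, prove the invariance principle for the stationary ergodic martingale $(D_i)$ in the $2$-smooth space, and identify the covariance through the expansion $X_0=\sum_kP_k(X_0)$. The paper delegates the approximation step to Proposition 3.1 of \cite{DeMePene} and proves the martingale case by truncation at level $M$ plus Burkholder at order $q>2$ (citing \cite{Wo} for the finite-dimensional laws), whereas you prove the approximation by hand and handle the martingale FCLT via Schauder-basis projections $\pi_N$ with uniform tail control. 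Your FCLT route is sound: $(D_i-\pi_ND_i)$ is again a martingale difference sequence, so \eqref{assouad} combined with Doob (applied to the genuine martingale $M_k-\pi_NM_k$) gives $n^{-1}{\mathbb E}(\max_{k\le n}|M_k-\pi_NM_k|^2_{\mathbb B})\le 4K\,{\mathbb E}(|D_0-\pi_ND_0|^2_{\mathbb B})\to0$, and this is a legitimate alternative to the paper's truncation argument.

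There is, however, one step that fails as written: the claim that ${\mathbb E}(\max_{1\le k\le n}|S_k-M_k|^2_{\mathbb B})=o(n)$ ``follows by applying Doob's $\mathbb L^2$ inequality to each of the four martingale pieces.'' When you track the decomposition at each intermediate time $m$, three of the four pieces, namely $\sum_{j=1}^m{\mathbb E}_0(X_j)$, $\sum_{j=1}^m(X_j-{\mathbb E}_m(X_j))=\sum_{k>m}P_k(S_m)$ and $\sum_{i=1}^m\sum_{j>m}P_i(X_j)$, are \emph{not} martingales in the time index $m$: the first is a sum of martingale differences in the auxiliary index $k\le 0$ with $m$ only entering the summands, and in the other two the range of conditioning moves with $m$. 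Doob therefore controls only the piece $\sum_{i=1}^m\sum_{j\le0}P_i(X_j)$. Passing from the fixed-time bound $\|S_n-M_n\|_2=o(\sqrt n)$ (which your Pisier-plus-Ces\`aro argument does deliver, provided you split the index range rather than summing the squared tails, since $\sum_l(\sum_{m\ge l}\|P_0(X_m)\|_2)^2$ need not converge) to the maximal bound requires a genuine maximal inequality with dyadic chaining in $m$ — exactly the content of Proposition \ref{maxinequality} of the paper, or of Proposition 3.1 in \cite{DeMePene}, which is what the authors invoke at this point. You should either cite such a result or reproduce the dyadic argument; as it stands this step is a gap.
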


\begin{proof}[Proof of Proposition \ref{PIFBanach}]
Let us prove first that the result holds if ${\mathbb E}_{-1}(X_0)=0$
almost surely,
that is when $(X_k)_{k \in {\mathbb Z}}$ is a martingale difference sequence. As usual,
it suffices to prove that:
\begin{enumerate}
\item for any $0=t_0< t_1 < \cdots < t_d=1$
$$
  \frac{1}{\sqrt n}(S_{[nt_1]}, S_{[nt_2]}-S_{[nt_1]}, \cdots,
  S_{[nt_d]}-S_{[nt_{d-1}]})
$$
converges in distribution to  the Gaussian distribution $\mu$
on ${\mathbb B}^d$ defined by $\mu=\mu_{1} \otimes \mu_{2} \cdots \otimes
\mu_{d}$, where $\mu_{i}$ is the Gaussian distribution
on    ${\mathbb B}$    with
covariance operator $C_i$:
$$
\text{for any $g, h$ in ${\mathbb B}^*$,}  \quad
C_i(g,h)=  (t_i-t_{i-1}){\mathrm{Cov}}(g(X_0), h(X_0)) \, ;
$$
\item for any $\varepsilon>0$,
$$
\lim_{\delta \rightarrow 0} \limsup_{n \rightarrow \infty}
\frac 1 \delta
{\mathbb P}\Big(\max_{1 \leq k \leq [n\delta]} |S_k|_{\mathbb B} > \sqrt n
\varepsilon \Big)=0.
$$
\end{enumerate}
The first point can be proved exactly as in \cite{Wo}, who proved the result only
for $t_1=1$. Let us prove the second point. For any positive number $M$, let
$$
X_i'= X_i {\bf 1}_{|X_i|_{\mathbb B}\leq M} -
{\mathbb E}( X_i {\bf 1}_{|X_i|_{\mathbb B}\leq M}|{\mathcal F}_{i-1})
\quad \text{and} \quad X_i''= X_i-X_i'\, .
$$
Let also $S_{n}'ĩ= X_1'+ \cdots + X_n'$ and
$S_{n}''=X_1''+ \cdots + X_n''$. Since ${\mathbb B}$ is 2-smooth, Burkholder's inequality holds 
(see for instance \cite{Pinelis}), in such a way that
${\mathbb E}(\max_{1\le k\le n}|S_{k}'|_{\mathbb B}^q)\leq K_q M^q n^{q/2}$ for any $q\geq 2$.
Hence, applying Markov's inequality at order $q>2$,
$$
\frac 1 \delta {\mathbb P}\Big(\max_{1 \leq k \leq [n\delta]} |S_k'|_{\mathbb B} > \sqrt n
\varepsilon \Big) \leq
\frac{K_q M^q \delta^{(q-2)/2}}{\varepsilon^{q}}\, .
$$
As a consequence, we get 
\begin{equation}\label{tight1}
\lim_{\delta \rightarrow 0} \limsup_{n \rightarrow \infty}
\frac 1 \delta
{\mathbb P}\Big(\max_{1 \leq k \leq [n\delta]} |S'_k|_{\mathbb B} > \sqrt n
\varepsilon \Big)=0.
\end{equation}
In the same way, applying Markov's inequality at order $2$
\begin{equation}\label{tight2}
\frac 1 \delta
{\mathbb P}\Big(\max_{1 \leq k \leq [n\delta]} |S_k''|_{\mathbb B} > \sqrt n
\varepsilon \Big) \leq
\frac{K_2}{\varepsilon^2}{\mathbb E}(|X_0|_{\mathbb B}^2
{\bf 1}_{|X_0|_{\mathbb B}>M}) \, .
\end{equation}
The term ${\mathbb E}(|X_0|_{\mathbb B}^2
{\bf 1}_{|X_0|_{\mathbb B}>M})$ is as small as we wish by choosing $M$ large enough.
The point 2 follows from (\ref{tight1}) and (\ref{tight2}).

We now consider the general case. Since ${\mathbb B}$ is 2-smooth, Burkholder's
inequality holds and so Proposition 3.1
in \cite{DeMePene} (with $|\cdot|_{\mathbb B}$ instead of $|\cdot|_{\mathbb H}$)
applies: if (\ref{condp0}) holds, then, setting
$d_k= \sum_{i \in {\mathbb Z}} P_k(X_i)$, we have
\begin{equation}\label{ecart}
\Big \| \max_{1 \leq k \leq n } \Big  | \sum_{i=1}^k X_i - \sum_{i=1}^k d_i
\Big |_{\mathbb B} \Big \|_2 =o(\sqrt n).
\end{equation}
Since $(d_i)_{i \in {\mathbb Z}}$ is a stationary martingale differences sequence in ${\mathbb L}^2 ( {\mathbb B})$, we have 
just proved that it satisfies the conclusion of Proposition \ref{PIFBanach}.
From (\ref{ecart}) it follows that the conclusion of Proposition \ref{PIFBanach}
is also true for $(X_i)_{i \in {\mathbb Z}}$ with
$$
  \Lambda_{\mathbb B}(g,h)= {\mathrm {Cov}}(g(d_0),h(d_0)), \quad
  \text{for any $g, h$ in ${\mathbb B}^*$.}
$$

It remains to see that this covariance function can also be written as
in Proposition \ref{PIFBanach}.
Recall that since $\E_{- \infty} (X_0) = 0$ a.s. and  $X_0$ is ${\mathcal F}_{\infty}$-measurable, for any $g$ and $h$ in ${\mathbb B}^*$,                          
$$
\sum_{k \in {\mathbb Z}} |{\mathrm {Cov}}(g(X_0), h(X_k))| \leq
 \Big(\sum_{k \in {\mathbb Z}} \|P_0(g(X_k))\|_2 \Big) \Big(
 \sum_{k \in {\mathbb Z}} \|P_0(h(X_k))\|_2\Big)< \infty 
$$
(see the proof  of Theorem 3.1 in \cite{DeMePene}).
Hence, for any $g$ in ${\mathbb B}^*$,
\begin{equation}\label{covlim1}
\lim_{n \rightarrow \infty}
\frac 1 n {\mathbb E}\Big(\Big(\sum_{k=1}^n g(X_k)\Big)^2\Big)= \sum_{k \in {\mathbb Z}}
{\mathrm {Cov}}(g(X_0),g(X_k))\, .
\end{equation}
Now, from (\ref{ecart}), we also know that
\begin{equation}\label{covlim2}
\lim_{n \rightarrow \infty}
\frac 1 n {\mathbb E}\Big(\Big(\sum_{k=1}^n g(X_k)\Big)^2\Big)=
{\mathbb E}((g(d_0))^2)\, .
\end{equation}
Applying
(\ref{covlim1})
 and (\ref{covlim2}) with $g$, $h$ and $g+h$, we infer that
$$
{\mathrm {Cov}}(g(d_0),h(d_0))=\sum_{k \in {\mathbb Z}} {\mathrm{Cov}}(g(X_0), h(X_k))\, ,
$$
which completes the proof.
\end{proof}

\subsubsection{A Rosenthal inequality for non adapted sequences} \label{secRos}
We begin with a
maximal inequality that is useful to compare the moment of
order $p$ of the maximum of the partial sums of a non necessarily adapted process to the corresponding moment of the partial sum. The adapted version of this inequality  has been proven in the adapted case (that is when $X_0$ is ${\mathcal F}_0$-measurable)
in  \cite{MP}. Notice that Proposition 2 of \cite{MP} is stated for real valued random variables, but it holds also for variables taking values in a separable Banach space $({\mathbb B}, | \cdot |_{\mathbb B} )$. 
\begin{prop} \label{maxinequality} Let $p >1$ be a real number and $q$ be its conjugate exponent. Let $X_0$ be a random variable in ${\mathbb L}^p ({\mathbb B} ) $ and ${\mathcal F}_0 $ a $\sigma$-algebra satisfying ${\mathcal F}_0
\subseteq T^{-1 }({\mathcal F}_0)$. Then, for any integer $r$, the following inequality holds:
\begin{align} \label{inemaxna1}
\Big \Vert \max_{ 1 \leq m \leq 2^{r}} | S_m |_{\mathbb B}  \Big \Vert_p \leq q \Vert   S_{2^{r}%
}\Vert_p
 + q 2^{r/p} \sum_{\ell=0}^{r-1} 2^{-\ell/p}\Vert \bkE_0(S_{2^{\ell}})\Vert_{p}
 +  (q+1) 2^{r/p}\sum_{\ell=0}^{r} 2^{-\ell/p} \Vert S_{2^{\ell}} - {\mathbb{E}}_{2^{\ell}}(S_{2^{\ell}}%
) \Vert_{p}    \, .
\end{align}
\end{prop}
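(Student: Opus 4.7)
My plan is to follow and extend the dyadic decomposition strategy of \cite[Proposition 2]{MP}, which treats the adapted real-valued case, along two axes: (i) extending it to separable Banach space-valued random variables, which is essentially transparent because the argument of \cite{MP} relies only on Doob's maximal inequality (valid for ${\mathbb B}$-valued martingales), and (ii) extending it to non-adapted stationary sequences, which is where the genuinely new work lies.

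The guiding idea is the orthogonal splitting
$$
S_m \;=\; U_m + V_m, \qquad U_m := {\mathbb E}(S_m\mid {\mathcal F}_m), \qquad V_m := S_m - U_m,
$$
where $(U_m)$ is adapted to $({\mathcal F}_m)$ and $V_m$ satisfies ${\mathbb E}(V_m\mid {\mathcal F}_n)=0$ for every $n\leq m$. The triangle inequality then gives
$$
\Big\|\max_{1\leq m\leq 2^r}|S_m|_{\mathbb B}\Big\|_p \;\leq\; \Big\|\max_{1\leq m\leq 2^r}|U_m|_{\mathbb B}\Big\|_p + \Big\|\max_{1\leq m\leq 2^r}|V_m|_{\mathbb B}\Big\|_p,
$$
and the two terms on the right can be treated separately.

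For the adapted part, I would apply the Banach-valued version of \cite[Proposition 2]{MP} to $(U_m)$ to obtain
$$
\Big\|\max_{1\leq m\leq 2^r}|U_m|_{\mathbb B}\Big\|_p \;\leq\; q\|U_{2^r}\|_p + q\,2^{r/p}\sum_{\ell=0}^{r-1}2^{-\ell/p}\,\|{\mathbb E}_0(U_{2^\ell})\|_p.
$$
Jensen's inequality gives $\|U_{2^r}\|_p\leq \|S_{2^r}\|_p$, and the tower property together with ${\mathcal F}_0\subset {\mathcal F}_{2^\ell}$ yields ${\mathbb E}_0(U_{2^\ell})={\mathbb E}_0(S_{2^\ell})$, producing the first two terms of \eqref{inemaxna1}. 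For the residual, I would decompose each $m\in[1,2^r]$ along the binary expansion $m=\sum_{k=1}^r b_k 2^{r-k}$ with partial cumulants $m_k:=\sum_{j\leq k}b_j 2^{r-j}$ and use the identity
$$
V_{m_k} - V_{m_{k-1}} \;=\; \bigl[B_k-{\mathbb E}_{m_k}(B_k)\bigr] \,-\, {\mathbb E}_{m_k}(V_{m_{k-1}}), \qquad B_k:=S_{m_k}-S_{m_{k-1}},
$$
which follows from $U_{m_{k-1}}={\mathbb E}_{m_k}(U_{m_{k-1}})$. By stationarity, $\|B_k-{\mathbb E}_{m_k}(B_k)\|_p = \|V_{2^{r-k}}\|_p$, and ${\mathbb E}_{m_k}(V_{m_{k-1}})$ is controlled by conditional contractivity in terms of shifted copies of $V_{2^{r-k}}$. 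Taking the maximum over $m$ produces a union bound over the $2^{k-1}$ admissible dyadic positions at scale $k$ (contributing a factor $2^{(k-1)/p}$); reindexing $\ell=r-k$ and summing the geometric series gives the third term of \eqref{inemaxna1}, with constant $(q+1)$ (the $q$ arising from the Doob-type bound on the maxima of $({\mathbb E}_{m_k}(V_{m_{k-1}}))_k$, the extra $1$ from the triangle inequality applied to the block residuals).

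The main obstacle will be the residual step. Unlike $S_m$ or $U_m$, the process $V_m$ does not split additively over dyadic blocks because the conditioning operator ${\mathbb E}_m$ depends on the endpoint $m$. The careful unpacking of the telescoping identity above, so that every link of the chain is bounded in ${\mathbb L}^p$ by a shifted dyadic residual $V_{2^\ell}$, is the technical heart of the extension of \cite{MP} beyond the adapted setting; everything else is essentially bookkeeping.
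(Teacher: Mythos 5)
Your decomposition $S_m=U_m+V_m$ with $U_m=\E_m(S_m)$ is in substance the paper's starting point (the paper writes $S_k=S_k-\E_k(S_k)+\E_k(S_{2^r})-\E_k(S_{2^r}-S_k)$, whose last two terms are exactly your $U_k$), and your treatment of the adapted part is essentially the paper's, with one caveat: $(U_m)_m$ is \emph{not} the partial-sum process of a stationary adapted sequence, so you cannot invoke Proposition 2 of \cite{MP} as a black box applied ``to $(U_m)$''; you must re-run its proof, which does go through because $U_k$ is ${\mathcal F}_k$-measurable and $\E_k(U_{2^r})-U_k=\E_k(S_{2^r}-S_k)$ has the stationary-increment structure that the dyadic chaining needs. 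That is a presentational issue rather than a mathematical one.

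The genuine gap is in the residual step. Your identity $V_{m_k}-V_{m_{k-1}}=[B_k-\E_{m_k}(B_k)]-\E_{m_k}(V_{m_{k-1}})$ is correct, but the plan for the second family of terms fails as stated: $V_{m_{k-1}}=S_{m_{k-1}}-\E_{m_{k-1}}(S_{m_{k-1}})$ is the residual of the \emph{entire} history up to $m_{k-1}$, not a shifted copy of the single-block residual $V_{2^{r-k}}$, so conditional contractivity only gives $\|\E_{m_k}(V_{m_{k-1}})\|_p\le\|V_{m_{k-1}}\|_p$, which lacks the single-scale structure needed for the $2^{-\ell/p}$-weighted summation; moreover $(\E_{m_k}(V_{m_{k-1}}))_k$ is not a martingale in $k$ (both the conditioning $\sigma$-algebra and the conditioned variable move with $k$), so no Doob-type bound applies to its maximum. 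The repair is to collapse your recursion first: since the $\sigma$-algebras ${\mathcal F}_{m_k}$ are nested, $(I-\E_{m_r})\cdots(I-\E_{m_{k+1}})=I-\E_{m}$, and iterating your identity yields $V_m=\sum_{k}\bigl(B_k-\E_m(B_k)\bigr)$, i.e. all conditionings are taken with respect to the single terminal $\sigma$-algebra ${\mathcal F}_m$ — which is precisely the paper's decomposition. One then majorizes, for each scale $l$, the position-dependent block residual by $B_{r,l}:=\max_{1\le k\le 2^{r-l},\ k\ \text{odd}}|S_{k2^{l}}-S_{(k-1)2^{l}}-\E_{k2^{l}}(S_{k2^{l}}-S_{(k-1)2^{l}})|_{\mathbb B}$ \emph{before} conditioning, so that $m\mapsto\E(B_{r,l}|{\mathcal F}_m)$ is a genuine martingale to which Doob's inequality applies, giving the factor $q\Vert B_{r,l}\Vert_p$; the $\ell^p$ union bound $\Vert B_{r,l}\Vert_p\le 2^{(r-l)/p}\Vert S_{2^{l}}-\E_{2^{l}}(S_{2^{l}})\Vert_p$ then produces the third term of \eqref{inemaxna1} with constant $q+1$. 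Without this reordering (maximum over positions first, then Doob over $m$) the residual estimate does not close.
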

\begin{rqe} If we do not assume stationarity, so if we consider a sequence $(X_i)_{i \in {\mathbb Z}}$ in ${\mathbb L}^p ({\mathbb B} )$ for some $p>1$, and an increasing filtration $({\mathcal{F}}_{i})_{i \in {\mathbb Z}}$, our proof reveals that the following inequality holds true: for any integer $r$,
\begin{multline*}
\Big \Vert \max_{ 1 \leq m \leq 2^{r}} | S_m |_{\mathbb B}  \Big \Vert_p \leq q\Vert
S_{2^{r}}\Vert_{p}+q\sum_{l=0}^{r-1}\Big(\sum_{k=1}^{2^{r-l}-1}\Vert \E_{k2^{l}}
(S_{(k+1)2^{l}}-S_{k2^{l}} )\Vert_{p}^{p}\Big )^{1/p} \\
+(q +1) \sum_{l=0}^{r}\Big(\sum_{k=1}^{2^{r-l}}\Vert S_{k2^{l}}-S_{(k-1)2^{l}}-
 \E_{k2^{l}}(S_{k2^{l}}-S_{(k-1)2^{l}})\Vert_{p}^{p}\Big )^{1/p}\,.
\end{multline*}
\end{rqe}
\begin{rqe} \label{remarkinemax} Under the assumptions of Proposition \ref{maxinequality}, we also have that for any integer $n$,
\begin{align} \label{inemaxna2}
\Big \Vert \max_{ 1 \leq k \leq n} | S_k |_{\mathbb B}   \Big \Vert_p \leq 2 q
\max_{ 1 \leq k \leq n} \Vert S_k\Vert_p  +a_p n^{1/p}
\sum_{\ell = 1}^{n} \frac{\Vert{\mathbb E}_0 (S_{\ell} )  \Vert_p}{\ell^{1+1/p}} +
 b_pn^{1/p}\sum_{\ell = 1}^{2n} \frac{\Vert S_{\ell} - {\mathbb E}_{\ell} (S_{\ell} )  \Vert_p}{\ell^{1+1/p}}\, ,
\end{align}
where $$a_p = \frac{2^{1+ 1/p} q}{ 1 - 2^{-1-1/p} } \ \text{ and }
\ b_p= 2(q+1)\frac{2^{1+ 1/p}}
  { 1 - 2^{-1-1/p} }\, .$$
The proof of this remark will be done at the end of this section.
\end{rqe}

\bigskip

In the next results, we consider the case where $({\mathbb B}, |\cdot|_{\mathbb B})=
({\mathbb R}, |\cdot |)$. The next inequality is the non adapted version of the Rosenthal type inequality given in
\cite{MP} (see their Theorem 6).

\begin{thm}\label{Rosenthal1}
\label{directprop}Let $p >2$ be a real number and $q$ be its conjugate exponent.
Let $X_0$ be a real-valued random variable in ${\mathbb L}^p$ and ${\mathcal F}_0 $ a $\sigma$-algebra satisfying ${\mathcal F}_0
\subseteq T^{-1 }({\mathcal F}_0)$. 
Then, for any positive integer $r$, the following inequality
holds:
\begin{multline} \label{ineros}
{\mathbb{E}} \Big (\max_{1\leq j\leq 2^r}|S_{j}|^{p}\Big )\ll 2^r{\mathbb{E}
}(|X_{0}|)^{p}+2^r\left(  \sum_{k=0}^{r-1}\frac{\Vert{\mathbb{E}}
_{0}(S_{2^k})\Vert_{p}}{2^{k/p}}\right)^{p}+ 2^r\left(  \sum_{k=0}^{r}\frac{
      \Vert S_{2^k} - {\mathbb{E}}_{2^k}(S_{2^k})\Vert_{p}}{2^{k/p}}\right)^{p}   \\
 + 2^r\left(  \sum_{k=0}^{r-1} \frac{
     \Vert{\mathbb{E}}_{0}(S_{2^k}^{2})\Vert_{p/2}^{\delta}}{2^{2\delta k/ p}}\right)^{p/(2\delta)}\,,
\end{multline}
where $\delta=\min(1,1/(p-2))$.
\end{thm}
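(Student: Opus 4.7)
The plan is to deduce \eqref{ineros} from the adapted Rosenthal inequality (Theorem 6 of \cite{MP}) by combining the maximal inequality of Proposition \ref{maxinequality} with a decomposition $X_i = Y_i + Z_i$ into an adapted part $Y_i := \E_i(X_i)$ and a co-adapted residual $Z_i := X_i - \E_i(X_i)$.

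First, apply Proposition \ref{maxinequality} with $({\mathbb B},|\cdot|_{\mathbb B})=({\mathbb R},|\cdot|)$ and raise both sides to the $p$-th power, using $(a+b+c)^p \leq 3^{p-1}(a^p+b^p+c^p)$. The second and third sums on the right-hand side of \eqref{ineros} are produced at once with the correct $2^r$ prefactor, so it suffices to bound $\|S_{2^r}\|_p^p$ by the first, second and fourth terms appearing in \eqref{ineros}. For this, observe that $(Y_i)$ is a stationary $({\mathcal F}_i)$-adapted sequence, so Theorem 6 of \cite{MP} applies directly to $S^Y_{2^r}:=\sum_{i=1}^{2^r}Y_i$. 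Using $\E_0\circ\E_i=\E_0$ for $i\geq 0$, one has $\E_0(S^Y_{2^k})=\E_0(S_{2^k})$, so the coboundary norms coincide with those in the statement. For the conditional-variance term, expand $(S^Y_{2^k})^2=(S_{2^k}-S^Z_{2^k})^2$, apply Minkowski in $L^{p/2}$, and control the cross term $\|\E_0(S_{2^k}\,S^Z_{2^k})\|_{p/2}$ by the conditional Cauchy--Schwarz inequality combined with an a priori $L^p$-estimate on $S^Z_{2^k}$ supplied by the next step.

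To bound $\|S^Z_{2^r}\|_p$, use that $X_i\in{\mathcal F}_\infty$ to write the telescoping representation $Z_i=\sum_{j>i}P_j(X_i)$ with $P_j:=\E_j-\E_{j-1}$, and perform an Abel summation in $i$ to obtain
$$
S^Z_{2^r} \;=\; \bigl(S_{2^r}-\E_{2^r}(S_{2^r})\bigr) \;+\; \sum_{j=2}^{2^r} P_j(S_{j-1}).
$$
The first summand is controlled in $L^p$ by the "future'' quantity $\|S_{2^r}-\E_{2^r}(S_{2^r})\|_p$, already present on the right-hand side of \eqref{ineros}. The second is a martingale in $n$ with respect to $({\mathcal F}_n)$, so Burkholder's inequality turns its $L^p$ norm into the $L^p$ norm of its quadratic variation $\bigl(\sum_j P_j(S_{j-1})^2\bigr)^{1/2}$. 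A dyadic grouping of the partial sums $(S_{j-1})_{j\leq 2^r}$, together with stationarity and the identity $\E_k\bigl(P_j(S_{j-1})^2\bigr)$ for $k\leq j-1$, allows one to bound this quadratic variation by a combination of the coboundary norms $\|\E_0(S_{2^k})\|_p$ and the conditional-variance norms $\|\E_0(S_{2^k}^2)\|_{p/2}^{\delta}$ featuring in \eqref{ineros}, with the exponent $\delta=\min(1,1/(p-2))$ arising from an interpolation between the cases $p\le 4$ and $p>4$ exactly as in \cite{MP}.

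The main obstacle is the variance step. The bound on $\|\E_0((S^Y_{2^k})^2)\|_{p/2}$ in Step~2 depends on an $L^p$-bound for $S^Z_{2^k}$, while Step~3 itself outputs such a bound only after controlling a martingale square function whose natural estimate again brings in conditional variances of the full sums $S_{2^k}$. The two estimates must therefore be run in tandem, either by a bootstrap that uses the maximal inequality of Step~1 applied to $(Z_i)$ to provide a rough a priori control on $\|S^Z_{2^k}\|_p$, or by induction on $r$ in which the targeted inequality is proved for $\|S^Y\|_p^p$ and $\|S^Z\|_p^p$ simultaneously; this interlocking is the genuine difficulty of the argument.
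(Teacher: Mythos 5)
Your proposal does not follow the paper's route, and as written it contains a genuine gap that you yourself flag in your final paragraph. Applying the adapted Rosenthal inequality to $Y_i=\E_i(X_i)$ produces the conditional-variance term $\Vert\E_0((S^Y_{2^k})^2)\Vert_{p/2}$, and converting it into $\Vert\E_0(S_{2^k}^2)\Vert_{p/2}$ via your expansion of $(S_{2^k}-S^Z_{2^k})^2$ requires an $L^p$ bound on $S^Z_{2^k}$; meanwhile your Step 3 controls $S^Z_{2^r}$ by Burkholder through the square function $\bigl(\sum_j P_j(S_{j-1})^2\bigr)^{1/2}$, whose estimation in terms of $\Vert\E_0(S_{2^k})\Vert_p$ and $\Vert\E_0(S_{2^k}^2)\Vert_{p/2}^{\delta}$ is not a routine consequence of ``dyadic grouping and stationarity'' --- in \cite{MP} the analogous square-function analysis is the heart of the proof and is exactly where the exponent $\delta$ and logarithmic losses arise. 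Announcing that the two estimates ``must be run in tandem, either by a bootstrap or by a simultaneous induction'' names the circularity but does not resolve it: no recurrence inequality is written down, so there is nothing to close. A secondary point: the representation $Z_i=\sum_{j>i}P_j(X_i)$ presumes $X_0$ is ${\mathcal F}_\infty$-measurable, which is not a hypothesis of the theorem; this is patchable (e.g.\ via $\Vert S_n-\E_\infty(S_n)\Vert_p\le 2\Vert S_n-\E_n(S_n)\Vert_p$, Claim \ref{claimfacile}), but it must be addressed.

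The paper avoids the adapted/non-adapted splitting altogether. After the reduction via Proposition \ref{maxinequality}, it writes $S_{2n}=(S_n-\E_n(S_n))+\E_n(S_n)+\bar S_n$ with $\bar S_n=S_{2n}-S_n$ and proves the single recurrence $a_{2n}^p\le 2a_n^p+c_1a_n^{p-1}\bigl(\Vert\E_0(S_n)\Vert_p+\Vert S_n-\E_n(S_n)\Vert_p\bigr)+c_2a_n^{p-2\delta}\Vert\E_0(S_n^2)\Vert_{p/2}^\delta$ for $a_n=\Vert S_n\Vert_p$, using the algebraic inequality $(x+y)^p\le x^p+y^p+4^p(x^{p-1}y+xy^{p-1})$ and conditional H\"older/Jensen arguments split into the cases $2<p\le 3$, $3<p<4$ and $p\ge 4$. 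The non-adaptedness enters only through the single term $\Vert S_n-\E_n(S_n)\Vert_p$, which appears linearly, and the conditional variance is that of the full sum $S_n$, so no interlocking ever arises; a deterministic lemma (Lemma \ref{reclemma}) then converts the recurrence into \eqref{ineros}. If you want to salvage your plan, the cleanest fix is to abandon the $Y/Z$ splitting and set up such a dyadic recurrence for $\Vert S_{2n}\Vert_p$ directly.
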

\begin{rqe} \label{remarknodiadic}
The inequality in the above theorem implies that for any positive integer $n$,
\begin{multline*}
{\mathbb{E}} \Big (\max_{1\leq j\leq n}|S_{j}|^{p}\Big )\ll n{\mathbb{E}%
}(|X_{1}|)^{p}+n\left(  \sum_{k=1}^{n}\frac{1}{k^{1+1/p\ }}\Vert{\mathbb{E}}%
_{0}(S_{k})\Vert_{p}\right)^{p}+ n\left(  \sum_{k=1}^{2n}\frac{1}{k^{1+1/p\ }}\Vert S_k - {\mathbb{E}}%
_{k}(S_{k})\Vert_{p}\right)^{p} \\
 + n\left(  \sum_{k=1}^{n}\frac{1}{k^{1+2\delta
/p}}\Vert{\mathbb{E}}_{0}(S_{k}^{2})\Vert_{p/2}^{\delta}\right)
^{p/(2\delta)}\, .
\end{multline*}
\end{rqe}
To prove Remark \ref{remarknodiadic}, it suffices to use the arguments developed in the proof of Remark \ref{remarkinemax} together with the following additional subadditivity property: for any integers $i$ and $j$, and any $\delta \in]0,1]$: $$\Vert{\mathbb{E}}%
_{0}(S_{i+j}^{2})\Vert^{\delta}_{p/2}\leq 2^{\delta} \Vert{\mathbb{E}}_{0}(S_{i}^{2})\Vert
_{p/2}+2^{\delta}\Vert{\mathbb{E}}_{0}(S_{j}^{2})\Vert_{p/2}\, .$$ 
So, according to the first item of Lemma 37 of \cite{MP}, for any integer $n \in ]2^{r-1}, 2^r]$,
$$
\sum_{k=0}^{r-1} \frac{
     \Vert{\mathbb{E}}_{0}(S_{2^k}^{2})\Vert_{p/2}^{\delta}}{2^{2\delta k/ p}} \ll \sum_{k=1}^{n}\frac{1}{k^{1+2\delta
/p}}\Vert{\mathbb{E}}_{0}(S_{k}^{2})\Vert_{p/2}^{\delta} \, .
$$
\begin{rqe} \label{banachvalued}  
Theorem \ref{Rosenthal1} has been stated in the real case. Notice that if we assume $X_0$ to be  in ${\mathbb L}^p ({\mathbb B} ) $ where $({\mathbb B}, |\cdot|_{\mathbb B})$ is a separable Banach space and $p$ is a real number in $]2, \infty[$, then a Rosenthal-type inequality similar as (\ref{ineros}) can be obtained but with a different $\delta$ for $2 < p < 4$. To be more precise, we get 
\begin{equation} \label{rosenthalremark}
{\mathbb{E}} \Big (\max_{1\leq j\leq 2^r}|S_{j}|_{\mathbb B}^{p}\Big )\ll 2^r{\mathbb{E}
}(|X_{0}|_{\mathbb B})^{p}+ 2^r\left(  \sum_{k=0}^{r}\frac{
      \Vert S_{2^k} - {\mathbb{E}}_{2^k}(S_{2^k})\Vert_{p}}{2^{k/p}}\right)^{p}   
 + 2^r\left(  \sum_{k=0}^{r-1} \frac{
     \Vert{\mathbb{E}}_{0}( | S_{2^k} |_{\mathbb B}^{2})\Vert_{p/2}^{\delta}}{2^{2\delta k/ p}}\right)^{p/(2\delta)}\,,
\end{equation}
where $\delta=\min(1/2,1/(p-2))$. The proof of this inequality is given 
at the end of this section.
\end{rqe}
As a consequence of (\ref{ineros}), one can prove  the following proposition which will be a key tool to prove the tightness
of the sequential empirical process (\ref{seqempirical}) in the space
$\ell^\infty([0,1]\times {\mathbb R}^\ell)$ (see the proof of Theorem \ref{Kiefer},
Section \ref{lastsec}). 
\begin{prop}
\label{consdirect} Let $p>2$. Let $X_0$ be a real-valued random variable in ${\mathbb L}^p$ and ${\mathcal F}_0 $ a $\sigma$-algebra satisfying ${\mathcal F}_0
\subseteq T^{-1 }({\mathcal F}_0)$. For any $j\geq1$, let
\begin{equation}
A(X,j)=\max \Big ( 2 \sup_{i\geq
0}\Vert{\mathbb{E}}_{0}(X_{i}X_{j+i})\Vert
_{p/2}, \sup_{0 \leq i \leq j
}\Vert{\mathbb{E}}_{0}(X_{j}X_{j+i})-{\mathbb{E}}(X_{j}X_{j+i})\Vert
_{p/2} \Big )\,.\label{notalambda}%
\end{equation}
Then, for every positive integer $n$,
\begin{multline*}
\Big \Vert \max_{1\leq j\leq n} |S_{j}| \Big \Vert_{p}\ll n^{1/2}%
\Big (\text{ }\sum_{k=0}^{n-1}|{\mathbb{E}}(X_{0}X_{k})|\Big )^{1/2}%
+n^{1/p} \Vert X_{1} \Vert_{p}
+ n^{1/p}\sum_{k=1}^{n}\frac{1}{k^{1/p}}\Vert {\mathbb{E}}_{0}(X_{k}%
)\Vert_{p} \\ + n^{1/p}\sum_{k=1}^{2n}\frac{1}{k^{1/p}}\Vert X_0 - {\mathbb{E}}_{k}(X_{0}%
)\Vert_{p} + n^{1/p}\Big (\sum_{k=1}^{n}\frac{1}{k^{(2/p)-1}}(\log k)^{\gamma}
A(X, k)\Big )^{1/2}\,.
\end{multline*}
where $\gamma$ can be taken $\gamma=0$ for $2<p\leq3$ and $\gamma>p-3$ for
$p>3$. The constant that is
implicitly involved in the notation $\ll$ depends on $p$ and $\gamma$ but it
depends neither on $n$ nor on the $X_{i}$'s.
\end{prop}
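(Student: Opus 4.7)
The plan is to deduce Proposition \ref{consdirect} from Theorem \ref{Rosenthal1} in its non-dyadic form (Remark \ref{remarknodiadic}). Taking the $p$-th root of that inequality, one must bound each of the four summands on its right-hand side by the corresponding term in the statement (with $\delta=\min(1,1/(p-2))$); the term $n^{1/p}\mathbb{E}(|X_1|)\le n^{1/p}\|X_1\|_p$ is already in the desired form.

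\smallskip

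For the terms involving $\|\mathbb{E}_0(S_k)\|_p$ and $\|S_k-\mathbb{E}_k(S_k)\|_p$, use the triangle inequality together with the stationarity relation $\mathbb{E}(X_0\circ T^i|\mathcal{F}_k)=\mathbb{E}(X_0|\mathcal{F}_{k-i})\circ T^i$ to bound $\|\mathbb{E}_0(S_k)\|_p \le \sum_{i=1}^{k}\|\mathbb{E}_0(X_i)\|_p$ and $\|S_k-\mathbb{E}_k(S_k)\|_p \le \sum_{j=0}^{k-1}\|X_0-\mathbb{E}_j(X_0)\|_p$. Then exchange the order of summation (Fubini) and use $\sum_{k\ge i}k^{-1-1/p}\ll i^{-1/p}$ to recast these contributions exactly as the third and fourth terms of the statement.

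\smallskip

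The core of the argument is the control of $\bigl(\sum_{k=1}^{n}k^{-1-2\delta/p}\|\mathbb{E}_0(S_k^2)\|_{p/2}^{\delta}\bigr)^{1/(2\delta)}$. Decompose $\mathbb{E}_0(S_k^2)=\mathbb{E}(S_k^2)+(\mathbb{E}_0(S_k^2)-\mathbb{E}(S_k^2))$ so that $\|\mathbb{E}_0(S_k^2)\|_{p/2}^{\delta}\ll|\mathbb{E}(S_k^2)|^{\delta}+\|\mathbb{E}_0(S_k^2)-\mathbb{E}(S_k^2)\|_{p/2}^{\delta}$. For the deterministic piece, stationarity gives $|\mathbb{E}(S_k^2)|\le 2k\sum_{l=0}^{k-1}|\mathbb{E}(X_0X_l)|$; using $\sum_{k=1}^{n}k^{\delta(1-2/p)-1}\asymp n^{\delta(1-2/p)}$ and then taking the $1/(2\delta)$-th power and multiplying by $n^{1/p}$, the exponents collapse to $n^{1/2}$ and one recovers exactly the variance-like leading term $n^{1/2}\bigl(\sum_{k=0}^{n-1}|\mathbb{E}(X_0X_k)|\bigr)^{1/2}$.

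\smallskip

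For the centered piece, expand $\mathbb{E}_0(S_k^2)-\mathbb{E}(S_k^2)=\sum_{i,j=1}^{k}(\mathbb{E}_0(X_iX_j)-\mathbb{E}(X_iX_j))$ and, by symmetry, reduce to pairs $i\le j$. Split these pairs according to whether the gap $g=j-i$ satisfies $g\le i$ (near-diagonal) or $g>i$ (far-diagonal). In the near-diagonal regime, the second supremum in (\ref{notalambda}) directly gives $\|\mathbb{E}_0(X_iX_{i+g})-\mathbb{E}(X_iX_{i+g})\|_{p/2}\le A(X,g)$. In the far-diagonal regime, the first supremum yields $\|\mathbb{E}_0(X_iX_{i+g})\|_{p/2}\le A(X,g)/2$, while the deterministic correction $|\mathbb{E}(X_0X_g)|$ is reabsorbed into the variance-like term already treated. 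Re-summing over $(i,g)$, substituting back, and taking the $1/(2\delta)$-th power together with the $n^{1/p}$ factor yields the $A(X,\cdot)$-term in the statement.

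\smallskip

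The main obstacle is precisely the final step. When $p\le 3$ one has $\delta=1$, and linearity of the $L^{p/2}$-norm suffices. When $p>3$ however, $\delta=1/(p-2)<1$, the map $x\mapsto x^{\delta}$ is strictly concave, and one cannot distribute it freely across the summands. A dyadic blocking argument at level $L^{p/2}$, applied to the sums of the $\mathcal{F}_0$-measurable mean-zero variables $\mathbb{E}_0(X_iX_{i+m})-\mathbb{E}(X_iX_{i+m})$, is then required; each dyadic level contributes one logarithmic factor, and the threshold $\gamma>p-3$ is exactly what makes the resulting iterated sum converge after exponentiation by $1/(2\delta)$.
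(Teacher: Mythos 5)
Your overall route---deducing the proposition from Theorem \ref{Rosenthal1} via Remark \ref{remarknodiadic}, splitting $\mathbb{E}_0(S_k^2)$ into $\mathbb{E}(S_k^2)$ plus a centered part, and sorting the off-diagonal pairs according to whether the gap exceeds the smaller time index---is exactly the intended one (the paper defers to the proof of Proposition 20 of \cite{MP}, which proceeds this way), and your treatment of the first four terms of the bound is correct. There is, however, a concrete error in the near-diagonal step. The second supremum in (\ref{notalambda}) is $\sup_{0\le i\le j}\Vert\mathbb{E}_0(X_jX_{j+i})-\mathbb{E}(X_jX_{j+i})\Vert_{p/2}$: here $j$, the argument of $A$, is the \emph{smaller time index} of the pair and $i\le j$ is the gap. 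Hence for a near-diagonal pair $(X_i,X_{i+g})$ with $g\le i$ the bound it furnishes is $A(X,i)$, not $A(X,g)$ as you wrote. This is not cosmetic: with your indexing, for a fixed gap $g$ there are up to $k$ near-diagonal pairs inside $S_k^2$, so you only obtain $\Vert\mathbb{E}_0(S_k^2)-\mathbb{E}(S_k^2)\Vert_{p/2}\ll k\sum_{g\le k}A(X,g)+\cdots$, which after weighting and summing in $k$ produces the term $n^{1/2}\bigl(\sum_{k\le n}A(X,k)\bigr)^{1/2}$; this is in general strictly larger than the stated term $n^{1/p}\bigl(\sum_{k\le n}k^{1-2/p}(\log k)^{\gamma}A(X,k)\bigr)^{1/2}$, so the proposition does not follow. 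With the correct indexing, each regime contributes at most a constant times $\sum_{j\le k}jA(X,j)$ (for a fixed smaller index $j$ there are at most $j+1$ admissible gaps in the near-diagonal case, and for a fixed gap $j$ at most $j$ admissible smaller indices in the far-diagonal case), and then, for $2<p\le 3$ where $\delta=1$, the interchange $\sum_{k\ge j}k^{-1-2/p}\ll j^{-2/p}$ yields exactly the weight $j^{1-2/p}$ with $\gamma=0$.

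Second, the case $p>3$ is only gestured at. The triangle inequality in $\mathbb{L}^{p/2}$ still gives $\Vert\mathbb{E}_0(S_k^2)-\mathbb{E}(S_k^2)\Vert_{p/2}\ll\sum_{j\le k}jA(X,j)+k\sum_{m\le k}|\mathbb{E}(X_0X_m)|$ for every $p>2$; the real difficulty is converting $\bigl(\sum_{k\le n}k^{-1-2\delta/p}\bigl(\sum_{j\le k}jA(X,j)\bigr)^{\delta}\bigr)^{1/(2\delta)}$ into a bound of the form $\bigl(\sum_{j}j^{1-2/p}(\log j)^{\gamma}A(X,j)\bigr)^{1/2}$, i.e.\ trading the outer exponent $1/(2\delta)$ for $1/2$ when $\delta=1/(p-2)<1$. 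The mechanism is not that ``each dyadic level contributes one logarithmic factor'' but a H\"older inequality across dyadic blocks with weights $(\log)^{\gamma\delta}$ and $(\log)^{-\gamma\delta}$, whose conjugate sum converges precisely when $\gamma\delta/(1-\delta)>1$, that is $\gamma>1/\delta-1=p-3$. You identify the correct threshold, but this step---the one genuinely nontrivial point of the proof---is left as an assertion rather than carried out.
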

The proof of this proposition is left to the reader since it uses the same arguments as those developed for the proof of Proposition 20 in \cite{MP}.

\medskip    

We would like also to point out that Theorem \ref{directprop} implies the following Burkholder-type inequality. This has been already mentioned in the adapted case in \cite[Corollary 13]{MP}.

\begin{coro} \label{corburk}
Let $p>2$ be a real number, $X_0$ be a real-valued
 random variable in ${\mathbb L}^p$ and ${\mathcal F}_0 $ a $\sigma$-algebra satisfying ${\mathcal F}_0
\subseteq T^{-1 }({\mathcal F}_0)$. Then, for any integer $r$, the following inequality holds:
\[
{\mathbb{E}} \Big (\max_{1\leq j\leq 2^r}|S_{j}|^{p}\Big )\ll 2^{r p/2}{\mathbb{E}%
}(|X_{0}|^{p}) +2^{r p/2}\Big(\sum_{j=0}^{r-1}\frac{\Vert{\mathbb{E}}_{0}%
(S_{2^j})\Vert_{p}}{2^{j/2}}\Big)^{p} + 2^{r p/2}\Big (  \sum
_{j=1}^{r}\frac{\Vert S_{2^j} - {\mathbb{E}}%
_{2^j}(S_{2^j})\Vert_{p}}{2^{j/2}}\Big )
^{p}\,.
\]
\end{coro}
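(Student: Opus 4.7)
The plan is to derive Corollary~\ref{corburk} from Theorem~\ref{Rosenthal1} in two steps: an elementary conversion of the dyadic sums weighted by $2^{-k/p}$ into sums weighted by $2^{-k/2}$ for the three ``non-quadratic'' terms of Theorem~\ref{Rosenthal1}, followed by a recursive treatment of the fourth (quadratic) term by strong induction on~$r$. The argument mimics \cite[Corollary 13]{MP} in the adapted case; the only new ingredient is the non-adapted sum $B_r:=\sum_{j=1}^{r}\Vert S_{2^j}-{\mathbb E}_{2^j}(S_{2^j})\Vert_p/2^{j/2}$, which is handled symmetrically to the coboundary sum $A_r:=\sum_{j=0}^{r-1}\Vert{\mathbb E}_0(S_{2^j})\Vert_p/2^{j/2}$.

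For the first step, since $p>2$ the factor $2^{k(1/2-1/p)}$ is non-decreasing in $k$, so for any non-negative sequence $(x_k)$,
\[
2^{r}\Bigl(\sum_{k=0}^{r}\frac{x_k}{2^{k/p}}\Bigr)^{p}\leq 2^{rp/2}\Bigl(\sum_{k=0}^{r}\frac{x_k}{2^{k/2}}\Bigr)^p.
\]
Applied to the second and third terms of Theorem~\ref{Rosenthal1}, this produces $2^{rp/2}A_r^p$ and $2^{rp/2}B_r^p$; the $k=0$ contribution to the non-adapted sum, bounded by $\Vert X_1-{\mathbb E}_1(X_1)\Vert_p\leq 2\Vert X_0\Vert_p$, is absorbed into the main term $2^{rp/2}\Vert X_0\Vert_p^p$. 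The first term $2^r({\mathbb E}|X_0|)^p$ is also dominated by $2^{rp/2}\Vert X_0\Vert_p^p$ since $p\geq 2$.

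For the second step, assuming Corollary~\ref{corburk} for all $r'<r$ and using the monotonicity of the sums $A_\cdot,B_\cdot$, one obtains $\Vert S_{2^k}\Vert_p\ll 2^{k/2}(\Vert X_0\Vert_p+A_r+B_r)$ for every $k<r$. The conditional Jensen inequality $\Vert{\mathbb E}_0(S_{2^k}^2)\Vert_{p/2}\leq\Vert S_{2^k}\Vert_p^2$ then gives $\Vert{\mathbb E}_0(S_{2^k}^2)\Vert_{p/2}\ll 2^k(\Vert X_0\Vert_p^2+A_r^2+B_r^2)$. Raising to the power $\delta\leq 1$, summing the geometric series $\sum_{k=0}^{r-1} 2^{k\delta(p-2)/p}\ll 2^{r\delta(p-2)/p}$, then raising to the power $p/(2\delta)$ and multiplying by $2^r$, the fourth term of Theorem~\ref{Rosenthal1} is bounded by $2^{rp/2}(\Vert X_0\Vert_p^2+A_r^2+B_r^2)^{p/2}\ll 2^{rp/2}(\Vert X_0\Vert_p^p+A_r^p+B_r^p)$ using the convexity of $x\mapsto x^{p/2}$.

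The main obstacle is the closure of the induction: the multiplicative constant produced by the second step depends linearly on the inductive constant from $\Vert S_{2^k}\Vert_p\ll 2^{k/2}(\ldots)$, so a careful bookkeeping of the universal constants is required so that the effective self-dependence is strictly less than one. This is precisely the analysis carried out in the adapted setting in \cite[Corollary 13]{MP}, and the non-adapted extension proceeds by treating $B_r$ in complete analogy with $A_r$ throughout.
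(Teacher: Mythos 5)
Your first step (converting the $2^{-k/p}$ weights to $2^{-k/2}$ weights and absorbing the $j=0$ term of the non-adapted sum via $\Vert X_1-{\mathbb E}_1(X_1)\Vert_p\le 2\Vert X_0\Vert_p$) is fine. The genuine gap is in your treatment of the quadratic term. Bounding $\Vert{\mathbb E}_0(S_{2^k}^2)\Vert_{p/2}$ by $\Vert S_{2^k}\Vert_p^2$ via conditional Jensen and then invoking the corollary for $r'<r$ produces, after the geometric summation, a contribution of the form $K\,C\,2^{rp/2}\bigl(\Vert X_0\Vert_p^p+A_r^p+B_r^p\bigr)$, where $C$ is the inductive constant and $K$ is the (fixed, $\ge 1$) constant implicit in Theorem \ref{Rosenthal1}. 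The self-referential term thus enters with exponent exactly $1$ in $C$ and a multiplier $K$ that cannot be made smaller than $1$, so the inequality $C\ge C''+KC$ you would need to close the induction has no solution; no amount of bookkeeping fixes this, because the standard ``either/or'' trick requires the self-dependence to be genuinely sublinear. (The same circularity appears if you instead feed $\Vert{\mathbb E}_0(S_n^2)\Vert_{p/2}\le a_n^2$ back into the doubling recurrence (\ref{recurrence}): the coefficient of $a_n^p$ becomes $2+c_2>2$ and the dyadic iteration blows up.)

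The paper avoids this by never returning to $\Vert S_n\Vert_p$ at all. It sets $b_n=\Vert{\mathbb E}_0(S_n^2)\Vert_{p/2}$ and proves the \emph{separate} doubling recursion $b_{2n}\le 2b_n+2b_n^{1/2}\bigl(\Vert{\mathbb E}_0(S_n)\Vert_p+\Vert S_n-{\mathbb E}_n(S_n)\Vert_p\bigr)$, obtained from the algebraic identity $S_{2n}^2=S_n^2+\bar S_n^2+2{\mathbb E}_n(S_n)\bar S_n+2(S_n-{\mathbb E}_n(S_n))\bar S_n$ together with two applications of Cauchy--Schwarz. Here the linear coefficient is \emph{exactly} $2$, which is precisely absorbed when passing to $B_k=\max_{j\le k}2^{-j}b_{2^j}$, and the cross term is sublinear ($b_n^{1/2}$), so the either/or argument yields $2^{-k}b_{2^k}\le 2b_1+4\bigl(\sum_{j<k}2^{-j/2}(\Vert{\mathbb E}_0(S_{2^j})\Vert_p+\Vert S_{2^j}-{\mathbb E}_{2^j}(S_{2^j})\Vert_p)\bigr)^2$; substituting this into the quadratic term of Theorem \ref{Rosenthal1} gives the corollary directly, with no induction on the corollary itself. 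This recursion for $b_n$ is the missing ingredient in your argument.
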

The above corollary (up to constants) is then the non adapted version of \cite[Theorem 1]{PUW} when $p>2$.

We now give the proof of the results of this section.

\medskip

\begin{proof}[Proof of Proposition \ref{maxinequality}] For any $k \in \{1, \dots, 2^{r}\}$, we have 
$$
S_{k} = S_k - \bkE_k(S_k ) + \bkE_k(S_{2^{r}} )- \bkE_k(S_{2^{r}} - S_k )  \, .
$$
Consequently
\begin{multline}
\Big \Vert \max_{ 1 \leq k \leq 2^{r}} | S_k |_{{\mathbb B}} \Big \Vert_p  \leq
\Big \Vert \max_{1\leq k\leq2^{r}}|{\mathbb{E}}_k(S_{2^{r}
})|_{{\mathbb B}} \Big \Vert_p + \Big \Vert \max_{1\leq m\leq2^{r}-1}|{\mathbb{E}}_{2^{r}-m}
(S_{2^{r}}-S_{2^{r}-m})|_{{\mathbb B}} \Big \Vert_p  \\
+ \Vert S_{2^r} - \bkE_{2^r}(S_{2^r} )\Vert_p +
\Big \Vert \max_{1\leq m\leq 2^{r}-1}|S_m - \bkE_m(S_m )|_{{\mathbb B}} \Big \Vert_p \,. \label{dec1max}
\end{multline}
Following the proof of Proposition 2 in \cite{MP}, we get 
\begin{multline*} 
\Big \Vert \max_{1\leq k\leq2^{r}}|{\mathbb{E}}_k(S_{2^{r}
})|_{{\mathbb B}} \Big \Vert_p + \Big \Vert \max_{1\leq m\leq2^{r}-1}|{\mathbb{E}}_{2^{r}-m}
(S_{2^{r}}-S_{2^{r}-m})|_{{\mathbb B}} \Big \Vert_p   \\
 \leq q \, \Vert {\mathbb{E}}_{2^r}(S_{2^{r}}) \Vert_{p} + q\sum_{\ell=0}^{r-1}\Big(\sum_{k=1}^{2^{r-\ell}-1}\Vert {\mathbb{E}%
}_{k2^{\ell}}(S_{(k+1)2^{\ell}}-S_{k2^{\ell}}) \Vert_{p}^{p}\Big )^{1/p}\ \, .
\end{multline*}
So, by stationarity,
\begin{multline} \label{momentb1max}
\Big \Vert \max_{1\leq k\leq2^{r}}|{\mathbb{E}}_k(S_{2^{r}
})|_{{\mathbb B}} \Big \Vert_p + \Big \Vert \max_{1\leq m\leq2^{r}-1}|{\mathbb{E}}_{2^{r}-m}
(S_{2^{r}}-S_{2^{r}-m})|_{{\mathbb B}} \Big \Vert_p  \\
 \leq  q \, \Vert {\mathbb{E}}_{2^r}(S_{2^{r}}) \Vert_{p}  + q 2^{r/p} \sum_{\ell=0}^{r-1} 2^{-\ell/p}
    \Vert {\mathbb{E}}(S_{2^{\ell}}
|{\mathcal{F}}_{0})\Vert_{p} \, .
\end{multline}
We  now bound the last term in the right hand side of (\ref{dec1max}). For any $m \in \{1, \dots, 2^{r}-1\}$, we consider its binary expansion:
\begin{equation*} \label{binary}
m=\sum_{i=0}^{r-1}b_{i}(m)2^{i},\ \text{ where $b_{i}(m)=0$ or $b_{i}(m)=1$%
}\,.
\end{equation*}
Set $m_{l}=\sum_{i=l}^{r-1}b_{i}(m)2^{i}$, and write that 
\beq \label{decdyadic}
|S_m - {\mathbb E}_m(S_m )|_{{\mathbb B}}
\leq\sum_{l=0}^{r-1}|S_{m_l} - S_{m_{l+1}} -  {\mathbb{E}}_m(S_{m_l} - S_{m_{l+1}})|_{{\mathbb B}}\, ,
\eeq
since $S_0 = 0$ and $m_r=0$. Now, since for any $l=0, \dots, r-1$, ${\mathcal{F}}_{m_l}\subseteq  {\mathcal{F}}_{m}$, the following decomposition holds:
\begin{multline*}
|S_{m_l} - S_{m_{l+1}} -  {\mathbb{E}}_m(S_{m_l} - S_{m_{l+1}})|_{{\mathbb B}} \leq |S_{m_l} - S_{m_{l+1}} -  {\mathbb{E}}_{m_l}(S_{m_l} - S_{m_{l+1}})|_{{\mathbb B}} \\
 +  \big |{\mathbb{E}} \big ( S_{m_l} - S_{m_{l+1}} -  {\mathbb{E}}_{m_l}(S_{m_l} - S_{m_{l+1}})|{\mathcal{F}}_{m}) \big ) \big |_{{\mathbb B}} \, .
\end{multline*}
Notice that $m_{l}\neq m_{l+1}$ only if $m_{l}%
=k_{m,l}2^{l}$ with $k_{m,l}$ odd. Then, setting
$$
B_{r,l} = \max_{1\leq k\leq2^{r-l},k\text{ odd}}|S_{k2^{l}}-S_{(k-1)2^{l}} - {\mathbb{E}}_{k2^{l}}(S_{k2^{l}}-S_{(k-1)2^{l}} )|_{{\mathbb B}} \, ,
$$
it follows that
$$
|S_{m_l} - S_{m_{l+1}} -  {\mathbb{E}}_m(S_{m_l} - S_{m_{l+1}})|_{{\mathbb B}} \leq  B_{r,l} +  |{\mathbb{E}}  ( B_{r,l}|{\mathcal{F}}_{m}) | \, .
$$
Starting from (\ref{decdyadic}), we then get
$$
\Big \Vert \max_{1\leq m\leq 2^{r}-1}|S_m - \bkE_m(S_m )|_{{\mathbb B}} \Big \Vert_p
    \leq \sum_{l=0}^{r-1} \Vert B_{r,l} \Vert_p + \sum_{l=0}^{r-1}
   \Big \Vert \max_{1\leq m\leq 2^{r}-1}|{\mathbb{E}}  ( B_{r,l}|{\mathcal{F}}_{m}) |
  \Big \Vert_p \, .
$$
Since $({\mathbb{E}}(B_{r,l}
|{\mathcal{F}}_{m}))_{m \geq1}$ is a
martingale, by using  Doob's maximal inequality, we get 
\[
\Big \Vert \max_{1\leq m\leq 2^{r}-1}|{\mathbb{E}}  ( B_{r,l}|{\mathcal{F}}_{m}) |
   \Big \Vert_p \leq q   \Vert {\mathbb{E}}(B_{r,l}|{\mathcal{F}
}_{2^{r}-1}) \Vert_p \leq  q\Vert B_{r,l}\Vert_{p}\, ,
\]
yielding to
$$
\Big \Vert \max_{1\leq m\leq 2^{r}-1}|S_m - {\mathbb E}_m(S_m )|_{\mathbb B} \Big \Vert_p
  \leq (q+1) \sum_{l=0}^{r-1} \Vert B_{r,l} \Vert_p \, .
$$
Since
$$
B_{r,l} \le \Biggl ( \sum_{k=1}^{2^{r-l}-1}|S_{k2^{l}}-S_{(k-1)2^{l}} -
    {\mathbb{E}}_{k2^{l}}(S_{k2^{l}}-S_{(k-1)2^{l}} )|_{\mathbb B}^p \Biggr  )^{1/p} \, ,
$$
we derive that
\begin{multline*}
\Big \Vert \max_{1\leq m\leq 2^{r}-1}|S_m - \bkE_m(S_m )|_{\mathbb B} \Big \Vert_p \\
\leq (q+1) \sum_{l=0}^{r-1}\Big(\sum_{k=1}^{2^{r-l}-1}\Vert S_{k2^{l}}-S_{(k-1)2^{l}}-
 \E_{k2^{l}}(S_{k2^{l}}-S_{(k-1)2^{l}})\Vert_{p}^{p}\Big )^{1/p} \, .
\end{multline*}
So, by stationarity, 
\begin{equation} \label{momentb2max}
\Big \Vert \max_{1\leq m\leq 2^{r}-1}|S_m - {\mathbb E}_m(S_m )|_{\mathbb B}
  \Big \Vert_p \leq (q+1) 2^{r/p}  \sum_{l=0}^{r-1} 2^{-l/p}  \Vert S_{2^{l}}- {\mathbb{E}}_{2^{l}}(S_{2^{l}} ) \Vert_p \, .
\end{equation}
Starting from (\ref{dec1max}) and taking into account (\ref{momentb1max})
and (\ref{momentb2max}), the inequality (\ref{inemaxna1}) follows.

\end{proof}

\begin{proof}[Proof of Theorem \ref{Rosenthal1}]
Thanks to Proposition \ref{maxinequality}, it suffices to prove that the inequality (\ref{ineros}) is satisfied  for ${\mathbb{E}} \big (|S_{2^r}|^{p}\big )$ instead of ${\mathbb{E}} \big (\max_{1\leq j\leq 2^r}|S_{j}|^{p}\big )$. We shall use similar dyadic induction arguments as those developed in the proof of Theorem 6 in \cite{MP}. With the notation  $a_n = \Vert S_n \Vert_p$, we shall establish the following recurrence formula: for any
positive integer $n$ and any $p >2$,
\begin{equation}
a_{2n}^{p}\leq 2a_{n}^{p}+c_{1}a_{n}^{p-1} \big ( \Vert{\mathbb{E}}_{0}(S_{n}%
)\Vert_{p} + \Vert S_n - {\mathbb{E}}%
_{n}(S_{n})\Vert_{p}\big )+c_{2}a_{n}^{p-2\delta}\Vert{\mathbb{E}}_{0}(S_{n}^{2})\Vert
_{p/2}^{\delta}\, , \label{recurrence}%
\end{equation}
where $c_{1}$ and $c_{2}$ are positive constants depending only on $p$. Before
proving it, let us show that (\ref{recurrence}) implies our result. With this aim, we give the following lemma which is a slight modification of Lemma 11 in \cite{MP}.

\begin{lem}
\label{reclemma}Assume that for some $0<\delta\leq1$ the recurrence formula
(\ref{recurrence}) holds. Then, for any integer $r$, 
\begin{multline}
a_{2^{r}}^{p}\leq 2^{r}\Big (4a_{2^{0}}^{p}+\Big (2c_{1}\sum_{k=0}%
^{r-1}2^{-k/p}\Vert{\mathbb{E}}_{0}(S_{2^{k}})\Vert_{p}\Big )^{p} \\+ \Big (2c_{1}\sum_{k=0}%
^{r-1}2^{-k/p}\Vert S_{2^k } - {\mathbb{E}}_{2^k}(S_{2^{k}})\Vert_{p}\Big )^{p}
+\Big (2c_{2}\sum_{k=0}^{r-1}2^{-2k\delta/p\ }\Vert{\mathbb{E}}_{0}(S_{2^{k}%
}^{2})\Vert_{p/2}^{\delta}\Big )^{p/2\delta}\Big )\,. \label{boundpropdyatic2}%
\end{multline}
\end{lem}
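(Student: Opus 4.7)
The plan is to prove (\ref{boundpropdyatic2}) by induction on $r$, following the dyadic-induction template of Lemma 11 in \cite{MP} (which treats the analogous adapted case, where the non-adapted contribution $\|S_n-\E_n(S_n)\|_p$ is absent). The base case $r=0$ is immediate since the right-hand side is at least $4 a_{2^0}^p = 4 a_1^p$, while the left-hand side equals $a_1^p$.

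For the inductive step, I would work with the normalized quantity $y_r:= a_{2^r}/2^{r/p}$, so that $y_r^p=a_{2^r}^p/2^r$ and the target bound reads $y_r^p\leq 4 y_0^p + \sigma_r^p+\tau_r^p+\eta_r^{p/(2\delta)}$, where $\sigma_r$, $\tau_r$, $\eta_r$ denote the three partial sums appearing on the right-hand side of (\ref{boundpropdyatic2}). Dividing the recurrence (\ref{recurrence}) applied at $n=2^r$ by $2^{r+1}$ produces the clean form
$$y_{r+1}^p\leq y_r^p + \frac{c_1}{2}\,2^{-r/p}\,y_r^{p-1}\bigl(\|\E_0(S_{2^r})\|_p + \|S_{2^r}-\E_{2^r}(S_{2^r})\|_p\bigr) + \frac{c_2}{2}\,2^{-2r\delta/p}\,y_r^{p-2\delta}\,\|\E_0(S_{2^r}^2)\|_{p/2}^{\delta},$$
in which the weights $2^{-r/p}$ and $2^{-2r\delta/p}$ match exactly the coefficients defining $\sigma_r$, $\tau_r$ and $\eta_r$.

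Substituting the inductive hypothesis $y_r^p\leq \Phi_r:=4 y_0^p + \sigma_r^p+\tau_r^p+\eta_r^{p/(2\delta)}$ and applying Young's inequality with exponents $(p/(p-1),p)$ to the $y_r^{p-1}$ cross-term and $(p/(p-2\delta),p/(2\delta))$ to the $y_r^{p-2\delta}$ cross-term, each cross-term splits into a small multiple of $\Phi_r$ and a contribution proportional to either $(A_r+B_r)^p$ or $C_r^{p/(2\delta)}$, where $A_r,B_r,C_r$ are the three norms appearing in the recurrence. The prefactors $2c_1$ and $2c_2$ in $\sigma_r$, $\tau_r$, $\eta_r$ together with the leading $4$ on $y_0^p$ are calibrated so that, after choosing the Young parameters to make the total $\Phi_r$-coefficient on the right at most $1$, one obtains $\Phi_{r+1}\geq y_{r+1}^p$, closing the induction.

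The main obstacle will be the precise bookkeeping of the Young constants, made delicate by the fact that $\delta=\min(1,1/(p-2))$ changes regime at $p=3$: the argument must be performed uniformly in $p>2$, so that the coefficients of $y_r^p$ extracted from the two Young splittings add up to something strictly less than one regardless of whether $2\delta=2$ or $2\delta<2$. A secondary technical point is that when substituting the inductive bound one needs to raise it to fractional powers $(p-1)/p$ and $(p-2\delta)/p$, so the subadditivity inequality $(x+y+z+w)^{\alpha}\leq x^\alpha+y^\alpha+z^\alpha+w^\alpha$ for $\alpha\in(0,1)$ must be combined with Young's inequality to produce exactly the $\sigma_r^p$, $\tau_r^p$ and $\eta_r^{p/(2\delta)}$ scaling demanded by (\ref{boundpropdyatic2}).
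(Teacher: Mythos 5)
Your overall template (normalizing $y_r=a_{2^r}/2^{r/p}$ and following Lemma 11 of \cite{MP}) is the right one, and your normalized recurrence for $y_{r+1}^p$ is correct, but the inductive step as you describe it does not close, and no calibration of the constants can fix it. The obstruction is the following: after substituting $y_r^p\leq \Phi_r$, the first term already contributes $\Phi_r$ with coefficient exactly $1$, so any Young splitting of the cross-terms of the form $\Phi_r^{(p-1)/p}\cdot(\cdot)\leq \varepsilon\Phi_r+C_\varepsilon(\cdot)^p$ necessarily pushes the total $\Phi_r$-coefficient strictly above $1$. You would then need the increment $\Phi_{r+1}-\Phi_r=(\sigma_{r+1}^p-\sigma_r^p)+(\tau_{r+1}^p-\tau_r^p)+(\eta_{r+1}^{p/(2\delta)}-\eta_r^{p/(2\delta)})$ to absorb the excess $\varepsilon\Phi_r$, and it cannot in general: take for instance $a_1$ huge and all the conditional norms tiny, so that $\Phi_r$ is dominated by the constant term $4y_0^p$ while each increment on the right is arbitrarily small. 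In that regime the left-hand side of your inductive step exceeds the right-hand side, so the induction fails even though the lemma itself is true. The root cause is that the hypothesis $y_r^p\leq\Phi_r$ forgets which of the four terms actually dominates, and a per-step absorption compounds this loss geometrically over $r$ steps.

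The paper's proof (following Lemma 11 of \cite{MP}) avoids this by performing the absorption once, globally, rather than at each step. One first unrolls the recurrence completely, obtaining $a_{2^r}^p\leq 2^r\bigl(a_1^p+c_1\sum_{k<r}2^{-k-1}a_{2^k}^{p-1}\Vert\E_0(S_{2^k})\Vert_p+c_1\sum_{k<r}2^{-k-1}a_{2^k}^{p-1}\Vert S_{2^k}-\E_{2^k}(S_{2^k})\Vert_p+c_2\sum_{k<r}2^{-k-1}a_{2^k}^{p-2\delta}\Vert\E_0(S_{2^k}^2)\Vert_{p/2}^{\delta}\bigr)$. Then, introducing the running maximum $B_r=\max_{0\leq k\leq r}a_{2^k}^p/2^k$ and bounding $a_{2^k}^{p-1}\leq(2^kB_r)^{(p-1)/p}$ and $a_{2^k}^{p-2\delta}\leq(2^kB_r)^{(p-2\delta)/p}$, one gets $B_r\leq a_1^p+c_1B_r^{1-1/p}\Sigma_1+c_1B_r^{1-1/p}\Sigma_2+c_2B_r^{1-2\delta/p}\Sigma_3$ with the appropriate weighted sums $\Sigma_i$. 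Since at least one of the four terms on the right is at least $B_r/4$, one of the four alternatives $B_r\leq 4a_1^p$, $B_r^{1/p}\leq 4c_1\Sigma_1$, $B_r^{1/p}\leq 4c_1\Sigma_2$, $B_r^{2\delta/p}\leq 4c_2\Sigma_3$ holds, and each yields one of the four summands in \eqref{boundpropdyatic2} (the factor $2$ in $2c_1$, $2c_2$ coming from $4\times 2^{-1}$). I would encourage you to replace the per-step Young argument with this unroll-then-dominant-term scheme; your normalization and base case can be kept as is.
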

Let us prove the lemma. From inequality (\ref{recurrence}), by recurrence on
the first term, we obtain, for any positive integer $r$,
\begin{multline*}
a_{2^{r}}^{p}\leq 2^{r}\Big (a_{2^{0}}^{p}+ c_{1}\sum_{k=0}^{r-1}2^{-k-1}a_{2^{k}%
}^{p-1}\Vert{\mathbb{E}}_{0}(S_{2^{k}})\Vert_{p}+ c_1\sum_{k=0}^{r-1}2^{-k-1}a_{2^{k}%
}^{p-1}\Vert S_{2^{k} } - {\mathbb{E}}_{2^k}(S_{2^{k}})\Vert_{p} \\ + c_{2}\sum_{k=0}^{r-1}%
2^{-k-1}a_{2^{k}}^{p-2\delta}\Vert{\mathbb{E}}_{0}(S_{2^{k}}^{2})\Vert
_{p/2}^{\delta}\Big )\,. \label{recurrence2}%
\end{multline*}
With the notation $\displaystyle B_{r}=\max_{0\leq k\leq r}(a_{2^{k}}%
^{p}/2^{k}),$ it follows that
\begin{multline*}
B_{r}\leq a_{2^{0}}^{p}+c_{1}B_{r}^{1-1/p}\sum_{k=0}^{r-1}2^{-1-k/p}%
\Vert{\mathbb{E}}_{0}(S_{2^{k}})\Vert_{p}+c_{1}B_{r}^{1-1/p}\sum_{k=0}^{r-1}2^{-1-k/p}\Vert S_{2^{k} } - {\mathbb{E}}_{2^k}(S_{2^{k}})\Vert_{p} \\
+ c_{2}B_{r}^{1-2\delta/p}\sum
_{k=0}^{r-1}2^{-1-2k\delta/p\ }\Vert{\mathbb{E}}_{0}(S_{2^{k}}^{2})\Vert
_{p/2}^{\delta}\,.
\end{multline*}
Therefore, taking into account that either $B_{r}\leq 4 a_{2^{0}}^{p}$ or
$B_{r}^{1/p}\leq 4 c_{1}\sum_{k=0}^{r-1}2^{-1-k/p}\Vert{\mathbb{E}}_{0}(S_{2^{k}%
})\Vert_{p}$ or $B_{r}^{1/p}\leq 4 c_{1}\sum_{k=0}^{r-1}2^{-1-k/p}\Vert S_{2^{k} } - {\mathbb{E}}_{2^k}(S_{2^{k}})\Vert_{p}$ or $B_{r}^{2\delta/p}\leq 4c_{2}\sum_{k=0}^{r-1}2^{-1-2k\delta
/p\ }\Vert{\mathbb{E}}_{0}(S_{2^{k}}^{2})\Vert_{p/2}^{\delta}$, the inequality \eqref{boundpropdyatic2} follows. 

\smallskip

To end the proof of Theorem \ref{Rosenthal1}, it remains to prove \eqref{recurrence}. With this aim,  we denote by  $\bar{S}_{n}=X_{n+1}+\dots+X_{2n}$, and we write 
$$
S_{2n} = S_n -\E_n(S_n) + \E_n(S_n) + \bar{S}_{n} \,  .
$$
Recall now the following algebraic inequality: Let $x$ and $y$ be two positive real numbers and $p\geq1$ any real
number. Then
\begin{equation}
(x+y)^{p}\leq x^{p}+y^{p}+4^{p}(x^{p-1}y+xy^{p-1})  \label{comparaison}%
\end{equation}
(see Inequality (87) in \cite{MP}). The above inequality with $x=|\E_n(S_n) + \bar{S}_{n}|$ and $y= |S_n -\E_n(S_n)|$ gives
\begin{multline*}
a_{2n}^{p} \leq \Vert \E_n(S_n) + \bar{S}_{n} \Vert_p^p  + \Vert  S_n -\E_n(S_n) \Vert_p^p \\
 + 4^p   \E \big ( |\E_n(S_n) + \bar{S}_{n}|^{p-1} \times |S_n -\E_n(S_n)| \big )+4^p   \E \big ( |\E_n(S_n) + \bar{S}_{n}| \times  |S_n -\E_n(S_n)|^{p-1} \big ) \, .
\end{multline*}
Next using H\"older's inequality and stationarity, we derive that, for any $p \geq 2$,
\begin{equation}
 a_{2n}^{p} \leq \Vert \E_n(S_n) + \bar{S}_{n} \Vert_p^p
 + 2^{p-1}(1+2^{2p+1}) a_n^{p-1} \Vert S_n - \E_n(S_n) \Vert_p \, .
\label{recurrence2}
\end{equation}
Starting from (\ref{recurrence2}), (\ref{recurrence}) will follow if we can prove that  there exist two positive constants $c$ and $c_{2}$  depending only on $p$ such that
\begin{equation}
\Vert \E_n(S_n) + \bar{S}_{n} \Vert_p^p  \leq 2a_{n}^{p}+ c \, a_{n}^{p-1} \Vert{\mathbb{E}}_{0}(S_{n}%
)\Vert_{p} +c_{2}a_{n}^{p-2\delta}\Vert{\mathbb{E}}_{0}(S_{n}^{2})\Vert
_{p/2}^{\delta}\,.
\label{recurrence3}
\end{equation}
This inequality can be proven by following the lines of the end of the proof of Theorem 6 in \cite{MP} replacing in their proof $x=S_n$ by $x=\E_n (S_n)$. However, for reader's convenience we shall give the details. The proof is divided in three cases according to the values of $p$. 

Assume first that $2 < p \leq 3$. Inequality (85) in \cite{MP} applied with
$x=\E_n (S_n)$ and $y=\bar S_n$, gives
\[
|\E_n(S_n) + \bar{S}_{n}|^{p}\leq |\E_n(S_n)|^p + |\bar{S}_{n}|^{p} +p|\E_n (S_n)|^{p-1}\mathrm{sign}(\E_n (S_n))\bar{S}_{n}+ \frac{p(p-1)}{2}|\E_n(S_{n})|^{p-2}\bar{S}_{n}^{2}\, .
\]
But $ \E \big ( |\E_n(S_n)|^p \big ) \leq a_n^p$ and, by stationarity, $\E \big ( |\bar{S}_{n}|^{p} \big ) = a_n^p$. Moreover, H\"older's inequality combined with stationarity gives
$$
\E \big (|\E_n (S_n)|^{p-1}\mathrm{sign}(\E_n (S_n))\bar{S}_{n} \big ) = \E \big (|\E_n (S_n)|^{p-1}\mathrm{sign}(\E_n (S_n))\E_n ( \bar{S}_{n}) \big ) \leq a_n^{p-1} \Vert \E_0 ( S_n) \Vert_p \, ,
$$
and
$$
\E \big (|\E_n (S_n)|^{p-2}\bar{S}_{n}^2 \big ) = \E \big (|\E_n (S_n)|^{p-2} \E_n ( \bar{S}_{n}^2 ) \big ) \leq a_n^{p-2} \Vert \E_0 ( S^2_n) \Vert_{p/2} \, .
$$
So, overall, we get  
$$
\Vert \E_n(S_n) + \bar{S}_{n} \Vert_p^p  \leq 2 a_{n}^{p}+ p \, a_{n}^{p-1} \Vert{\mathbb{E}}_{0}(S_{n}%
)\Vert_{p} + \frac{p(p-1)}{2} a_{n}^{p-2} \Vert{\mathbb{E}}_{0}(S_{n}^{2})\Vert_{p/2} \, ,
$$
proving \eqref{recurrence3} with $\delta=1$, $c=p$ and $c_2 = p(p-1)/2$. 

Assume now that $p\in]3,4[$. Inequality (86) in \cite{MP} (applied with $x=\E_n (S_n)$ and $y=\bar{S}_{n}$)  together with stationarity lead to
\[
\Vert \E_n(S_n) + \bar{S}_{n} \Vert_p^p \leq 2a_{n}^{p}+p a_{n}^{p-1}\Vert{\mathbb{E}}_{0}(S_{n})\Vert
_{p}+\frac{p(p-1)}{2} a_{n}^{p-2}\Vert{\mathbb{E}}_{0}(S_{n}^{2})\Vert
_{p/2}+2p(p-2)^{-1} \E \big (|\E_n (S_n)|  |\bar{S}_{n} |^{p-1} \big ) \,.
\]
To handle the last term in the right-hand side, we notice that for any $p \geq 3$ and any positive random variables $Y_0$ and $Y_1$ such that $\E ( Y_0^p) \leq a ^p$ and $\E ( Y_1^p) \leq a ^p$,
\beq \label{basicine}
\E ( Y_0 Y_1^{p-1}) \leq a^{p-2/(p-2)} \Vert \E ( Y_1 | Y_0)  \Vert_{p/2}^{1/(p-2)}
\eeq
(see the proof of inequality (83) in \cite{MP}). Using stationarity and applying \eqref{basicine} with $Y_0=|\E_n (S_n)|$ and $Y_1=  |\bar{S}_{n} |$, we get, for any $p \geq 3$,
\beq \label{consbasicine}
\E \big (|\E_n (S_n)|  |\bar{S}_{n} |^{p-1} \big ) \leq a_n^{p-2/(p-2)} \Vert \E_0 ( S_n)  \Vert_{p/2}^{1/(p-2)} \, .
\eeq
So, overall, for any $p \in ]3, 4[$,
\begin{multline} \label{consbasicine*}
\Vert \E_n(S_n) + \bar{S}_{n} \Vert_p^p \leq 2a_{n}^{p}+p a_{n}^{p-1}\Vert{\mathbb{E}}_{0}(S_{n})\Vert
_{p}+\frac{p(p-1)}{2} a_{n}^{p-2}\Vert{\mathbb{E}}_{0}(S_{n}^{2})\Vert
_{p/2} \\ + 2p(p-2)^{-1} a_n^{p-2/(p-2)} \Vert \E_0 ( S_n)  \Vert_{p/2}^{1/(p-2)} \,.
\end{multline}
But, for $p \geq 3$, $\Vert{\mathbb{E}}_{0}(S_{n}^{2})\Vert_{p/2} \leq a_n^{2-2/(p-2)} \Vert{\mathbb{E}}_{0}(S_{n}^{2})\Vert
_{p/2}^{1/(p-2)}$ which together with \eqref{consbasicine*} show that \eqref{recurrence3} holds with $\delta=1/(p-2)$, $c=p$ and $c_2 = p(p-1)/2 + 2p/(p-2)$. 

It remains to prove the inequality (\ref{recurrence3}) for $p\geq4$. Inequality \eqref{comparaison} (applied with $x=\E_n (S_n)$ and $y=\bar{S}_{n}$) together with stationarity lead to
\beq \label{1p>4}
\Vert \E_n(S_n) + \bar{S}_{n} \Vert_p^p \leq 2a_{n}^{p}+4^{p} \E \big (|\E_n (S_n)|^{p-1}  |\bar{S}_{n} | \big ) + 4^p \E \big (|\E_n (S_n)|  |\bar{S}_{n} |^{p-1} \big )  \,.
\eeq
Notice that H\"older's inequality combined with stationarity entails that 
\[
\E \big (|\E_n (S_n)|^{p-1}  |\bar{S}_{n} | \big ) = \E \big (|\E_n (S_n)|^{p-1}  \E_n (  |\bar{S}_{n} | ) \big )  \leq a_n^{p-1} \Vert \E_0 (|{S}_{n} | ) \Vert_p \,.
\]
But, by Jensen's inequality, $\Vert \E_0 (|{S}_{n} | ) \Vert_p \leq \Vert \E_0 ({S}^2_{n}  ) \Vert^{1/2}_{p/2}$. Hence, since $p \geq 4$, by using stationarity, we derive that 
\beq \label{2p>4}
\E \big (|\E_n (S_n)|^{p-1}  |\bar{S}_{n} | \big )  \leq a_n^{p-1} \Vert \E_0 ({S}^2_{n}  ) \Vert^{1/2}_{p/2} \leq  a_n^{p-2/(p-2)} \Vert \E_0 ({S}^2_{n}  ) \Vert^{1/(p-2)}_{p/2}\,.
\eeq
Therefore, starting from \eqref{1p>4} and using the bounds \eqref{consbasicine} and \eqref{2p>4}, we get 
\[
\Vert \E_n(S_n) + \bar{S}_{n} \Vert_p^p \leq 2a_{n}^{p}+ 2^{2p+1}  a_n^{p-2/(p-2)} \Vert \E_0 ({S}^2_{n}  ) \Vert^{1/(p-2)}_{p/2}\, ,
\]
proving \eqref{recurrence3} with $\delta=1/(p-2)$, $c=0$ and $c_2 = 2^{2p+1}$.
\end{proof}
\begin{proof}[Proof of Remark \ref{banachvalued}] As it is pointed out in the proof of Theorem \ref{Rosenthal1}, the remark will be proven with the help of Proposition \ref{maxinequality}, if we can show that 
\begin{equation*}
a_{2n}^{p}\leq 2a_{n}^{p}+c_{1}a_{n}^{p-1} \Vert S_n - {\mathbb{E}}%
_{n}(S_{n})\Vert_{p} +c_{2}a_{n}^{p-2\delta}\Vert{\mathbb{E}}_{0}( |S_{n}|_{\mathbb B}^{2})\Vert
_{p/2}^{\delta}\, , \label{recurrenceremark}%
\end{equation*}
where $a^p_n = \mathbb E (| S_n |_{\mathbb B}^p)$,  $c_{1}$ and $c_{2}$ are positive constants depending only on $p$ and $\delta =\min (1/2 , 1/(p-2))$. Indeed, the second term in the right-hand side of \eqref{inemaxna1} can be bounded by the last term in the right-hand side of \eqref{rosenthalremark}. To see this it suffices to use Jensen's inequality and the fact that $\delta\leq 1/2$. 

Starting from \eqref{recurrence2} (by replacing the absolute values by the norm $| \cdot |_{\mathbb B}$), we see that to prove the above recurrence formula it suffices to show that there exists a positive constant $c$  depending only on $p$ such that
$$
\Vert \E_n(S_n) + \bar{S}_{n} \Vert_p^p  \leq 2a_{n}^{p} +c a_{n}^{p-2\delta}\Vert{\mathbb{E}}_{0}(|S_{n}|_{\mathbb B}^{2})\Vert
_{p/2}^{\delta}\,.
$$
The difference at this step with the proof of Theorem \ref{Rosenthal1} is that the inequality \eqref{comparaison} is used whatever $p >2$ (in the case of real-valued random variables, we have used more precise inequalities  when $p\in ]2,4[$).
\end{proof}

\begin{proof}[Proof of Corollary \ref{corburk}] To prove the corollary, it suffices to show that 
for any $0<\delta\leq1$ and
any real $p>2$,
\begin{equation} \label{R1burk}
 2^r\left(  \sum_{k=0}^{r-1} \frac{
     \Vert{\mathbb{E}}_{0}(S_{2^k}^{2})\Vert_{p/2}^{\delta}}{2^{2\delta k/ p}}\right)^{p/(2\delta)}\ll 2^{r p/2}%
\Vert{\mathbb{E}}_{0}(X_{1}^{2})\Vert_{p/2}^{p/2}+2^{r p/2}\Big(\sum_{j=0}^{r-1}\frac{\Vert{\mathbb{E}}_{0}%
(S_{2^j})\Vert_{p}+\Vert S_{2^j}-{\mathbb E}_{2^j}(S_{2^j}) \Vert_p}{2^{j/2}}\Big)^{p} \, ,
\end{equation}
and to apply Theorem \ref{Rosenthal1}. 

To prove \eqref{R1burk}, we shall use similar arguments as those developed in the proof of Lemma 12 in \cite{MP}. Setting  $b_{n}=\Vert{\mathbb{E}}_{0}(S_{n}^{2})\Vert_{p/2}$, assume that we can prove  that, for any integer $n$,
\begin{equation} \label{R2burk}
b_{2n}\leq 2b_{n}+2b_{n}^{1/2}(\Vert{\mathbb{E}}_{0}(S_{n})\Vert_{p} + \Vert S_n - {\mathbb{E}}_{n}(S_{n})\Vert_{p})\,.
\end{equation}
Then, by recurrence on the first term, the above inequality will entail that for any positive integer $k$, 
\[
b_{2^{k}}\leq 2^{k}b_{1}+\sum_{j=0}^{k-1}2^{k-j}b_{2^{j}}^{1/2} \big ( \Vert
{\mathbb{E}}_{0}(S_{2^{j}})\Vert_{p} +\Vert S_{2^{j}}-
{\mathbb{E}}_{2^j}(S_{2^{j}})\Vert_{p}  \big ) \,.
\]
Next, with the notation $B_{k}=\max_{0\leq j\leq k}2^{-j}b_{2^{j}}$, it will follow that 
\[
B_{k}\leq2 \max \Big( b_{1},B_{k}^{1/2}\sum_{j=0}^{k-1}2^{-j/2}\big ( \Vert
{\mathbb{E}}_{0}(S_{2^{j}})\Vert_{p} +\Vert S_{2^{j}}-
{\mathbb{E}}_{2^j}(S_{2^{j}})\Vert_{p}  \big ) \Big )\,,
\]
implying that
\begin{equation*} \label{R2burkbis*}
2^{-k}b_{2^{k}}\leq B_{k}\leq 2b_{1 }+2^{2}\Big (\sum_{j=0}^{k-1}2^{-j/2}%
\big ( \Vert
{\mathbb{E}}_{0}(S_{2^{j}})\Vert_{p} +\Vert S_{2^{j}}-
{\mathbb{E}}_{2^j}(S_{2^{j}})\Vert_{p}  \big ) \Big)^{2} \, .
\end{equation*}
Since the above inequality clearly entails \eqref{R1burk}, to prove the corollary it then suffices to prove \eqref{R2burk}. With this aim, by using the notation $\bar{S}_{n}=X_{n+1}+\dots+X_{n}$, we first write that 
$S_{2n}^{2}=S_{n}^{2}+\bar{S}_{n}^{2}+2 \E_n (S_{n})\bar{S}_{n} + 2 ( S_n - \E_n (S_{n}))\bar{S}_{n}$. Hence, by
stationarity, 
\[
b_{2n}\leq 2 b_n + 2\Vert{\mathbb{E}}_{0}\big (\E_n (S_{n}) {\mathbb{E}}_{n}%
(\bar{S}_{n})\big)\Vert_{p/2} + 2\Vert{\mathbb{E}}_{0}\big ( (S_n - \E_n (S_{n})) \bar{S}_{n}\big)\Vert_{p/2}\,.
\]
Therefore the inequality \eqref{R2burk} follows from the following upper bounds: applying  Cauchy-Schwarz inequality twice and using stationarity, we get  
\begin{align*}
\Vert{\mathbb{E}}_{0}\big (\E_n( S_{n}) {\mathbb{E}}_{n}(\bar{S}_{n})\big)\Vert
_{p/2}
& \leq  \Vert {\mathbb{E}}_{0}
({\mathbb{E}}_{n}^{2}({S}_{n}))\Vert^{1/2}_{p/2} \times  \Vert {\mathbb{E}}_{0}%
({\mathbb{E}}_{n}^{2}(\bar{S}_{n}))\Vert^{1/2}_{p/2}  \\
& \leq  \Vert {\mathbb{E}}_{0}({S}^2_{n}))\Vert^{1/2}_{p/2} \times  \Vert
{\mathbb{E}}^2_{n}(\bar{S}_{n})\Vert^{1/2}_{p/2} \leq b_n^{1/2}\Vert{\mathbb{E}}_{0}(S_{n})\Vert_{p}\, ,
\end{align*}
and 
\begin{equation*}
\Vert{\mathbb{E}}_{0}\big ( (S_n - \E_n (S_{n}))
\bar{S}_{n}\big)\Vert_{p/2}\leq\Vert{\mathbb{E}}_{0} (( (S_{n} - \E_n (S_n) )^{2}) \Vert_{p/2 }^{1/2} \Vert {\mathbb{E}}_{0}( 
\bar{S}^2_{n})\Vert_{p/2}^{1/2} \\
\leq b_{n}^{1/2}\Vert S_n - {\mathbb{E}}_{n}(S_{n})\Vert_{p}\,.
\end{equation*}
\end{proof}

\begin{proof}[Proof of Remark \ref{remarkinemax}] Let $n$ and $r$ be integers such that $2^{r-1}\le n<  2^r$. Notice first that
\beq \label{inemaxna2p1} \Big \Vert \max_{1\le k\le n}|S_k|_{\mathbb B}\Big \Vert_p
\le\big \Vert \max_{1\le k\le 2^r}|S_m|_{\mathbb B}
  \Big \Vert_p \, \text{ and }\Vert S_{2^r}\Vert_p\le 2\Vert S_{2^{r-1}}\Vert_p\le 2\max_{1\le k\le n}\Vert S_k\Vert_p \,
  \eeq
(for the second inequality we use the stationarity). Now,
setting $V_m=\Vert{\mathbb E}_0 (S_m )  \Vert_p$, we have by stationarity
that for all $n,m \geq 0$, $V_{n+m} \leq V_n + V_m$ and then, according to the first item of Lemma 37 of \cite{MP},
\begin{align} \label{inemaxna2p2} 2^{r/p}\sum_{\ell=0}^{r-1}2^{-\ell /p}
    \Vert{\mathbb E}_0(S_{2^\ell})\Vert_p & \le
      n^{1/p}\frac{2^{1/p}2^{2+1/p}}{2^{1+1/p}-1}\sum_{k=1}^n
   \frac{\Vert {\mathbb E}_0(S_k)\Vert_p}{k^{1+1/p}} \nonumber
    \\ & \leq  n^{1/p}\frac{2^{1+1/p}}{1-2^{-1/p-1}}\sum_{k=1}^n
   \frac{\Vert {\mathbb E}_0(S_k)\Vert_p}{k^{1+1/p}} \, .
   \end{align}
On an other hand, let $W_m = \Vert S_{m} - {\mathbb E}_m (S_{m} ) \Vert_p $, and note that the following claim is valid:
 \begin{claim} \label{claimfacile}
If ${\mathcal F}$ and
${\mathcal G}$ are $\sigma$-algebras such that ${\mathcal G }\subset {\mathcal F}$, then
for any $X$ in ${\mathbb L}^p ({\mathbb B} )$ where $p\geq 1$, $\Vert X - {\mathbb E}(X | {\mathcal F}) \Vert_p \leq
2 \Vert X - {\mathbb E}(X | {\mathcal G}) \Vert_p$.
\end{claim}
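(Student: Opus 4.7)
The plan is to use the triangle inequality together with the contractivity of conditional expectation on $\mathbb L^p(\mathbb B)$. First I would write
\begin{equation*}
\Vert X - \mathbb E(X|\mathcal F)\Vert_p \leq \Vert X - \mathbb E(X|\mathcal G)\Vert_p + \Vert \mathbb E(X|\mathcal F) - \mathbb E(X|\mathcal G)\Vert_p,
\end{equation*}
so the only task is to bound the second term on the right.

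The key observation is that since $\mathcal G \subset \mathcal F$, the $\mathcal G$-measurable random variable $\mathbb E(X|\mathcal G)$ is also $\mathcal F$-measurable, hence
\begin{equation*}
\mathbb E(X|\mathcal F) - \mathbb E(X|\mathcal G) = \mathbb E\bigl(X - \mathbb E(X|\mathcal G) \,\big|\, \mathcal F\bigr).
\end{equation*}
Applying the conditional Jensen inequality (equivalently, the fact that $Y \mapsto \mathbb E(Y|\mathcal F)$ is a contraction on $\mathbb L^p(\mathbb B)$ for any $p \geq 1$, which holds for Banach-space-valued variables with a separable $\mathbb B$) yields
\begin{equation*}
\Vert \mathbb E(X|\mathcal F) - \mathbb E(X|\mathcal G)\Vert_p \leq \Vert X - \mathbb E(X|\mathcal G)\Vert_p.
\end{equation*}
Substituting this back into the triangle-inequality bound immediately gives the factor $2$ claimed.

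There is no genuine obstacle here; the only point requiring mild care is the contractivity of conditional expectation in the Banach-valued setting, which is standard for separable $\mathbb B$ (and is the reason the statement is phrased for $X \in \mathbb L^p(\mathbb B)$ with $p \geq 1$ rather than just real-valued $X$).
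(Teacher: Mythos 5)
Your proof is correct: the triangle inequality combined with the identity $\mathbb{E}(X|\mathcal F)-\mathbb{E}(X|\mathcal G)=\mathbb{E}\big(X-\mathbb{E}(X|\mathcal G)\,\big|\,\mathcal F\big)$ (valid since $\mathcal G\subset\mathcal F$) and the $\mathbb{L}^p(\mathbb B)$-contractivity of conditional expectation gives exactly the factor $2$. The paper states this claim without proof, and your argument is the standard one the authors evidently have in mind, including the correct remark that contractivity extends to separable Banach-space-valued variables.
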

The above claim together with the stationarity imply that for all
$n,m \geq 0$, $W_{n+m} \leq 2( W_n + W_m)$. Therefore, using once again the first item of
Lemma 37 of \cite{MP}, we get 
\begin{equation}  \label{inemaxna2p3}  2^{r/p}\sum_{\ell=0}^{r} 2^{-\ell/p} \Vert S_{2^{\ell}} - {\mathbb{E}}_{2^{\ell}}(S_{2^{\ell}}%
) \Vert_{p} \le 2 n^{1/p}\frac{2^{1+ 1/p}}
  { 1 - 2^{-1/p -1} }\sum_{\ell = 1}^{2n} \frac{\Vert S_{\ell} -
{\mathbb E}_{\ell} (S_{\ell} )  \Vert_p}{\ell^{1+1/p}} \, .
\end{equation}
The inequality \eqref{inemaxna2} then follows from  the inequality (\ref{inemaxna1}) by taking into account the upper bounds \eqref{inemaxna2p1}, \eqref{inemaxna2p2} and \eqref{inemaxna2p3}.
\end{proof}

\subsection{A tightness criterion}

We begin with the definition of the number of brackets of a family of functions.

\begin{defi}\label{defnb}
Let $P$ be a probability measure on a measurable space ${\mathcal X}$.
For any measurable function $f$ from ${\mathcal X}$ to ${\mathbb
R}$, let $\|f\|_{P, 1}=P(|f|)$. If $\|f\|_{P, 1}$ is finite,
one says that $f$ belongs to $L_P^1$. Let ${\mathcal F}$ be some subset
of $L_P^1$. The number of brackets ${\mathcal N}_{P,1}(\varepsilon,
{\mathcal F})$ is the smallest integer $N$ for which there exist some
functions $f_1^-\leq f_1, \ldots , f_N^-\leq f_N$ in ${\mathcal F}$ such
that: for any integer $1\leq i \leq N$ we have
$\|f_{i}-f_i^-\|_{P,1} \leq \varepsilon$, and for any function $f$
in ${\mathcal F}$ there exists an integer $1\leq i \leq N$ such that
$f_i^- \leq f \leq f_{i}$.
\end{defi}

Proposition \ref{p2} below gives a general tightness criterion for empirical processes. Its proof is based on a decomposition given in  \cite{AP} (see also \cite{DePr}). Under the setting and conditions of  Theorem \ref{Kiefer}, the criterion \eqref{squinousfaut} will be shown to hold with the help of Proposition \ref{consdirect} (see the proof of Theorem \ref{Kiefer} in Section \ref{lastsec}).
\begin{prop}\label{p2}
Let $(X_i)_{i \geq 1}$ be a  sequence of identically distributed random variables
with values in a measurable space ${\mathcal X}$,  with common distribution $P$. Let $P_n$
be the empirical measure
$P_n=n^{-1} \sum_{i=1}^n \delta_{X_i}$, and let $S_n$ be the  empirical
process $S_n=n (P_n-P)$.
 Let
${\mathcal F}$ be a class of functions from ${\mathcal X}$ to ${\mathbb R}$ and
${\mathcal G}=\{f-l, (f,l) \in {\mathcal F}\times {\mathcal F} \}$.
Assume that there exist  $r \geq 2$,   $p >2$  and $C >0$ such that for any function $g$ of ${\mathcal G}\cup {\mathcal F}$ and any positive integer $n$,  we have
\begin{equation}\label{squinousfaut}
  \Big  \|\max_{1 \leq k \leq n} |S_k(g)| \Big \|_p \leq C( \sqrt n \|g\|^{1/r}_{P,1} + n^{1/p}) \, ,
\end{equation}
where $S_k(g):=\sum_{i=1}^k(g(X_i)-P(g))$. If moreover
$$
\int_0^1 x^{(1-r)/r} ({\mathcal N}_{P,1}(x, {\mathcal F}))^{1/p} dx < \infty
\quad and  \quad \lim_{x \rightarrow 0} x^{p-2}  {\mathcal N}_{P,1}(x, {\mathcal F}) =0 \, ,
$$
then
\begin{align}\label{equiibis}
&\lim_{\delta \rightarrow 0}
   \limsup_{n \rightarrow \infty} {\mathbb E}\Bigl( \max_{1\leq k \leq n}
    \sup_{g \in {\mathcal G}, \|g\|_{P,1} \leq \delta} n^{-p/2}|S_k(g)|^p \Bigr)=0 \,  ,\\
    \label{equiiter}
and \quad
& \lim_{\delta \rightarrow 0} \limsup_{n \rightarrow \infty} \frac 1 \delta
{\mathbb E}\Bigl( \max_{1\leq k \leq [n\delta]}
    \sup_{f \in {\mathcal F}} n^{-p/2}|S_k(f)|^p \Bigr)=0 \, .
\end{align}
\end{prop}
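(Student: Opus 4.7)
The plan is to apply a bracketing chaining argument in the spirit of Andrews--Pollard \cite{AP}. Fix dyadic scales $\epsilon_j = 2^{-j}$, set $\mathcal N_j = \mathcal N_{P,1}(\epsilon_j, \mathcal F)$, and for each $f \in \mathcal F$ and each $j \ge 0$ select a bracket $[f_j^-, f_j^+]$ at scale $\epsilon_j$ containing $f$. Put $\pi_j f = f_j^-$, so that $\|f - \pi_j f\|_{P,1} \le \epsilon_j$ and the cardinality of $\{\pi_j f : f \in \mathcal F\}$ is at most $\mathcal N_j$. Arranging the brackets to be nested, the telescoping identity
\[
f - \pi_{j_0}f = \sum_{j=j_0}^{J-1}(\pi_{j+1}f - \pi_j f) + (f - \pi_J f)
\]
induces a corresponding splitting of $S_k(f) - S_k(\pi_{j_0}f)$ into a chain of increments plus a small residual.

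At level $j$, the increment $\pi_{j+1}f - \pi_j f$ ranges over a finite family of cardinality $O(\mathcal N_{j+1})$ (thanks to the nested brackets, once $\pi_{j+1}f$ is fixed so is $\pi_j f$) and has $L^1(P)$-norm bounded by $3\epsilon_j$. Applying hypothesis \eqref{squinousfaut} to each such increment and using $\bigl\|\max_{h \in \mathcal H}|Y_h|\bigr\|_p \le |\mathcal H|^{1/p}\max_h\|Y_h\|_p$ yields
\[
\Bigl\|\max_{1 \le k \le n}\sup_{f \in \mathcal F}|S_k(\pi_{j+1}f - \pi_j f)|\Bigr\|_p \ll \mathcal N_{j+1}^{1/p}\bigl(\sqrt n\,\epsilon_j^{1/r} + n^{1/p}\bigr).
\]
Summing in $j$, the $\sqrt n$-contribution is controlled by a Riemann sum for $\sqrt n \int_0^{\epsilon_{j_0}} x^{(1-r)/r}\mathcal N_{P,1}(x, \mathcal F)^{1/p}\,dx$, which is finite by hypothesis. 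For the tail term, the monotonicity $0 \le f - \pi_J f \le f_J^+ - \pi_J f =: e_J^f$ combined with $\|e_J^f\|_{P,1} \le \epsilon_J$ gives
\[
\sup_{f \in \mathcal F}|S_k(f - \pi_J f)| \le \max_{e \in \mathcal E_J}|S_k(e)| + 2k\,\epsilon_J,
\]
where $|\mathcal E_J| \le \mathcal N_J$, and one more application of \eqref{squinousfaut} handles the first summand.

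Both conclusions then follow by specializing the scheme. For \eqref{equiibis} I would rerun the chaining on $\mathcal G$ (whose bracketing numbers at scale $x$ satisfy $\mathcal N_{P,1}(x, \mathcal G) \le \mathcal N_{P,1}(x/2, \mathcal F)^2$) restricted to $\{g : \|g\|_{P,1} \le \delta\}$, starting at the level $j_0$ with $\epsilon_{j_0} \asymp \delta$; the corresponding truncated bracketing integral then vanishes as $\delta \to 0$. For \eqref{equiiter} I would start at $j_0 = 0$ and replace $n$ by $[n\delta]$, so that the main chaining contribution, raised to the power $p$ and divided by $\delta\,n^{p/2}$, produces a factor of order $\delta^{p/2-1}$ which tends to $0$ as $\delta \to 0$ because $p > 2$.

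The main obstacle is controlling the accumulation of the second piece $n^{1/p}$ of \eqref{squinousfaut} across levels: the cutoff $J = J(n)$ must be chosen so that the deterministic residue $n\,\epsilon_J$ stays negligible compared to $\sqrt n$, while simultaneously $\mathcal N_J^{1/p} n^{1/p}$ remains of order at most $\sqrt n$. This is precisely where the hypothesis $\lim_{x \to 0} x^{p-2}\mathcal N_{P,1}(x, \mathcal F) = 0$ enters: it ensures that such a balancing choice of $J$ exists, so that the two pieces of the chaining argument can be matched without spoiling the $\sqrt n$-normalization.
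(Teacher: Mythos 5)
Your chaining framework is essentially the one the paper uses (following Andrews--Pollard and Dedecker--Prieur): nested dyadic brackets, a telescoping decomposition, the elementary bound $\bigl\Vert\max_{1\le i\le N}|Z_i|\bigr\Vert_p\le N^{1/p}\max_i\Vert Z_i\Vert_p$ applied level by level together with \eqref{squinousfaut}, the bracketing integral to sum the $\sqrt n$--contributions, and the condition $\lim_{x\to 0}x^{p-2}\mathcal{N}_{P,1}(x,\mathcal{F})=0$ to choose the cutoff $J(n)$ so that the accumulated $n^{1/p}$--terms and the deterministic residual $n\epsilon_J$ both stay $o(\sqrt n)$. You identify the role of that last hypothesis correctly, and your treatment of \eqref{equiiter} (fixed anchor net, $[n\delta]$ summands, a factor $\delta^{p/2-1}$ with $p>2$) matches the paper's.

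The gap is in \eqref{equiibis}. If you anchor the chain over $\{g\in\mathcal{G}:\Vert g\Vert_{P,1}\le\delta\}$ at the level $\epsilon_{j_0}\asymp\delta$, the truncated bracketing integral does vanish, but you must still control $\max_k\max|S_k(\pi_{j_0}g)|$ over the anchor net itself, whose cardinality is $\mathcal{N}_{P,1}(\delta,\mathcal{G})$, in general as large as $\mathcal{N}_{P,1}(\delta/2,\mathcal{F})^2$. Applying \eqref{squinousfaut} to each anchor (which has $\Vert\pi_{j_0}g\Vert_{P,1}\le 2\delta$), paying the cardinality factor, dividing by $\sqrt n$ and letting $n\to\infty$ leaves a term of order $\mathcal{N}_{P,1}(\delta/2,\mathcal{F})^{2/p}\,\delta^{1/r}$. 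The hypotheses only give $\mathcal{N}_{P,1}(x,\mathcal{F})^{1/p}x^{1/r}\to 0$ (from the integrability of $x^{(1-r)/r}\mathcal{N}^{1/p}$ and the monotonicity of $\mathcal{N}$); they do not imply $\mathcal{N}_{P,1}(x,\mathcal{F})^{2/p}x^{1/r}\to 0$ (take $\mathcal{N}(x)\asymp x^{-\gamma p}$ with $1/(2r)<\gamma<\min(1/r,(p-2)/p)$), so this anchor term need not vanish as $\delta\to 0$. The paper avoids this by decoupling the anchor level from $\delta$: it first fixes $m=m(\varepsilon)$ so that every $f\in\mathcal{F}$ is approximated by some $f_{n,m}$ in the \emph{fixed} finite net $\mathcal{F}_m$ with chaining error at most $2\varepsilon$ uniformly for $n$ large (this is where the cutoff $k(n)$ and the balancing enter), and only then applies \eqref{squinousfaut} to the finitely many differences of net elements with $\Vert f-g\Vert_{P,1}\le 2\delta$ --- the ``comparison of pairs'' step of Andrews--Pollard --- paying a factor $\mathcal{N}_m^{2/p}$ that is independent of $\delta$ and is killed by $(2\delta)^{1/r}\to 0$. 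You need this device, or an equivalent one, to close \eqref{equiibis}.
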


\begin{proof}[Proof of Proposition \ref{p2}]
It is almost the same as that of Proposition 6 in \cite{DePr}. Let us
only give the main steps.

For any positive integer $k$, denote by ${\mathcal N}_k={\mathcal
N}_{P,1}(2^{-k},{\mathcal F})$ and by ${\mathcal F}_k$ a family of functions
$f_1^{k,-}\leq f_1^k,   \ldots , f^{k,-}_{{\mathcal N}_k}\leq
f^{k}_{{\mathcal N}_k}$ in ${\mathcal F}$ such that
$\|f_i^{k}-f_{i}^{k,-}\|_{P,1} \leq 2^{-k}$, and for any $f$ in
${\mathcal F}$, there exists an integer $1\leq i \leq {\mathcal N}_k$ such
that $f^{k,-}_{i}\leq f \leq f^k_{i}$.

We follow exactly the proof of Proposition 6 in \cite{DePr}. For reader's convenience, we give the key details. For any  $f$ in ${\cal F}$, there exist two functions $g_k^-$ and $g_k^+$ in ${\cal F}_k$ such that $g_k^-\leq f \leq g_k^+$ and $\|g_k^+-g_k^-\|_{P,1} \leq 2^{-k}$. Hence, for any $1 \leq j \leq n$, 
$$
  S_j(f)-S_j(g_k^-)  \leq   S_j(g_k^+)-S_j(g_k^-) +\sum_{i=1}^j {\mathbb E}((g_k^+-f)(X_i)) \leq  |S_j(g_k^+)-S_j(g_k^-)| + j 2^{-k} \, .
$$
Since $g_k^- \leq f$, we also have that $S_j(g_k^-)-S_j(f) \leq j 2^{-k}$, which enables us to conclude that
$$
|S_j(f)-S_j(g_k^-)| \leq   |S_j(g_k^+)-S_j(g_k^-)| + j 2^{-k}\, .
$$
Consequently
\begin{equation}\label{5.3}
  \sup_{f \in {\cal F}}|S_j(f)-S_j(g_k^-)| \leq \max_{1\leq i \leq {\cal N}_k}|S_j(f^k_i)-S_j(f^{k,-}_{i})|+j 2^{-k} \, .
\end{equation}
Notice now the following elementary fact: given $N$ real-valued random variables $Z_1, \dots, Z_N$, we have
\begin{equation}\label{5.1}
    \|\max_{1\leq i \leq N} |Z_i|\|_p \leq N^{1/p} \max_{1\leq i \leq N} \|Z_i\|_p \, .
\end{equation}
Combining (\ref{5.1}) and (\ref{5.3}), we obtain 
\begin{equation}\label{5.4}
\Bigl\|\max_{1 \leq j \leq n}\sup_{f \in {\cal F}}|S_j(f)-S_j(g_k^-)|\Bigr\|_p \leq {\cal
N}_k^{1/p} \max_{1\leq i \leq {\cal
N}_k}\|\max_{1 \leq j \leq n} | S_j(f_i^k)-S_j(f_{i}^{k,-}) | \|_p + n  2^{-k}\, .
\end{equation}
Starting from (\ref{5.4}) and applying \eqref{squinousfaut}, we obtain
\begin{equation}\label{5.5}
\Bigl\|\max_{1 \leq j \leq n}\sup_{f \in {\cal F}}n^{-1/2}|S_j(f)-S_j(g_k^-)|\Bigr\|_p \leq C({\cal N}_k^{1/p}2^{-k/r}+{\cal N}_k^{1/p}n^{1/p-1/2})
+\sqrt{n} 2^{-k}\, .
\end{equation}
By the arguments developed right after the inequality (4.6) in \cite{DePr}, we infer that there exists a sequence $h_{k(n)}(f)$ belonging to  ${\cal F}_{k(n)}$ such that
\begin{equation}\label{5.2}
    \lim_{n \rightarrow \infty} \Bigl\|\max_{1 \leq j \leq n}\sup_{f \in {\cal F}}n^{-1/2}|S_j(f)-S_j(h_{k(n)}(f))|\Bigr\|_p=0 \, .
\end{equation}

We prove now that for any $\varepsilon>0$, there exist
 $ N(\varepsilon)$ and $m=m(\varepsilon)$ such that : for any $n\geq N(\varepsilon)$
there exists a function
$f_{n,m}$ in ${\cal F}_m$ such that
\begin{equation}\label{5.7}
 \Bigl\|\max_{1 \leq j \leq n}\sup_{f \in {\cal F}}n^{-1/2}|S_j(f_{n,m})-S_j(h_{k(n)}(f))|\Bigr\|_p \leq  \varepsilon \, .
\end{equation}
Given  $h$ in ${\cal F}_k$, choose a function $T_{k-1}(h)$ in ${\cal F}_{k-1}$ such that
 $\|h-T_{k-1}(h)\|_{P,1} \leq 2^{-k+1}$. Denote by $\pi_{k,k}=Id$ and for $l<k$,  $\pi_{l,k}(h)=T_l \circ \cdots \circ T_{k-1}(h)$.
We consider the function $f_{n,m}=\pi_{m, k(n)}(h_{k(n)}(f))$. For the sake of brevity, we write $h_{k(n)}$ instead of  $h_{k(n)}(f)$.
We  have that
\begin{equation}\label{5.8}
\Bigl \|\max_{1 \leq j \leq n} \sup_{f \in {\cal F}} |S_j(f_{n,m})-S_j(h_{k(n)})|\Bigr\|_p \leq
\sum_{l=m+1}^{k(n)}\Bigl \|\max_{1 \leq j \leq n} \sup_{f \in {\cal F}} |S_j(\pi_{l, k(n)}(h_{k(n)}))-S_j(\pi_{l-1, k(n)}(h_{k(n)}))|\Bigr\|_p\, .
\end{equation}
Clearly
$$
  \Bigl \|\max_{1 \leq j \leq n} \sup_{f \in {\cal F}} |S_j(\pi_{l, k(n)}(h_{k(n)}))-S_j(\pi_{l-1, k(n)}(h_{k(n)}))|\Bigr\|_p \leq
\Bigl \|\max_{1 \leq j \leq n} \max_{f \in {\cal F}_l} |S_j(f)-S_j(T_{l-1}(f))|\Bigr\|_p\, .
$$
Using then \eqref{squinousfaut} combined with \eqref{5.1}, it follows that 
$$
\Bigl \|\max_{1 \leq j \leq n} \sup_{f \in {\cal F}} n^{-1/2} |S_j(f_{n,m})-S_j(h_{k(n)})|\Bigr\|_p  \leq  C
\sum_{l=m+1}^{k(n)}(2^{1/r}{\cal N}_l^{1/p}2^{-l/r}+ {\cal N}_l^{1/p}n^{1/p-1/2})
 \, .
$$
To complete the proof of \eqref{5.7} we use the same arguments as in \cite{DePr}, 
page 130.

Combining \eqref{5.2} and \eqref{5.7}, it follows that for any $\varepsilon>0$, there exist
 $ N(\varepsilon)$ and $m=m(\varepsilon)$ such that: for any $n\geq N(\varepsilon)$
there exists $f_{n,m}$ in ${\mathcal F}_m$  for which
\begin{equation}\label{mainpoint}
   \Bigl\| \max_{1 \leq k \leq n} \sup_{f \in  {\mathcal F}}n^{-1/2}|S_k(f)-S_k(f_{n,m})|\Bigr\|_p \leq 2\varepsilon \, .
\end{equation}
Using the same argument as in \cite{AP} (see the paragraph ``Comparison of pairs" page 124),
we obtain 
$$
\Bigl \| \max_{1 \leq k \leq n}\sup_{f , g \in {\mathcal F} \atop \|f-g\|_{P,1} \leq \delta} n^{-1/2}|S_k(f)-S_k(g)|\Bigr \|_p \leq 8 \varepsilon + {\cal N}_m^{2/p}
\sup_{f , g \in {\mathcal F}  \atop \|f-g\|_{P,1} \leq 2 \delta}\Bigl\|  \max_{1 \leq k \leq n} n^{-1/2}|S_k(f)-S_k(g)|\Bigr\|_p  \, .
$$
Since by \eqref{squinousfaut}, 
$$
\sup_{f , g \in {\mathcal F}  \atop \|f-g\|_{P,1} \leq 2 \delta}\Bigl\|  \max_{1 \leq k \leq n} n^{-1/2}|S_k(f)-S_k(g)|\Bigr\|_p  \leq C( (2 \delta)^{1/r} + n^{1/p-1/2}) \, ,
$$
it follows that
$$
\Bigl \| \max_{1 \leq k \leq n}\sup_{f , g \in {\mathcal F} \atop \|f-g\|_{P,1} \leq \delta} n^{-1/2}|S_k(f)-S_k(g)|\Bigr  \|_p \leq 8 \varepsilon + C {\cal N}_m^{2/p} ( (2 \delta)^{1/r} + n^{1/p-1/2})  \, ,
$$
which proves (\ref{equiibis}).

Let us now  prove (\ref{equiiter}). We apply (\ref{mainpoint}) with
$\varepsilon=1$:  for $n\geq \delta^{-1} N(1)$, we infer from
(\ref{mainpoint}) that there exists $f_{[n\delta],m}$ in ${\mathcal F}_m$  for which
$$
\Bigl\| \max_{1 \leq k \leq [n\delta]} \sup_{f \in  {\mathcal F}}n^{-1/2}|S_k(f)-S_k(f_{[n\delta],m})|\Bigr\|_p \leq \sqrt{\delta} \, .
$$
Hence
\begin{equation}\label{un*}
\Bigl\| \max_{1 \leq k \leq [n\delta]} \sup_{f \in  {\mathcal F}}n^{-1/2}|S_k(f)|\Bigr\|_p
\leq \sqrt{\delta} + \Bigl\| \max_{1 \leq k \leq [n\delta]}
\sup_{f \in  {\mathcal F}}n^{-1/2}|S_k(f_{[n\delta],m})|\Bigr\|_p \, .
\end{equation}
Now, since ${\mathcal F}_m$ contains $2{\mathcal N}_m$ functions $(g_\ell)_{\ell \in \{1,\dots,2{\mathcal N}_m\}}$
(each $g_\ell$ being  one of the functions $f_i^m$ or $f_i^{m,-}$ in ${\mathcal F}_m$), it follows that
$$
\Bigl\| \max_{1 \leq k \leq [n\delta]}
\sup_{f \in  {\mathcal F}}n^{-1/2}|S_k(f_{[n\delta],m})|\Bigr\|_p
\leq \sum_{\ell=1}^{2 {\mathcal N}_m}\frac 1 {\sqrt n}
\Big\| \max_{1 \leq k \leq [n\delta]} |S_k(g_{\ell})| \Big \|_p \, .
$$
Let $K_m=\max_{f \in {\mathcal F}_m} \|f\|_{P,1}$. Applying (\ref{squinousfaut}),
we infer that
\begin{equation}\label{deux*}
\Bigl\| \max_{1 \leq k \leq [n\delta]}
\sup_{f \in  {\mathcal F}}n^{-1/2}|S_k(f_{[n\delta],m})|\Bigr\|_p
\leq
2C {\mathcal N}_m (K_{m}^{1/r}\sqrt{\delta}+ n^{-(p-2)/2p}\delta^{1/p})\, .
\end{equation}
Since $m=m(1)$ is fixed, (\ref{equiiter}) follows from (\ref{un*}) and (\ref{deux*}) and the fact that $p>2$. 
\end{proof}

\section{Inequalities for ergodic torus automorphisms}\label{ineqauto}
\setcounter{equation}{0}

In this section, we keep the same notations as in the introduction.
Let us denote by $E_u$, $E_e$ and $E_s$ the $S$-stable vector spaces associated to the eigenvalues
of $S$ of modulus respectively larger than one, equal to one and smaller than one.
Let $d_u$, $d_e$ and $d_s$ be their respective dimensions.
Let $v_1,...,v_d$ be a basis of $\mathbb R^d$ such that $v_1,...,v_{d_u}$ are in
$E_u$, $v_{d_u+1},...,v_{d_u+d_e}$ are in $E_e$ and $v_{d_u+d_e+1},...,v_d$ are in $E_s$.
We suppose moreover that $\textrm{det}(v_1|v_2|\cdots|v_d)=1$.
Let  $\Vert \cdot \Vert $ be the norm on $\mathbb R^d$ given by
$$\Bigl \Vert\sum_{i=1}^dx_iv_i\Bigr \Vert=\max_{i=1,...,d}|x_i| $$
and  $d_0(\cdot,\cdot)$ be the metric induced by $\Vert \cdot \Vert$ on $\mathbb R^d$.
Let also $d_1$ be the metric induced by $d_0$ on $\mathbb T^d$ namely, 
$$
d_1 ( \bar x,\bar y ) = \inf_{z \in {\mathbb Z}^d} d_0(x+z,y) \, .
$$
We define now
$B_u(\delta):=\{y\in E_u\ :\ \Vert y  \Vert \le\delta\}$,
$B_e(\delta):=\{y\in E_e\ :\  \Vert y \Vert \le\delta\}$
and $B_s(\delta)=\{y\in E_s\ :\  \Vert y \Vert \le\delta\}$.
For every $f:\mathbb T^d\rightarrow\mathbb R$, we consider the
moduli of continuity defined by: for every $\delta>0$, 
\begin{equation}
\omega(f,\delta):=\sup_{\bar x,\bar y\in\mathbb T^d\, :\, d_1(\bar x,\bar y)
   \le\delta}|f(\bar x)-f(\bar y)| \, ,
\end{equation}
$$\omega_{(s,e)}(f,\delta)=\sup\{|f(\bar x)-f(\bar x+\overline{h_s}+\overline{h_e})|,\ \bar
 x\in\mathbb T^d,\ h_s\in B_s(\delta),
    \ h_e\in B_e(\delta)\}$$
and
$$ \omega_{(u)}(f,\delta)=\sup\{|f(\bar x)-f(\bar x+\overline{h_u})|,\ \bar
 x\in\mathbb T^d,\ h_u\in B_u(\delta)\} \, .$$
Let $r_u$ be the spectral radius of $S^{-1}_{|E_u}$. For every
$\rho_u\in(r_u,1)$, there exists $K>0$ such that, for every integer $n\ge 0$, we have
\begin{equation}\label{hyp-a}
\forall h_u\in E_u,\ \
     \Vert S^{-n} h_u \Vert \le K\rho_u^{n} \Vert h_u \Vert 
\end{equation}
and
\begin{equation}\label{hyp-b}
\forall (h_e,h_s)\in E_e\times E_s,\ \
     \Vert S^n (h_e+h_s) \Vert \le K n^{d_e} \Vert h_e+h_s \Vert  \, .
\end{equation}

The following inequality can be viewed as an extension to continuous functions of a result for
H\"older functions established in \cite{SLBFP} but with a $\sigma$-algebra satisfying $\mathcal F_0\subseteq T^{-1}\mathcal F_0$ (this condition is not satisfied 
in the construction of $\mathcal F_0$ considered in \cite{SLBFP}). For the next result, we shall then use the construction of $\mathcal F_0$ given 
in \cite{Lind,SLB} combined with some arguments developed in \cite{SLBFP}.
\begin{thm}\label{THM2}
Let $\rho_u\in(r_u,1)$ and
$\zeta\in (\rho_u^{ 1/(3(d+2)(d_e+d_s))},1)$.
There exist $C>0$, $N\ge 0$, $\xi\in(0,1)$, a sequence of measurable sets $(\mathcal V_n)_{n\ge 0}$
and a $\sigma$-algebra $\mathcal F_0$ such that
$\mathcal F_0\subseteq T^{-1}\mathcal F_0$ and such that, for every bounded
$\varphi:\mathbb T^d\rightarrow\mathbb R$ and every integer $n\ge N$, we have
\begin{equation}\label{THM2a}
\left\Vert{\mathbb E}[\varphi|\mathcal F_n]-\varphi\right\Vert_\infty\le \omega_{(u)}
     (\varphi,\rho_u^n) \, ,
\end{equation}
\begin{equation}\label{THM2b}
\mbox{on }\mathcal V_n,\ \
   \left|{\mathbb E}[\varphi|\mathcal F_{-n}]-{\mathbb E}[\varphi]\right|
        \le C(\Vert\varphi\Vert_\infty\xi^n+\omega_{(s,e)}(\varphi,\zeta^n))
\end{equation}
and
\begin{equation}\label{THM2c}
\bar\lambda(\mathbb T^d\setminus\mathcal V_n)\le C\xi^n \, ,
\end{equation}
where $\mathcal F_{k}:=T^{-k}\mathcal F_0$ for every $k\in\mathbb Z$.
\end{thm}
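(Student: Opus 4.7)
The plan is to take for $\mathcal F_0$ the $\sigma$-algebra $\sigma(\eta)$ associated to the measurable partition $\eta$ of $\mathbb T^d$ built in \cite{Lind,SLB}, whose atoms are pieces of local unstable leaves of $T$ and which satisfies $\eta\le T^{-1}\eta$, so that $\mathcal F_0\subseteq T^{-1}\mathcal F_0=\mathcal F_1$. This partition can be arranged so that every atom of $\eta$ is contained in a translate $\bar x_0+B_u(R_0)$ of an unstable ball of fixed radius $R_0>0$. With $\mathcal F_0$ so constructed, (\ref{THM2a}) becomes essentially immediate, while (\ref{THM2b})--(\ref{THM2c}) will follow by adapting the Fourier-analytic argument of \cite{SLBFP} (originally written for H\"older observables) after a preliminary smoothing of $\varphi$ in the $(s,e)$-directions.

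For (\ref{THM2a}), an atom of $\mathcal F_n$ is of the form $T^{-n}\mathcal A$ for some atom $\mathcal A$ of $\eta$. Combining $\mathcal A\subset\bar x_0+B_u(R_0)$ with (\ref{hyp-a}) applied to some $\tilde\rho_u\in(r_u,\rho_u)$, one obtains
\[
T^{-n}\mathcal A\subset T^{-n}\bar x_0+B_u(K\tilde\rho_u^n R_0)\subset T^{-n}\bar x_0+B_u(\rho_u^n)
\]
for all $n\ge N$ large enough. Since $\bar\lambda$ disintegrates (by Rokhlin's theorem) into conditional measures supported on these unstable pieces,
\[
|\mathbb E[\varphi|\mathcal F_n](\bar x)-\varphi(\bar x)|\le \sup_{\bar y\in T^{-n}\mathcal A}|\varphi(\bar y)-\varphi(\bar x)|\le\omega_{(u)}(\varphi,\rho_u^n),
\]
which is (\ref{THM2a}).

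The bulk of the work is (\ref{THM2b})--(\ref{THM2c}). The first step is to smooth $\varphi$ in the $(s,e)$-directions, by setting
\[
\tilde\varphi(\bar x):=\frac{1}{\bar\lambda(B_s(\zeta^n)+B_e(\zeta^n))}\int_{B_s(\zeta^n)+B_e(\zeta^n)}\varphi(\bar x+\bar h)\,d\bar h,
\]
so that $\|\varphi-\tilde\varphi\|_\infty\le\omega_{(s,e)}(\varphi,\zeta^n)$, $\mathbb E[\tilde\varphi]=\mathbb E[\varphi]$, and $\tilde\varphi$ is Lipschitz in the $(s,e)$-directions with constant $O(\|\varphi\|_\infty\zeta^{-n})$. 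Writing $\mathbb E[\varphi|\mathcal F_{-n}]-\mathbb E[\varphi]=\mathbb E[\varphi-\tilde\varphi|\mathcal F_{-n}]+(\mathbb E[\tilde\varphi|\mathcal F_{-n}]-\mathbb E[\tilde\varphi])$ reduces the task to bounding the second difference by $C\|\varphi\|_\infty\xi^n$ on a set $\mathcal V_n$ with $\bar\lambda(\mathbb T^d\setminus\mathcal V_n)\le C\xi^n$. For this I would expand $\tilde\varphi(\bar x)=\sum_k c_k\,\beta_k(\zeta^n)\,e^{2i\pi k\cdot x}$, the smoothing making $|\beta_k(\zeta^n)|$ small unless $k$ is essentially aligned with $E_u^*$; averaging each character against the conditional measure on an atom $T^n\mathcal A$ produces a factor controlled by $(S^\top)^n k$, and a Chebyshev-type estimate over the position of $T^n\mathcal A$ defines the set $\mathcal V_n$ together with the desired bound on $\bar\lambda(\mathbb T^d\setminus\mathcal V_n)$. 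The main obstacle is the polynomial blow-up $\|S^n_{|E_e}\|\lesssim n^{d_e}$ from (\ref{hyp-b}): the number of Fourier modes with $\|k\|\lesssim\zeta^{-n}$ grows polynomially in $\zeta^{-n}$, and each picks up a polynomial factor from the central direction, which forces $\zeta$ to be close to~$1$. The precise threshold $\zeta>\rho_u^{1/(3(d+2)(d_e+d_s))}$ is exactly what makes the resulting geometric sum converge at a net exponential rate $\xi<1$, and the delicate balancing of exponents, following \cite{SLBFP}, is the bulk of the technical calculation.
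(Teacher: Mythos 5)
Your treatment of (\ref{THM2a}) is essentially the paper's: the $\sigma$-algebra $\mathcal F_0$ is the one generated by Lind's measurable partition into pieces of unstable leaves (Proposition \ref{partition}), the atoms of $\mathcal F_n$ are contracted images $\bar x+\overline{S^{-n}F_{T^n\bar x}}$, and (\ref{hyp-a}) applied with some $\beta_u\in(r_u,\rho_u)$ absorbs the constant for $n\ge N$. That part is fine.

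For (\ref{THM2b})--(\ref{THM2c}) there are two genuine gaps. First, the heart of the matter is the quantitative equidistribution of the long unstable atoms $\bar x+\overline{S^{n}F_{T^{-n}\bar x}}$, and your sketch asserts it at exactly the point where it is hard: the claim that ``averaging each character against the conditional measure on an atom produces a factor controlled by $(S^\top)^n k$'' is not a soft counting statement. After the substitution $h\mapsto S^nh'$ the character average is governed by the pairing of $k$ with $S^n E_u$, and one needs a quantitative lower bound on the distance from a nonzero integer vector $k$ to the contracting-plus-neutral dual subspace (a Diophantine estimate going back to Katznelson and, in the non-hyperbolic ergodic case, to Baker's theorem as used by Lind). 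Without invoking this input, exponentially many Fourier modes with $\Vert k\Vert\lesssim\zeta^{-n}$ could fail to oscillate, and the geometric sum you describe does not close. The paper avoids redoing this by routing everything through the already-established exponential decorrelation for Lipschitz functions (Proposition \ref{decorr}): it identifies the average of $\varphi$ over a thickened unstable tube $U$ with the correlation $\int \mathbf 1_U\circ T^{-n}\cdot\varphi\,d\bar\lambda$ up to an $\omega_{(s,e)}$ error, mollifies both $\mathbf 1_U$ and $\varphi$ by a Lipschitz kernel $\chi_n$, and applies Proposition \ref{decorr} (Propositions \ref{PROP1} and \ref{PROP2}); the smoothing you propose in the $(s,e)$-directions plays the role of the unstable-direction smoothing by $g$ in Proposition \ref{PROP2}, but the decorrelation input must still be cited or reproved.

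Second, your definition of $\mathcal V_n$ misses what the exceptional set is actually for. All the estimates above carry a factor $1/m_u(F_{T^{-n}\bar x})$ coming from normalizing the average over the atom, so the bound (\ref{THM2b}) degenerates precisely where the local unstable leaf is short. The paper takes $\mathcal V_n:=\{m_u(F_{\cdot})\ge\beta^n\}$ for a suitable $\beta\in(\xi_2,1)$, and (\ref{THM2c}) is then exactly Proposition II.1 of \cite{SLB}, $\bar\lambda(m_u(F_{\cdot})<\beta^n)\le L\beta^{n/d_u}$. A ``Chebyshev-type estimate over the position of $T^n\mathcal A$'' does not detect small atoms, and without excluding them the pointwise bound on $\mathcal V_n$ fails. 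You would need to add this control on the leaf lengths (or an equivalent substitute) for your argument to yield (\ref{THM2b}) and (\ref{THM2c}).
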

\begin{rqe} \label{remarqueTH2}
With the notations of Theorem \ref{THM2}, (\ref{THM2b}) and (\ref{THM2c}) imply that, for
every $p\ge 1$ and every $(\rho_u,\zeta)$ as in Theorem \ref{THM2},
there exists $c_p$ such that, for every bounded $\varphi:{\mathbb T}^d\rightarrow\mathbb R$
and every integer $n\ge 0$, we have
\begin{equation}\label{THM2d}
\forall n\ge 0,\ \ \
   \left\Vert{\mathbb E}[\varphi|\mathcal F_{-n}]-{\mathbb E}[\varphi]\right\Vert_{p}
        \le c_p(\Vert\varphi\Vert_\infty\xi^{\frac n p}+\omega_{(s,e)}(\varphi,\zeta^n)) \, .
\end{equation}
\end{rqe}
The remainder of this section is devoted to the proof of
Theorem \ref{THM2} and to the statements and the proofs of some preliminary results.   
Let $\rho_u\in(r_u,1)$ and $K$ satisfying (\ref{hyp-a}) and (\ref{hyp-b}).
Let $m_u$, $m_e$, $m_s$ be the Lebesgue measure on $E_u$ (in the basis $v_1,...,v_{d_u}$),
$E_e$ (in the basis $v_{d_u+1},...,v_{d_u+d_e}$) and $E_s$ (in the basis
$v_{d_u+d_e+1},...,v_d$) respectively.
We observe that
$d\lambda(h_u+h_e+h_s)=dm_u(h_u)dm_e(h_e)dm_s(h_s).$

The properties satisfied by the filtration considered in \cite{Lind,SLB}
and enabling the use of Gordin's method
will be crucial here. Given a finite partition $\mathcal P$ of $\mathbb T^d$, we define
the measurable partition $\mathcal P_0^{\infty}$ by:
$$\forall \bar x\in\mathbb T^d,\ \
 \mathcal P_0^{\infty}(\bar x):=\bigcap_{k\ge 0}T^k\mathcal P(T^{-k}(\bar x)) \, .$$
Next, for every integer $n$, we consider the $\sigma$-algebras $\mathcal F_n$ generated by
$$\forall \bar x\in\mathbb T^d,\ \
  \mathcal P_{-n}^{\infty}(\bar x):=\bigcap_{k\ge -n}T^k\mathcal P(T^{-k}(\bar x))
      =T^{-n}(\mathcal P_0^{\infty}(T^n(\bar x)) \, .$$
We obviously have $\mathcal F_n=T^{-n}\mathcal F_0\subseteq\mathcal F_{n+1}=T^{-1}\mathcal F_n$.
Let $r_0>0$ be such that $(h_u,h_e,h_s)\mapsto \overline{h_u+h_e+h_s}$ defines a diffeomorphism
from $B_u(r_0)\times B_e(r_0)\times B_s(r_0)$ on its image in $\mathbb T^d$.
Observe that, for every $\bar x\in\mathbb T^d$,
on the set $\bar x+B_u(r_0)+B_e(r_0)+B_s(r_0)$, we have
$d\bar\lambda(\bar x+\overline{h_u}+\overline{h_e}+\overline{h_s})=dm_u(h_u)dm_e(h_e)dm_s(h_s).$
\begin{prop}[\cite{Lind,SLB} applied to $T^{-1}$, see also \cite{DeMePene1}]\label{partition}
There exist some $Q>0$ and some finite partition $\mathcal P$ of $\mathbb T^d$
whose elements are of the form
$\sum_{i=1}^dI_i\overline{v_i}$ where the $I_i$ are intervals with diameter smaller than
$\min(r_0,K)$ such that,
for almost every $\bar x\in\mathbb T^d$,
\begin{itemize}
\item the local leaf $\mathcal P_0^{\infty}(\bar x)$
of $\mathcal P_0^{\infty}$ containing $\bar x$ is a set $\bar x+\overline{F_{\bar x}}$, with $0\in F_{\bar x}\subseteq E_u$ and such that
$F_{\bar x}$ is a uniformly bounded convex set having non-empty interior in $E_u$,
\item we have, for all $n\in\mathbb Z$,
$${\mathbb E}[f|\mathcal F_n](\bar x)=\frac 1{m_u(S^{-n}F_{T^n\bar x})}
                 \int_{S^{-n}F_{T^n\bar x}}f(\bar x+\overline{h_u})\, dm_u(h_u) \, ,$$
\item for every $\gamma>0$, we have
$$m_u(\partial (F_{\bar x})(\gamma))\le Q\gamma \, ,  $$
where
$$\partial \mathcal C(\beta):=\{y\in \mathcal C \ :\ d_0(y,\partial \mathcal C)\le\beta \}  \ \text{ for any $\mathcal C \subseteq E_u$}\, . $$
\end{itemize}
\end{prop}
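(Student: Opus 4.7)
The plan is to invoke the measurable partition construction of Lind \cite{Lind} and Le Borgne \cite{SLB}, applied to $T^{-1}$ (in the sense that one reverses the role of forward and backward orbits, so that the induced filtration satisfies $\mathcal F_0 \subseteq T^{-1}\mathcal F_0$ rather than the opposite inclusion). Starting from a finite partition $\mathcal P$ of $\mathbb T^d$ into parallelepipeds $\sum_{i=1}^d I_i \overline{v_i}$ aligned with the basis $(v_1,\ldots,v_d)$, with side-intervals $I_i$ of length strictly less than $\min(r_0,K)$, the atoms of $\mathcal P_0^\infty$ are the sets of points whose backward orbits stay $\mathcal P$-close to that of $\bar x$ for all times, i.e.\ the local unstable manifolds. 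The bound by $r_0$ makes the parametrization $(h_u,h_e,h_s)\mapsto\overline{h_u+h_e+h_s}$ injective on each atom, so that the product structure of $\mathcal P$ is visible in the $(v_i)$-basis.

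For the leaf structure of the first bullet, if $\bar y=\bar x+\overline{h_u+h_e+h_s}$ belongs to $\mathcal P_0^\infty(\bar x)$, then $S^{-k}(h_u+h_e+h_s)$ must remain at distance at most $K$ from $\mathbb Z^d$ for every $k\ge 0$. The $E_u$-component decays exponentially by (\ref{hyp-a}), while the algebraic choice of $\mathcal P$ (with faces lying in the hyperplanes spanned by $(v_i)_{i\neq j}$) together with lattice arguments force the $E_e\oplus E_s$ component to vanish modulo $\mathbb Z^d$; uniqueness of the representative in $B_u(r_0)\times B_e(r_0)\times B_s(r_0)$ then yields $h_e=h_s=0$, so $F_{\bar x}\subseteq E_u$. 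Convexity of $F_{\bar x}$ follows because each $T^k\mathcal P(T^{-k}\bar x)$ intersected with the unstable slice through $\bar x$ is the image of a parallelepiped under the linear map $S^k$, hence convex, and convexity is preserved under arbitrary intersections. Uniform boundedness is inherited from $F_{\bar x}\subseteq\mathcal P(\bar x)$, whose diameter in $E_u$ is at most $K$.

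The conditional expectation formula is a disintegration statement: by translation invariance of $\bar\lambda$ in the $E_u$-direction and the product structure of $\mathcal P$ in the basis $(v_i)$, the conditional law of $\bar\lambda$ on each leaf of $\mathcal P_0^\infty$ is proportional to $m_u$. Pushing through $T^n$ and using $\mathcal P_{-n}^\infty(\bar x)=T^{-n}\mathcal P_0^\infty(T^n\bar x)$ yields the stated formula with normalization $m_u(S^{-n}F_{T^n\bar x})$. The boundary estimate $m_u(\partial F_{\bar x}(\gamma))\le Q\gamma$ is then a direct consequence of Steiner's formula for convex bodies: since each $F_{\bar x}$ is convex and uniformly bounded in the $d_u$-dimensional space $E_u$, the $m_u$-measure of a $\gamma$-tubular neighborhood of $\partial F_{\bar x}$ is linear in $\gamma$ with a constant $Q$ depending only on $d_u$ and the common diameter bound derived from $\mathcal P$.

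The main obstacle is the leaf-structure analysis of the first bullet: in the non-hyperbolic regime, the center direction $E_e$ interacts non-trivially with the lattice $\mathbb Z^d$ under iterates of $S^{-1}$, and only the careful algebraic choice of $\mathcal P$ from \cite{Lind} neutralizes this interaction and ensures that $F_{\bar x}$ sits cleanly inside $E_u$. Once the leaf structure is established, the remaining bullets are routine disintegration and convex-geometry facts.
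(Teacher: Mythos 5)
Note first that the paper gives no proof of Proposition \ref{partition} at all: it is imported verbatim from \cite{Lind} and \cite{SLB} (applied to $T^{-1}$), so there is no in-paper argument to compare yours against. Your sketch is a faithful reconstruction of what those references do, and the parts you actually carry out are correct: the conditional-expectation formula for general $n$ does follow from the $n=0$ case by $\mathbb{E}[f|T^{-n}\mathcal F_0]=\mathbb{E}[f\circ T^{-n}|\mathcal F_0]\circ T^n$ together with the change of variables $h_u\mapsto S^{-n}h_u$ (the Jacobian cancels against the normalization), convexity of $F_{\bar x}$ follows from writing it as an intersection of affine images of boxes in the unstable slice, and the bound $m_u(\partial F_{\bar x}(\gamma))\le Q\gamma$ is indeed a uniform Steiner-type estimate for convex bodies of bounded diameter in the $d_u$-dimensional space $E_u$.

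Two caveats. First, the entire mathematical content of the proposition is concentrated in the step you explicitly defer to \cite{Lind}: showing that a point whose backward orbit stays $\mathcal P$-close to that of $\bar x$ has vanishing $E_e\oplus E_s$ component. In the quasihyperbolic case the naive growth argument fails (a nonzero neutral component can stay at bounded, or polynomially growing, distance from $\mathbb Z^d$ forever), and the resolution requires Lind's arithmetic analysis exploiting that no eigenvalue of $S$ is a root of unity; your sketch correctly locates the difficulty but does not reproduce it, so it is a citation rather than a proof at exactly the same point where the paper also only cites. Second, you do not address the claim that $F_{\bar x}$ has non-empty interior in $E_u$ for almost every $\bar x$ (equivalently $m_u(F_{\bar x})>0$), which is needed for the conditional-expectation formula to make sense and, quantitatively, for the estimate $\bar\lambda(m_u(F_\cdot)<\beta^n)\le L\beta^{n/d_u}$ used later in the paper; this too is part of the construction in \cite{Lind,SLB} and should at least be flagged.
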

Recall now an exponential decorrelation result for Lipschitz continuous functions.
\begin{prop}[\cite{Lind} and also section 4.1 of \cite{FPESAIM}]\label{decorr}
There exist $C_0>0$ and $\xi_0 \in (0,1)$ such that, for every nonnegative integer $n$ and every Lipschitz continuous functions
$f,g:\mathbb T^d\rightarrow\mathbb C$ with $\int_{\mathbb T^d}g\, d\bar\lambda=0$, we have
$$\left|\int_{\mathbb T^d}(f.g\circ T^n)\, d\bar\lambda\right|
   \le C_0 ( \Vert f \Vert_\infty \Vert g \Vert_\infty+ \Vert f \Vert_\infty Lip(g)+
      \Vert g \Vert_\infty Lip(f))\xi_0^n \, ,$$
where $Lip(h)$ is the Lipschitz constant of $h$.
\end{prop}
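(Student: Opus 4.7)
The plan is to establish the exponential decorrelation through a Fourier decomposition on $\mathbb{T}^d$, combined with a mollification of $f$ and $g$ and a Katznelson-type Diophantine estimate on iterates of $S^T$ acting on $\mathbb{Z}^d\setminus\{0\}$.

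First I would mollify. Choose a Schwartz bump $\phi$ on $\mathbb R^d$ with $\int\phi=1$ and $\widehat\phi$ supported in the unit ball, set $\phi_\epsilon(x):=\epsilon^{-d}\phi(x/\epsilon)$, and let $f_\epsilon:=f*\phi_\epsilon$, $g_\epsilon:=g*\phi_\epsilon$, viewed as functions on $\mathbb{T}^d$. Standard estimates yield $\|f-f_\epsilon\|_\infty\le C\epsilon\,Lip(f)$ and the symmetric bound for $g$, whence
\begin{equation*}
\Bigl|\int_{\mathbb T^d}fg\circ T^n\,d\bar\lambda-\int_{\mathbb T^d}f_\epsilon g_\epsilon\circ T^n\,d\bar\lambda\Bigr|\le C\epsilon\bigl(Lip(f)\|g\|_\infty+\|f\|_\infty Lip(g)\bigr).
\end{equation*}

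Second, since $S\in GL_d(\mathbb Z)$, $S^T$ permutes $\mathbb Z^d$; expanding $f_\epsilon,g_\epsilon$ in Fourier series and using $\widehat{g_\epsilon\circ T^n}(k)=\widehat{g_\epsilon}((S^T)^{-n}k)$ together with $\widehat{g_\epsilon}(0)=0$ gives
\begin{equation*}
\int_{\mathbb T^d} f_\epsilon g_\epsilon\circ T^n\,d\bar\lambda=\sum_{m\in\mathbb Z^d\setminus\{0\}}\widehat{f_\epsilon}(m)\,\widehat{g_\epsilon}\bigl((S^T)^{-n}m\bigr).
\end{equation*}
Since $\widehat{f_\epsilon}(m)=\widehat f(m)\,\widehat\phi(\epsilon m)$ vanishes when $\|\epsilon m\|>1$, and similarly for $\widehat{g_\epsilon}$, the sum is restricted to $\|m\|\le1/\epsilon$ and $\|(S^T)^{-n}m\|\le1/\epsilon$, and $|\widehat{f_\epsilon}|\le\|f\|_\infty$, $|\widehat{g_\epsilon}|\le\|g\|_\infty$ give
\begin{equation*}
\Bigl|\int_{\mathbb T^d} f_\epsilon g_\epsilon\circ T^n\,d\bar\lambda\Bigr|\le\|f\|_\infty\|g\|_\infty\,N_n(1/\epsilon),
\end{equation*}
with $N_n(R):=\#\{m\in\mathbb Z^d\setminus\{0\}:\|m\|\le R,\ \|(S^T)^{-n}m\|\le R\}$.

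Third, and this is the heart of the proof, I would invoke a Diophantine estimate in the spirit of Katznelson. Ergodicity forces $S^T$ to have no nontrivial rational invariant subspace, so $(E_u^T\oplus E_e^T)\cap\mathbb Z^d=\{0\}$, and a Minkowski-type geometry-of-numbers argument yields $d(m,E_u^T\oplus E_e^T)\ge C\|m\|^{-\gamma}$ for every $m\in\mathbb Z^d\setminus\{0\}$ and some $\gamma>0$. Iterating $(S^T)^{-1}$ expands the $E_s^T$-component at exponential rate $\rho^{-n}$ for some $\rho\in(0,1)$, so $\|(S^T)^{-n}m\|\ge C'\rho^{-n}\|m\|^{-\gamma}$, and $N_n(R)=0$ once $R^{1+\gamma}<c\rho^{-n}$. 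Choosing $\epsilon:=c\rho^{n/(1+\gamma)}$ then kills the Fourier sum in step two and forces the mollification error in step one to decay like $\rho^{n/(1+\gamma)}$, giving the claimed bound with $\xi_0:=\rho^{1/(1+\gamma)}\in(0,1)$. The main obstacle is precisely this Diophantine estimate: in the Anosov case $E_e=\{0\}$ it follows from hyperbolicity together with the absence of a rational direction in $E_u^T$, but when $E_e^T$ is nontrivial the iterates of $S^T$ can linger near $E_u^T\oplus E_e^T$ for a long time, and controlling this requires the full strength of the no-root-of-unity hypothesis via the theorem of Katznelson whose quantitative form is worked out in the Lind and P\`ene references cited in the statement.
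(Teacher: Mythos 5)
The paper does not actually prove Proposition \ref{decorr}: it is imported from Lind and from Section 4.1 of P\`ene's ESAIM paper, and the argument there is exactly the one you outline (mollify, pass to Fourier series, and rule out small common frequencies of $f_\epsilon$ and of $g_\epsilon\circ T^n$ via Katznelson's Diophantine lemma), so your proposal is correct and follows the same route as the cited proofs. One slip worth fixing: ergodicity does \emph{not} forbid nontrivial rational $S^T$-invariant subspaces (any rational factor of the characteristic polynomial yields one); what it gives, via Kronecker's theorem applied to the unimodular integer matrix induced on such a subspace, is that no eigenvalue is a root of unity and hence that $(E_u^T\oplus E_e^T)\cap\mathbb{Z}^d=\{0\}$, which is the hypothesis Katznelson's lemma actually requires — and that lemma is proved by taking products over Galois conjugates of integer polynomials rather than by a Minkowski-type argument, though since you explicitly defer this quantitative step to Katznelson/Lind/P\`ene the overall architecture is sound.
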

Let $Q$ be the constant appearing in Proposition \ref{partition}.
The following result is an adaptation of Proposition 1.3 of \cite{SLBFP}.
\begin{prop}\label{PROP1}
Let $\zeta_1\in (\xi_0^{1/((d+2)(d_e+d_s))},1)$ where $\xi_0$ is given in Proposition \ref{decorr}.
There exist $C_1>0$, $N_1\ge 1$ and $\xi_1\in(0,1)$ such that,
for every $\bar\lambda$-centered bounded function
$\varphi:\mathbb T^d\rightarrow\mathbb R$, every $\bar x\in\mathbb T^d$, every $n\ge N_1$
and every bounded convex set
$\mathcal C\subseteq E_u$ with diameter smaller than $r_0$, satisfying
$m_u(\partial \mathcal C(\beta))\le Q\beta$ (for every $\beta>0$), we have
$$\left|\frac 1{m_u(S^n\mathcal C)}\int_{S^n\mathcal C}\varphi(\bar x+\overline{h_u})\, dm_u(h_u)\right|
 \le K_1\left(\frac{ \Vert \varphi \Vert_\infty\xi_1^n}{m_u(\mathcal C)}+\omega(\varphi,\zeta_1^n)\right) \, .$$
\end{prop}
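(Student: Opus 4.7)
The plan is to reduce the target unstable average to an integral on $\mathbb T^d$ against a smooth test function, so that the exponential decorrelation of Proposition \ref{decorr} can be applied, while the loss of regularity of $\varphi$ is handled by an auxiliary mollification. The change of variables $u=S^n h_u'$ together with the identity $T^n(T^{-n}\bar x+\overline{h_u'})=\bar x+\overline{S^n h_u'}$ recasts the quantity to estimate as
\begin{equation*}
A=\frac{1}{m_u(\mathcal C)}\int_{\mathcal C}\varphi\circ T^n(T^{-n}\bar x+\overline{h_u'})\,dm_u(h_u'),
\end{equation*}
the average of $\varphi\circ T^n$ over a one-dimensional unstable slice of diameter less than $r_0$ sitting at $T^{-n}\bar x$. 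Since $\bar\lambda(\varphi)=0$, comparing $A$ to the vanishing full integral is the natural road.

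Fix three parameters $\gamma_n,\delta_n,\epsilon_n$ to be chosen at the end. Let $g_{\mathcal C}\colon E_u\to[0,1]$ be a Lipschitz approximation of $\mathbf 1_{\mathcal C}$ with Lipschitz constant at most $1/\gamma_n$, equal to $\mathbf 1_{\mathcal C}$ outside the strip $\partial\mathcal C(\gamma_n)$, and let $\chi_e,\chi_s$ be unit-mass Lipschitz bumps supported in $B_e(\delta_n)$, $B_s(\delta_n)$. Using the local diffeomorphism $(h_u,h_e,h_s)\mapsto T^{-n}\bar x+\overline{h_u+h_e+h_s}$ on $B_u(r_0)\times B_e(r_0)\times B_s(r_0)$, define $f$ on $\mathbb T^d$ as the product
\begin{equation*}
f(T^{-n}\bar x+\overline{h_u+h_e+h_s})=\frac{g_{\mathcal C}(h_u)}{m_u(\mathcal C)}\cdot\frac{\chi_e(h_e)}{m_e(B_e(\delta_n))}\cdot\frac{\chi_s(h_s)}{m_s(B_s(\delta_n))},
\end{equation*}
extended by zero outside its natural support. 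One readily checks $\|f\|_\infty\ll (m_u(\mathcal C)\delta_n^{d_e+d_s})^{-1}$ and $\mathrm{Lip}(f)\ll (m_u(\mathcal C)\delta_n^{d_e+d_s}\min(\gamma_n,\delta_n))^{-1}$. Replacing $A$ by $\int f\cdot\varphi\circ T^n\,d\bar\lambda$ costs at most $Q\gamma_n\|\varphi\|_\infty/m_u(\mathcal C)$ from the boundary hypothesis $m_u(\partial\mathcal C(\gamma_n))\leq Q\gamma_n$, plus $C\omega(\varphi,K(n^{d_e}+1)\delta_n)$ from the $(s,e)$-smoothing, the latter being justified by (\ref{hyp-a})--(\ref{hyp-b}) which control $\|S^n h_e+S^n h_s\|$ on $B_e(\delta_n)\times B_s(\delta_n)$. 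Since $\varphi$ is merely continuous whereas Proposition \ref{decorr} demands a Lipschitz partner, I would mollify $\varphi$ at scale $\epsilon_n$ into a Lipschitz $\tilde\varphi$ with $\|\tilde\varphi-\varphi\|_\infty\leq\omega(\varphi,\epsilon_n)$, $\mathrm{Lip}(\tilde\varphi)\ll\|\varphi\|_\infty/\epsilon_n$ and centering defect $|\bar\lambda(\tilde\varphi)|\leq\omega(\varphi,\epsilon_n)$.

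Proposition \ref{decorr} applied to $f$ and $\tilde\varphi-\bar\lambda(\tilde\varphi)$ then produces a bound whose dominant summand is $\|\varphi\|_\infty\mathrm{Lip}(f)\xi_0^n\ll\|\varphi\|_\infty\xi_0^n/(m_u(\mathcal C)\delta_n^{d_e+d_s+1})$. The final step is the parameter balance: choosing $\epsilon_n\asymp\zeta_1^n$ and $\delta_n\asymp\zeta_1^n/n^{d_e}$ collapses the three modulus-of-continuity errors (mollification, $(s,e)$-smoothing, centering defect) into a single $C\omega(\varphi,\zeta_1^n)$, while picking $\gamma_n$ as any sufficiently fast geometric sequence places the boundary error inside the $\|\varphi\|_\infty\xi_1^n/m_u(\mathcal C)$ budget. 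The hypothesis $\zeta_1>\xi_0^{1/((d+2)(d_e+d_s))}$ is precisely what is needed to guarantee that $\xi_0^n/\delta_n^{d_e+d_s+1}$ stays $O(\xi_1^n)$ for some $\xi_1\in(0,1)$, once the polynomial factors in $n$ coming from (\ref{hyp-b}) have been absorbed. The main obstacle is exactly this balancing act: shrinking $\delta_n$ to make the $(s,e)$-error vanish inflates $\mathrm{Lip}(f)$ by an inverse power of $\delta_n$, so the geometric decay $\xi_0^n$ supplied by Proposition \ref{decorr} must be strong enough to beat this polynomial blow-up in the precise form dictated by the stated lower bound on $\zeta_1$.
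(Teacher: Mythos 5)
Your argument follows the same route as the paper's: thicken the unstable piece $S^n\mathcal C$ in the neutral and stable directions at a scale $\delta_n$ chosen so that (\ref{hyp-b}) keeps the resulting $(s,e)$-error at $\omega(\varphi,\zeta_1^n)$, regularize both the indicator of the thickened set and $\varphi$ into Lipschitz functions, apply the decorrelation estimate of Proposition \ref{decorr}, and balance the scales against $\xi_0^n$ --- the paper realizes the regularization by convolving with a Lipschitz kernel $\chi_n$ at scale $\xi_0^{n/(d+2)}$ rather than by your direct product construction, but this is immaterial. The proof is correct, with the one caveat that $\gamma_n$ cannot be ``any sufficiently fast geometric sequence'': it must remain comparable to $\delta_n$ (or to $\xi_0^{n/(d+2)}$) so that $\mathrm{Lip}(g_{\mathcal C})=\gamma_n^{-1}$ does not overwhelm the $\xi_0^n$ decay, which is exactly the tension you already identify for $\delta_n$.
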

\begin{proof}
Let $r:=\xi_0^{-1/(d+2)}$.
We take $\varepsilon_n=\alpha^n$ with $\alpha\in(0,1)$ such that
$\zeta_1>\alpha>\xi_0^{1/((d+2)(d_e+d_s))}\ge r^{-1}$ and $n$ such that $\alpha^n <r_0$.
Let $U:=T^{-n}\bar x+\overline{{\mathcal C}+B_s(\varepsilon_n)+B_e(\varepsilon_n)}$. We have
$T^n(U)=\bar x+\overline{S^n{\mathcal C}+S^nB_s(\varepsilon_n)+S^nB_e(\varepsilon_n)}$.
We have
\begin{eqnarray*}
\int_{\mathbb T^d}{\mathbf 1}_{T^nU}.\varphi\, d\bar\lambda
&=&\int_{{\mathcal C}\times B_e(\varepsilon_n)\times B_s(\varepsilon_n)}
     \varphi(T^n(T^{-n}\bar x+\overline{h_u}+\overline{h_e}+\overline{h_s}))\,
    dm_u(h_u)dm_e(h_e)dm_s(h_s)\\
&=&\int_{\mathcal V_n}\varphi(\bar x+\overline{h_u}+\overline{h_e}+\overline{h_s})\, dm_u(h_u)dm_e(h_e)dm_s(h_s) \, ,
\end{eqnarray*}
with $\mathcal V_n:=S^n{\mathcal C}\times S^nB_e(\varepsilon_n)\times S^nB_s(\varepsilon_n)$.
Moreover we have
$$\int_{S^n {\mathcal C}}\varphi(\bar x+\overline{h_u})\, dm_u(h_u) =  \frac{1}{m_s(S^n(B_s(\varepsilon_n))
        m_e(S^n(B_e(\varepsilon_n))}\int_{\mathcal V_n}
    \varphi(\bar x+\overline{h_u})\, dm_u(h_u)dm_e(h_e)dm_s(h_s) \, .$$
Hence, due to (\ref{hyp-b}), we have
$$\left| \int_{\mathbb T^d}
  {\mathbf 1}_{T^nU}.\varphi\, d\bar\lambda-m_s(S^n(B_s(\varepsilon_n))
        m_e(S^n(B_e(\varepsilon_n))\int_{S^n {\mathcal C}}\varphi(\bar x+\overline{h_u})\, dm_u(h_u)\right|
    \le \bar\lambda(U)\omega_{(s,e)}(\varphi,Kn^{d_e}\varepsilon_n) \, .$$
Since $\bar\lambda(U)=m_u(S^n{\mathcal C})m_s(S^n(B_s(\varepsilon_n))
        m_e(S^n(B_e(\varepsilon_n))$, we get, for $n$ large enough
(that is, such that $Kn^{d_e}\varepsilon_n\le\zeta_1^n$),
\begin{eqnarray*}
\left| \frac 1{\bar\lambda(U)}\int_{\mathbb T^d}{\mathbf 1}_{T^nU}\varphi\, d\bar\lambda-
     \frac 1{m_u(S^n{\mathcal C})}\int_{S^n {\mathcal C}}\varphi(\bar x+\overline{h_u})\, dm_u(h_u)\right|
    &\le& \omega_{(s,e)}(\varphi,Kn^{d_e}\varepsilon_n)\\
    &\le& \omega_{(s,e)}(\varphi,\zeta_1^n) \, .
\end{eqnarray*}
For every $n\ge 0$ and $\bar x\in\mathbb T^d$, we define
$\chi_n(\bar x):=(d+1)2^{-d}r^{n(d+1)} d_1(\bar x,\mathbb T^d\setminus B(0,r^{-n}))$, where $B(0,r^{-n}) = \{ \bar x \in \mathbb T^d \, , \, d_1 ( \bar 0 , \bar x) \leq r^{-n} \}$.      
Let us observe that $\chi_n$
is a nonnegative $(d+1)r^{n(d+1)}2^{-d}$-Lipschitz continuous function
supported in $B(0,r^{-n})$, uniformly bounded by $(d+1)2^{-d} r^{nd}$
and such that $\int_{\mathbb T^d}\chi_n\, d\bar\lambda=1$.
We will denote by $*$ the usual convolution product with respect to $\bar\lambda$.
We will estimate
$$ \left|\int_{\mathbb T^d} {\mathbf 1}_U\circ T^{-n}.\varphi\, d\bar\lambda
     -\int_{\mathbb T^d} (\chi_n*{\mathbf 1}_U)\circ T^{-n}.(\chi_n*\varphi)) \, d\bar\lambda\right| \, . $$
First observe that
\begin{equation}\label{ZZZ1}
\left|\int_{{\mathbb T}^d}(\chi_n*\mathbf 1_U)\circ T^{-n}.(\chi_n*\varphi-\varphi)\, d\bar\lambda\right|
    \le \omega(\varphi,r^{-n})\bar\lambda(U) \, .
\end{equation}
Second, we have
\begin{equation}\label{ZZZ2}
\left|\int_{{\mathbb T}^d}(\chi_n*\mathbf 1_U-\mathbf 1_U)\circ T^{-n}.\varphi\, d\bar\lambda\right|
    \le \Vert\varphi\Vert_\infty\int_{{\mathbb T}^d}|\chi_n*\mathbf 1_U-\mathbf 1_U|d\bar\lambda \, ,
\end{equation}
and let us prove that
\begin{equation}\label{ZZZ3}
\int_{{\mathbb T}^d}|\chi_n*\mathbf 1_U-\mathbf 1_U|d\bar\lambda\le 3\bar\lambda(\partial U(r^{-n})) \, .
\end{equation}
To see this, observe that $\chi_n(\bar t)\mathbf 1_U(\bar x-\bar t)-\mathbf 1_U(\bar x)=(\chi_n(\bar t)-1)\mathbf 1_U(\bar x)$ 
except if $\mathbf 1_U(\bar x-\bar t)\ne\mathbf 1_U(\bar x)$ and if $\bar t\in B(0, r^{-n})$.
Hence $\chi_n*\mathbf 1_U(\bar x)\ne\mathbf 1_U(\bar x)$ implies either that $\bar x\in \partial U(r^{-n})$ where $\partial U(r^{-n} ):= \{x \in U \, : \, d_1(x, \partial U) < r^{-n} \}$, or
that $\bar x$ belongs to the set $U'$ of points such that
$\bar x\not\in U$ but there exists $\bar t_0\in B(0,r^{-n})$ such that $\bar x-\bar t_0\in U$.

On the one hand, we have
\begin{eqnarray}
\int_{\partial U(r^{-n})}|\chi_n*\mathbf 1_U-\mathbf 1_U|\, d\bar\lambda
   &\le&\int_{\partial U(r^{-n})}\left(\int_{\mathbb T^d}\chi_n(\bar t)\mathbf 1_U(\bar x-\bar t)\, d\bar\lambda( \bar t)\right)\,
         d\bar\lambda(\bar x) +\bar\lambda(\partial U(r^{-n}))\nonumber\\
   &\le&\bar\lambda(\partial U(r^{-n}))
 \int_{\mathbb T^d}\chi_n(\bar t) d\bar\lambda( \bar t)+\bar\lambda(\partial U(r^{-n}))\nonumber\\
   &\le& 2\bar\lambda(\partial U(r^{-n}))\label{ZZZ3a},
\end{eqnarray}
using the fact that $\chi_n$ is  nonnegative with unit integral.
On the other hand, we have
\begin{eqnarray}
\int_{U'}|\chi_n*\mathbf 1_U-\mathbf 1_U|\, d\bar\lambda
   &\le& \int_{U'}\left(\int_{\mathbb T^d}\chi_n(\bar t)\mathbf 1_U(\bar x-\bar t)\, d\bar\lambda(\bar t)\right)\,
         d\bar\lambda(\bar x)\nonumber\\
    &\le& \int_{\mathbb T^d\setminus U}\left(\int_{\bar t:\bar x-\bar t\in U}\chi_n(\bar t)\, d\bar\lambda(\bar t)\right)\,
         d\bar\lambda( \bar x)\nonumber\\
    &\le& \int_{\mathbb T^d}\left(\int_{\partial U(r^{-n})}\chi_n(\bar x-\bar s)\, d\bar\lambda(\bar s)\right)\,
         d\bar\lambda(\bar x)\nonumber\\
    &\le& \int_{\partial U(r^{-n})}\left(\int_{\mathbb T^d}\chi_n(\bar x-\bar s)\, d\bar\lambda(\bar x)\right)\,
         d\bar\lambda(\bar s)=\bar\lambda(\partial U(r^{-n}))\label{ZZZ3b},
\end{eqnarray}
using again the properties of $\chi_n$. Now, (\ref{ZZZ3a}) and (\ref{ZZZ3b}) directly give (\ref{ZZZ3}).
Due to (\ref{ZZZ1}), (\ref{ZZZ2}) and (\ref{ZZZ3}), we have
\begin{multline*}
\frac 1{\bar\lambda(U)}
  \left|\int_{\mathbb T^d} {\mathbf 1}_U\circ T^{-n}.\varphi\, d\bar\lambda\right|\le
  \frac 1{\bar\lambda(U)}\Big(
     \Big|\int_{\mathbb T^d} (\chi_n*{\mathbf 1}_U)\circ T^{-n}.(\chi_n*\varphi)) \, d\bar\lambda\Big|\\
+\bar\lambda(U)\omega(\varphi,r^{-n})+3\Vert \varphi \Vert_\infty
    \bar\lambda(\partial U(r^{-n}))\Big).
\end{multline*}
Now, the hypothesis on $m_u(\partial\mathcal C(\beta))$ implies that there exists $Q_1$ (depending on $Q$ and on
$T$) such that
$$\forall n\ge 0,\ \ \  \bar\lambda(\partial U(r^{-n}))\le Q_1 r^{-n} \, . $$
Moreover, applying Proposition \ref{decorr} with $f=\chi_n*\varphi$ and $g=\chi_n*\mathbf 1_U$ and using the following
facts
$$\Vert\chi_n*\varphi\Vert_\infty\le\Vert\varphi\Vert_\infty,\ \
    \Vert\chi_n*\mathbf 1_U\Vert_\infty\le 1,\ Lip(\chi_n*\mathbf 1_U)\le Lip(\chi_n)\ \ \mbox{and}\ \
   Lip(\chi_n*\varphi)\le \Vert\varphi\Vert_\infty Lip(\chi_n),$$
we get the existence of $\tilde C_0$ (depending on $C_0$ and on $Q$) such that we have
\begin{eqnarray*}
\frac 1{\bar\lambda(U)}
  \left|\int_{\mathbb T^d} {\mathbf 1}_U\circ T^{-n}.\varphi\, d\bar\lambda\right|
&\le& \tilde C_0\Vert \varphi \Vert_\infty\frac{r^{-n}+(1+r^{n(d+1)})\xi_0^n}{\varepsilon_n^{d_e+d_s}
     m_u({\mathcal C})}+ \omega(\varphi,r^{-n})\\
&\le& 3\tilde C_0\Vert \varphi \Vert_\infty\frac{\xi_0^{n/(d+2)}}{\varepsilon_n^{d_e+d_s}
     m_u({\mathcal C})}+ \omega(\varphi,\zeta_1^n),
\end{eqnarray*}
since $r^{-1}=r^{d+1}\xi_0=\xi_0^{1/(d+2)}$.
We conclude by taking $\xi_1:=\xi_0^{1/(d+2)} \alpha^{-(d_e+d_s)}<1$.
\end{proof}

In the next result (which is an adaptation of Proposition 1.4 of \cite{SLBFP}),
we prove that Proposition \ref{PROP1} holds true with the stable-neutral continuity modulus
$\omega_{(s,e)}$ instead of $\omega$.
\begin{prop}\label{PROP2}
Let $\zeta_1\in (\xi_0^{1/((d+2)(d_e+d_s))},1)$  where $\xi_0$ is given in Proposition \ref{decorr}.
There exist $C_2>0$, $N_2\ge 1$ and $\xi_2\in(0,1)$ such that,
for every $\bar\lambda$-centered bounded function
$\varphi:\mathbb T^d\rightarrow\mathbb R$, every $\bar x\in\mathbb T^d$, every $n\ge N_2$
and every bounded convex set
$\mathcal C\subseteq E_u$ with diameter smaller than $r_0$ and satisfying
$m_u(\partial \mathcal C(\beta))\le Q\gamma$, we have
$$\left|\frac 1{m_u(S^n(\mathcal C))}\int_{S^n{\mathcal C}}\varphi(\bar x+\overline{h_u})\, dm_u(h_u)\right|
      \le K_2\left( \frac{\Vert \varphi \Vert_\infty}{m_u({\mathcal C})}\xi_2^n
            +\omega_{(s,e)}(\varphi,\zeta_1^n)\right) \, . $$
\end{prop}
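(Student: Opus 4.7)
The plan is to reduce Proposition~\ref{PROP2} to Proposition~\ref{PROP1} by replacing $\varphi$ by its average $\varphi_\delta$ along unstable translates at a scale $\delta=\mu^n$ to be chosen. For $0<\delta<r_0$, set
\begin{equation*}
\varphi_\delta(\bar x):=\frac{1}{m_u(B_u(\delta))}\int_{B_u(\delta)}\varphi(\bar x+\overline{y})\,dm_u(y).
\end{equation*}
Then $\int_{\mathbb T^d}\varphi_\delta\,d\bar\lambda=0$ by Fubini and $\|\varphi_\delta\|_\infty\le\|\varphi\|_\infty$, so Proposition~\ref{PROP1} applies to $\varphi_\delta$. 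I would first establish the modulus estimate
\begin{equation*}
\omega(\varphi_\delta,\eta)\le\omega_{(s,e)}(\varphi,\eta)+C_u\|\varphi\|_\infty\,\eta/\delta\qquad(0<\eta\le\delta),
\end{equation*}
obtained by splitting a shift $h=h_u+h_e+h_s$ of norm $\le\eta$ inside the average: the $(s,e)$-part is dominated pointwise in $y$ by $\omega_{(s,e)}(\varphi,\eta)$, while the $u$-part, after the change of variables $y\mapsto y-h_u$ in $B_u(\delta)$, is controlled by $\|\varphi\|_\infty\,m_u(B_u(\delta)\triangle(B_u(\delta)+h_u))/m_u(B_u(\delta))\lesssim \eta/\delta$ (since $B_u(\delta)$ is a cube in the basis $v_1,\ldots,v_{d_u}$).

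The second step compares
\begin{equation*}
I:=\frac{1}{m_u(S^n\mathcal C)}\int_{S^n\mathcal C}\varphi(\bar x+\overline{h_u})\,dm_u(h_u)\quad\text{and}\quad I_\delta:=\frac{1}{m_u(S^n\mathcal C)}\int_{S^n\mathcal C}\varphi_\delta(\bar x+\overline{h_u})\,dm_u(h_u).
\end{equation*}
Fubini and a translation in the variable $h_u$ give
\begin{equation*}
|I-I_\delta|\le \frac{2\|\varphi\|_\infty\,m_u\bigl(\partial(S^n\mathcal C)(\delta)\bigr)}{m_u(S^n\mathcal C)}.
\end{equation*}
From \eqref{hyp-a}, pulling the $\delta$-neighbourhood back by $S^{-n}$ yields the inclusion $\partial(S^n\mathcal C)(\delta)\subseteq S^n\bigl(\partial\mathcal C(K\rho_u^n\delta)\bigr)$; combined with the boundary hypothesis $m_u(\partial\mathcal C(\beta))\le Q\beta$, the Jacobians of $S^n_{|E_u}$ cancel and we obtain $|I-I_\delta|\le 2QK\|\varphi\|_\infty\rho_u^n\delta/m_u(\mathcal C)$. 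Finally, applying Proposition~\ref{PROP1} to $\varphi_\delta$ and inserting the modulus estimate above with $\eta=\zeta_1^n$ (valid once $\zeta_1^n\le\delta$) gives
\begin{equation*}
|I|\le K_1\Bigl(\frac{\|\varphi\|_\infty\xi_1^n}{m_u(\mathcal C)}+\omega_{(s,e)}(\varphi,\zeta_1^n)+C_u\|\varphi\|_\infty\frac{\zeta_1^n}{\delta}\Bigr)+\frac{2QK\|\varphi\|_\infty\rho_u^n\delta}{m_u(\mathcal C)}.
\end{equation*}

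The choice $\delta=\mu^n$ with any $\mu\in(\zeta_1,1)$ makes the two remaining $\delta$-dependent errors decay geometrically at rates $\zeta_1/\mu<1$ and $\rho_u\mu<1$, so the claim follows with $\xi_2:=\max(\xi_1,\zeta_1/\mu,\rho_u\mu)<1$ for $n\ge N_2$ large enough that $\mu^n\le r_0$ and $\zeta_1^n\le\mu^n$; the optimal rate $\xi_2=\sqrt{\zeta_1\rho_u}$ is reached at $\mu=\sqrt{\zeta_1/\rho_u}$ whenever this value lies in $(\zeta_1,1)$. The main obstacle will be the boundary estimate for $S^n\mathcal C$: the hypothesis on $\mathcal C$ has to be transferred through the linear expansion, and it is essential that $S^{-n}$ contract the unstable direction at the exponential rate $\rho_u^n$---otherwise the factor $\rho_u^n\delta$ would not be small enough to counterbalance the $\zeta_1^n/\delta$ loss incurred when applying Proposition~\ref{PROP1} to the smoothed function $\varphi_\delta$.
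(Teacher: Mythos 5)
Your plan is correct: every step checks out, including the modulus estimate $\omega(\varphi_\delta,\eta)\le\omega_{(s,e)}(\varphi,\eta)+C_u\Vert\varphi\Vert_\infty\eta/\delta$ (the symmetric-difference bound $m_u\big((B_u(\delta)+h_u)\triangle B_u(\delta)\big)\lesssim \eta\,\delta^{d_u-1}$ for the cube is what makes it work), the translation argument giving $|I-I_\delta|\le 2\Vert\varphi\Vert_\infty m_u\big(\partial(S^n\mathcal C)(\delta)\big)/m_u(S^n\mathcal C)$, and the transfer of the boundary collar through $S^{-n}$, which is exactly the paper's computation with $\delta$ in place of $r_0$. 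The route differs from the paper's in the smoothing step. The paper introduces a partition of unity $H_1,\dots,H_I$ subordinate to product charts, sets $\varphi_i=H_i\varphi$, and replaces each $\varphi_i$ by $\psi_i(\bar y_i+\overline{h_u}+\overline{h_e}+\overline{h_s})=g(h_u)\int_{B_u(r_0)}\varphi_i(\bar y_i+\overline{h'_u}+\overline{h_e}+\overline{h_s})\,dm_u(h'_u)$, i.e.\ it averages over the \emph{full} unstable ball $B_u(r_0)$; the payoff is that $\int\psi_i$ and $\int\varphi_i$ agree \emph{exactly} on every connected component of $(\bar x+\overline{S^n\mathcal C})\cap P_i$ not meeting $\partial(S^n\mathcal C)$, so the only error is the boundary collar at the fixed scale $r_0$, of relative measure $QK\rho_u^nr_0/m_u(\mathcal C)$, and the modulus of $\psi_i$ costs only the Lipschitz constants of $g$ and $H_i$ times $\zeta_1^n$. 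You instead mollify globally at the shrinking scale $\delta=\mu^n$, which avoids the charts and the partition of unity entirely and needs no exact fibre cancellation, at the price of an extra competition between the two errors $(\zeta_1/\mu)^n$ and $(\rho_u\mu)^n$, resolved by choosing $\mu\in(\zeta_1,1)$. Your $\xi_2=\max(\xi_1,\zeta_1/\mu,\rho_u\mu)$ versus the paper's $\xi_2=\max(\xi_1,\zeta_1,\rho_u)$: the constants differ but both are in $(0,1)$, which is all the statement requires. Your version is arguably the more elementary argument; the paper's buys a slightly better exponent and sidesteps the need for the $\eta\le\delta$ bookkeeping.
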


\begin{proof}
We consider a finite cover of $\mathbb T^d$ by sets $P_i=\bar y_i+\overline{B_u(r_0)+B_e(r_0)
+B_s(r_0)}$ for $i=1,...,I$, $\bar y_i$ being fixed points of $\mathbb T^d$.
We consider a partition of the unity $H_1,...,H_I$ (i.e. $\sum_{i=1}^IH_i=1$) such that
each $H_i$ is infinitely differentiable, with support in $P_i$.
Let $\varphi:\mathbb T^d\rightarrow\mathbb R$ be a bounded centered function.
For every $i=1,...,I$, we define $\varphi_i:=H_i\varphi$.
We have
\begin{equation}\label{EQ1}
 \int_{S^n{\mathcal C}}\varphi(\bar x+\overline{h_u})\, dm_u(h_u)
      = \sum_{i=1}^I\int_{S^n{\mathcal C}}\varphi_i(\bar x+\overline{h_u})\, dm_u(h_u).
\end{equation}
We also consider a continuously differentiable function $g:E_u\rightarrow [0,+\infty)$
with support in $B_u(r_0)$ and such that $\int_{E_u}g(h_u)\, dm_u(h_u)=1$.
We approximate now each $\varphi_i$ by a regular function $\psi_i$ by setting,
for every $(h_u,h_e,h_s)\in B_u(r_0)\times B_e(r_0)\times B_s(r_0)$,
$$\psi_i(\bar y_i+\overline{h_u}+\overline{h_e}+\overline{h_s})=
     g(h_u)\int_{B_u(r_0)}\varphi_i(\bar y_i+\overline{h'_u}+\overline{h_e}+\overline{h_s})
    \, dm_u(h'_u),$$
$\psi_i$ being null outside of $P_i$.
We observe that
$$\int_{P_i}\psi_i\, d\bar\lambda= \int_{P_i}\varphi_i\, d\bar\lambda,$$
that $||\psi_i||_\infty\le \Vert \varphi \Vert_\infty\Vert g \Vert_\infty m_u(B_u(r_0))$
and that, for every $\delta>0$,
\begin{eqnarray*}
\omega(\psi_i,\delta)&\le& m_u(B_u(r_0))\left[
    \Vert \varphi \Vert_\infty Lip(g)\delta+\Vert g \Vert_\infty\omega_{(s,e)}(\varphi_i,\delta)\right]\\
  &\le& m_u(B_u(r_0))\left[
    \Vert \varphi \Vert_\infty Lip(g)\delta+\Vert g \Vert_\infty\Vert \varphi \Vert_\infty Lip(H_i)\delta
      +\Vert g \Vert_\infty\omega_{(s,e)}(\varphi,\delta) \Vert H_i \Vert_\infty\right].
\end{eqnarray*}
Now, applying Proposition \ref{PROP1} to $\psi_i$, for every $n\ge N_1$, we have
\begin{equation}\label{EQ2}
\left|\frac 1{m_u(S^n{\mathcal C})}\int_{S^n{\mathcal C}}\psi_i(\bar x+\overline{h_u})\, dm_u(h_u)\right|
 \le K'_1\left(\frac{ \Vert \varphi \Vert_\infty\xi_1^n}{m_u({\mathcal C})}+\omega_{(s,e)}(\varphi,\zeta_1^n)
       +\Vert \varphi \Vert_\infty\zeta_1^n\right).
\end{equation}
We observe that the connected components of $(\bar x+\overline{S^n{\mathcal C}})\cap P_i$ are
$\bar x+\overline{C_{i,j}}$, where $C_{i,j}$ are some connected subsets of $E_u$.
We have
$$\int_{S^n{\mathcal C}}\varphi_i(\bar x+\overline{h_u})\, dm_u(h_u)=\sum_j\int_{C_{i,j}}
\varphi_i(\bar x+\overline{h_u})\, dm_u(h_u)$$
and
$$\int_{S^n{\mathcal C}}\psi_i(\bar x+\overline{h_u})\, dm_u(h_u)=\sum_j\int_{C_{i,j}}
\psi_i(\bar x+\overline{h_u})\, dm_u(h_u) \, . $$
Now, if $C_{i,j}$ does not contain any point of $\partial(S^n{\mathcal C})$, then there exists
$h_e^{(j)}\in B_e(r_0)$ and $h_s^{(j)}\in B_s(r_0)$ such that
$$\bar x+\overline{C_{i,j}}=
   \left\{\bar y_i+\overline{h_e^{(j)}}+\overline{h_s^{(j)}}+\overline{h_u};\
   \ h_u\in B_u(r_0)\right\} \, . $$
Using the definition of $\psi_i$, we get
\begin{eqnarray*}
\int_{C_{i,j}} \psi_i(\bar x+\overline{h_u})\, dm_u(h_u)&=&\int_{B_u(r_0)}
     \psi_i(\bar y_i+\overline{h_e^{(j)}}+\overline{h_s^{(j)}}+\overline{h_u})\, dm_u(h_u)\\
  &=& \int_{B_u(r_0)}
     \varphi_i(\bar y_i+\overline{h_e^{(j)}}+\overline{h_s^{(j)}}+\overline{h_u})\, dm_u(h_u),
\end{eqnarray*}
since $\int_{B_u(r_0)}g(h_u)\, dm_u(h_u)=1$ and so
$$
\int_{C_{i,j}} \psi_i(\bar x+\overline{h_u})\, dm_u(h_u)
= \int_{C_{i,j}} \varphi_i(\bar x+\overline{h_u})\, dm_u(h_u).
$$
Therefore we have
\begin{eqnarray}
\left|\frac 1{m_u(S^n{\mathcal C})}\int_{S^n{\mathcal C}}(\psi_i(\bar x+\overline{h_u})
    -\varphi_i(\bar x+\overline{h_u}))\,
     dm_u(h_u)\right|
 &\le& 2\Vert \varphi \Vert_\infty\frac{m_u(\partial(S^n {\mathcal C})(r_0))}{m_u(S^n{\mathcal C})}\nonumber\\
 &\le& 2\Vert \varphi \Vert_\infty\frac{m_u(\partial {\mathcal C} (K\rho_u^n r_0))}{m_u({\mathcal C})}\nonumber\\
 &\le& 2\Vert \varphi \Vert_\infty\frac{Q K\rho_u^n r_0}{m_u({\mathcal C})}\, .
 \label{EQ3}
\end{eqnarray}
We conclude thanks to (\ref{EQ1}), (\ref{EQ2}) and (\ref{EQ3}),
by taking $\xi_2:=\max(\xi_1,\zeta_1,\rho_u)$.
\end{proof}
\begin{proof}[Proof of Theorem \ref{THM2}]

We start by proving the first point. By  Proposition \ref{partition},
\beq \label{THM2pritem1}
{\mathbb E}[\varphi|\mathcal F_n](\bar x) - \varphi(\bar x)  =\frac 1{m_u(S^{-n}F_{T^n\bar x})}
                 \int_{S^{-n}F_{T^n\bar x}} \big ( \varphi(\bar x+\overline{h_u}) - \varphi(\bar x) \big ) \, dm_u(h_u) \, .
\eeq
Let $h_u \in S^{-n}F_{T^n\bar x}$ and $y \in F_{T^n\bar x}$ such that $ h_u = S^{-n} (y) $. Take now $\beta_u \in (r_u , \rho_u)$. From (\ref{hyp-a}) and the fact that $F_{T^n\bar x}$ is uniformly bounded, we derive that there exists a positive constant $C$ such that $\Vert h_u \Vert \leq C \beta_u^n$. 
Therefore, starting from \eqref{THM2pritem1}, by definition of $\omega_{(u)} ( \varphi , \delta)$, we get 
$$
\Vert {\mathbb E}[\varphi|\mathcal F_n] - \varphi  \Vert_{\infty} \leq \omega_{(u)} ( \varphi , C \beta_u^n) \, .
$$
The first point of Theorem \ref{THM2} then comes from the fact that there exists $N>0$ such that for any $n \geq N$, $C \beta_u^n \leq \rho_u^n$. 

\smallskip

We turn now to the proof of the second point. Let $\zeta_1$, $C_2$, $\xi_2$ and $N_2$ as in Proposition \ref{PROP2} with $\zeta_1<\zeta$.
Let $\beta\in(\xi_2,1)$ and $\mathcal V_n:=\{m_u(F_{\cdot})\ge \beta^n\}$.
We take $\xi=\max(\xi_2/\beta,\beta^{\frac 1 {d_u}})$.
To prove the second point, we use again the expression
of ${\mathbb E}[\varphi|\mathcal F_{-n}]$ given in Proposition \ref{partition} and
we apply Proposition \ref{PROP2} with $\mathcal C=F_{T^{-n}(\bar x)}$ with the notation of Proposition
\ref{partition}.

\smallskip

It remains to prove the last point of the theorem. It comes from the fact (proved in Proposition II.1 of \cite{SLB}) that
$$\exists L>0,\ \ \forall n\ge 0,\ \bar\lambda(m_u(F_{\cdot})< \beta^n )\le L\beta^{\frac n {d_u}}  \, . $$
\end{proof}

\section{Proof of Theorems \ref{EmpLp} and \ref{Kiefer}}\label{lastsec}
\setcounter{equation}{0}
In this section, $C$ is a positive constant which may vary from lines to lines, and
the notation
 $a_n \ll b_n$ means that there exists a numerical constant $C$ not
depending on $n$ such that  $a_n \leq  Cb_n$, for all positive integers $n$.

\begin{proof}[Proof of Theorem \ref{EmpLp}]
The proof is based on Proposition \ref{PIFBanach}
of Section \ref{probresults}, which gives sufficient
conditions for the weak invariance principle in 2-smooth Banach spaces.

Let $Y_i(s)= {\bf 1}_{f \circ T^i \leq s}-F(s)$
and let ${\mathcal F}_i$ be the filtration introduced in Section \ref{ineqauto}. 
Note first that, for $ 2\leq p < \infty$, the space
${\mathbb L}^p$ is 2-smooth and $p$-convex (see \cite{Pi}). Moreover it has a Schauder basis (and even an
unconditional basis).

Hence it suffices to check (\ref{condp0}) of Proposition \ref{PIFBanach}.
According to Lemma 6.1 of \cite{DeMePe} (with $b_k=1$), there exists a positive constant $C$ such that
\begin{align*}
  \sum_{k=1}^\infty \| \|P_{-k}(Y_0)\|_{{\mathbb L}^p}\|_2
  &\leq C\sum_{k=1}^\infty \Big(\frac{1}{k} \sum_{i=k}^\infty
  \| \|P_{-i}(Y_0)\|_{{\mathbb L}^p}\|_2^p\Big)^{1/p}\leq
  C\sum_{k=1}^\infty \Big(\frac{1}{k} \sum_{i=k}^\infty
  \| \|P_{-i}(Y_0)\|_{{\mathbb L}^p}\|_p^p\Big)^{1/p} \, ,\\
  \text{and}
  \sum_{k=-\infty}^0 \| \|P_{-k}(Y_0)\|_{{\mathbb L}^p}\|_2 &
  \leq C\sum_{k=1}^\infty \Big(\frac{1}{k} \sum_{i=k}^\infty
  \| \|P_{i+1}(Y_0)\|_{{\mathbb L}^p}\|_2^p\Big)^{1/p}\leq
  C\sum_{k=1}^\infty \Big(\frac{1}{k} \sum_{i=k}^\infty
  \| \|P_{i+1}(Y_0)\|_{{\mathbb L}^p}\|_p^p\Big)^{1/p} \, .
\end{align*}
Since ${\mathbb L}^p$ is $p$-convex, it follows that
$$
\sum_{i=k}^\infty \| \|P_{-i}(Y_0)\|_{{\mathbb L}^p}\|_p^p \leq
 K \| \|{\mathbb E}(Y_k|{\mathcal F}_{0})\|_{{\mathbb L}^p}\|_p^p\quad
 \text{and} \quad
\sum_{i=k}^\infty
  \| \|P_{i+1}(Y_0)\|_{{\mathbb L}^p}\|_p^p \leq
  K \| \|Y_{-k}-{\mathbb E}(Y_{-k}|{\mathcal F}_{0})\|_{{\mathbb L}^p}\|_p \, ,
$$
for some positive constant $K$. 
Hence (\ref{condp0}) is true as soon as
$$
  \sum_{n\geq 1} \frac{1}{n^{1/p}}
  \| \|{\mathbb E}(Y_n|{\mathcal F}_{0})\|_{{\mathbb L}^p}\|_p < \infty
  \quad \text{and} \quad
  \sum_{n\geq 1} \frac{1}{n^{1/p}}
  \| \|Y_{-n}-{\mathbb E}(Y_{-n}|{\mathcal F}_{0})\|_{{\mathbb L}^p}\|_p
  <\infty \, .
$$
Let us have a look to
\begin{align*}
\| \|{\mathbb E}(Y_n|{\mathcal F}_{0})\|_{{\mathbb L}^p}\|_p
 & = \Big( {\mathbb E} \int_{\mathbb R} |F_{f \circ T^n|{\mathcal F}_0}(t)-F(t)|^p dt \Big)^{1/p}\\
 & \leq \Big( {\mathbb E} \int_{\mathbb R}  |F_{f \circ T^n|{\mathcal F}_0}(t)-F(t)| dt \Big)^{1/p}
 \, ,
\end{align*}
with $F_{f \circ T^n|{\mathcal F}_0}(t):={\mathbb P}(f \circ T^n\le t|{\mathcal F}_0)$.
Now
$$
\int_{\mathbb R}  |F_{f \circ T^n|{\mathcal F}_0}(t)-F(t)| dt=
\sup_{g \in \Lambda_1}
\Big |{\mathbb E}(g\circ f\circ T^n|{\mathcal F}_0)-{\mathbb E}(g \circ f)\Big |\, ,
$$
where $\Lambda_1$ is the set of $1$-lipschitz functions.
Hence, since $\omega_{(s,e)}(g\circ f,\cdot)$ is smaller than $\omega_{(s,e)}(f, \cdot)$,
it follows from (\ref{THM2b}) and (\ref{THM2c}) of Theorem \ref{THM2} that
$$
\| \|{\mathbb E}(Y_n|{\mathcal F}_{0})\|_{{\mathbb L}^p}\|_p
 \leq \Big( {\mathbb E}\Big(
\sup_{g \in \Lambda_1}
\Big |{\mathbb E}(g\circ f\circ T^n|{\mathcal F}_0)-{\mathbb E}(g \circ f)\Big | \Big )
\Big)^{1/p}
\leq
C((\omega_{(s,e)}(f,\zeta^n))^{1/p}
      + \Vert f\Vert_\infty^{1/p}\xi^{n/p})\, ,
$$
by noticing that we can replace $\Lambda_1$ by the set of $g\in\Lambda_1$ such that $g\circ f(0)=0$. 
In the same way, due to (\ref{THM2a}) of Theorem \ref{THM2}, we have
$$
\| \|Y_{-n}-{\mathbb E}(Y_{-n}|{\mathcal F}_{0})\|_{{\mathbb L}^p}\|_p
\leq C(\omega_{(u)}
     (f,\rho_u^n))^{1/p}\, .
$$
The result follows.
\end{proof}

\begin{proof}[Proof of Theorem \ref{Kiefer}]
Our aim is to apply the tightness criterion  given in Proposition \ref{p2}.
Let $X_i=f \circ T^i$ and let ${\mathcal F}_i$ be the filtration 
defined in Section \ref{ineqauto}. We  need the following upper bounds.  

\begin{lem} \label{lmamajcov}
Let $g_{s,t}(v)={\bf 1}_{v \leq t}-{\bf 1}_{v \leq s}$, and let $P$ be the image
measure of $\bar \lambda$ by $f$.
Under the assumptions of Theorem \ref{Kiefer}, we have, for any $\beta>1$,
$$
  \sum_{k=0}^n |{\mathrm{Cov}}(g_{s,t}(X_0), g_{s,t}(X_k))|\ll \|g_{s,t}\|_{P,1}^{(\beta+ \alpha -1)/(\beta+\alpha)}\sum_{k=0}^n \frac{1}{(k+1)^{a \alpha/(\beta+\alpha)}} \, .
$$
\end{lem}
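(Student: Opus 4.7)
The plan is to combine two complementary bounds on $|{\mathrm{Cov}}(g_{s,t}(X_0),g_{s,t}(X_k))|$ and then interpolate. Since $|g_{s,t}|\le 1$, the inequality ${\mathbb E}|g_{s,t}(X_0)|^2\le\|g_{s,t}\|_{P,1}$ together with Cauchy--Schwarz gives the trivial bound $|{\mathrm{Cov}}(g_{s,t}(X_0),g_{s,t}(X_k))|\le 2\|g_{s,t}\|_{P,1}$ for every $k\ge 0$. The nontrivial ingredient is a decorrelation bound $|{\mathrm{Cov}}(g_{s,t}(X_0),g_{s,t}(X_k))|\ll (k+1)^{-a\alpha}$ (valid for $k\ge 1$).

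For this decorrelation bound, the crucial point is to smooth the composition $g_{s,t}\circ f$ directly on ${\mathbb T}^d$ rather than smooth $g_{s,t}$ on ${\mathbb R}^\ell$ and compose with $f$, since a $1/h$-Lipschitz approximation of $g_{s,t}$ composed with $f$ inherits the slow logarithmic modulus of $f$ and produces only $k^{-a\alpha/(1+\alpha)}$ after optimising $h$. For $h\in(0,1)$, let $\tilde\varphi_h:=\eta_h*(g_{s,t}\circ f)$ where $\eta_h$ is a standard smooth non-negative mollifier of scale $h$ on ${\mathbb T}^d$; then $\|\tilde\varphi_h\|_\infty\le 1$ and $\mathrm{Lip}(\tilde\varphi_h)\le C/h$. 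The key approximation estimate
\[
  \|\tilde\varphi_h-g_{s,t}\circ f\|_{L^1(\bar\lambda)}\ll\omega(f,h)^\alpha\ll|\log h|^{-a\alpha}
\]
follows by observing that, for $\|\bar u\|\le h$, the set $\{\bar x:g_{s,t}\circ f(\bar x+\bar u)\ne g_{s,t}\circ f(\bar x)\}$ is contained in $\bigcup_{i=1}^\ell\big(\{|f_i-s_i|\le\omega(f,h)\}\cup\{|f_i-t_i|\le\omega(f,h)\}\big)$, whose $\bar\lambda$-measure is $\ll\omega(f,h)^\alpha$ by the H\"older continuity (assumed in Theorem \ref{Kiefer}) of the distribution functions of the $f_i$'s.

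Applying Proposition \ref{decorr} to $\tilde\varphi_h-\bar\lambda(\tilde\varphi_h)$ yields $|{\mathrm{Cov}}(\tilde\varphi_h,\tilde\varphi_h\circ T^k)|\ll h^{-1}\xi_0^k$, so for every $h\in(0,1)$,
\[
  |{\mathrm{Cov}}(g_{s,t}(X_0),g_{s,t}(X_k))|\ll h^{-1}\xi_0^k+|\log h|^{-a\alpha}.
\]
Taking $h=\xi_0^{k/2}$ makes the first term exponentially small in $k$ and the second of order $k^{-a\alpha}$, giving the decorrelation bound for $k\ge 1$. Combining with the trivial bound yields $|{\mathrm{Cov}}(g_{s,t}(X_0),g_{s,t}(X_k))|\ll\min\big(\|g_{s,t}\|_{P,1},(k+1)^{-a\alpha}\big)$ for all $k\ge 0$ (at $k=0$ the minimum equals $\|g_{s,t}\|_{P,1}$, which is at most $\|g_{s,t}\|_{P,1}^{(\beta+\alpha-1)/(\beta+\alpha)}$ since $\|g_{s,t}\|_{P,1}\le 1$). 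The elementary inequality $\min(A,B)\le A^\theta B^{1-\theta}$, applied with $\theta=(\beta+\alpha-1)/(\beta+\alpha)$ so that $1-\theta=1/(\beta+\alpha)$, then gives
\[
  |{\mathrm{Cov}}(g_{s,t}(X_0),g_{s,t}(X_k))|\ll\|g_{s,t}\|_{P,1}^{(\beta+\alpha-1)/(\beta+\alpha)}(k+1)^{-a\alpha/(\beta+\alpha)},
\]
and summation over $k\in\{0,\dots,n\}$ concludes. The main technical obstacle is obtaining the $k^{-a\alpha}$ decorrelation rate; this is precisely where smoothing on the torus (rather than on ${\mathbb R}^\ell$) pays off, because Proposition \ref{decorr} provides exponential decay in $k$ at only a polynomial-in-$1/h$ cost, which lets the H\"older regularity of the marginals of $f$ govern the residual.
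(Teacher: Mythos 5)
Your proof is correct, but it follows a genuinely different route from the paper's. The paper regularizes $g_{s,t}$ on ${\mathbb R}^\ell$ (replacing each indicator $h_u$ by an $\varepsilon^{-1}$-Lipschitz function $h_{u,\varepsilon}$), splits the covariance at the intermediate time $[k/2]$ of the Lind filtration, and controls the two pieces with the conditional-expectation estimates of Theorem \ref{THM2}; the factor $\|g_{s,t}\|_{P,1}$ is threaded through every term via H\"older's inequality with the auxiliary exponent $\beta$, and $\varepsilon$ is optimized at the end as a function of both $k$ and $\|g_{s,t}\|_{P,1}$, namely $\varepsilon=\|g_{s,t}\|_{P,1}^{1/(\alpha+\beta)}k^{-a\beta/(\alpha+\beta)}$. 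You instead mollify $g_{s,t}\circ f$ directly on the torus, invoke only the Lipschitz decorrelation of Proposition \ref{decorr}, and obtain two clean bounds, $|{\mathrm{Cov}}|\ll\|g_{s,t}\|_{P,1}$ and $|{\mathrm{Cov}}|\ll (k+1)^{-a\alpha}$, which you combine only at the very end by $\min(A,B)\le A^{\theta}B^{1-\theta}$ with $\theta=(\beta+\alpha-1)/(\beta+\alpha)$; this confines the role of $\beta$ to a single interpolation step and reproduces exactly the exponents of the statement. Your observation that one must smooth on ${\mathbb T}^d$ rather than on ${\mathbb R}^\ell$ to reach the full rate $k^{-a\alpha}$ --- the $L^1$ mollification error being governed by the H\"older regularity of the marginal distribution functions of the $f_i$ rather than by the Lipschitz constant of a smoothed $g_{s,t}$ multiplied by $\omega(f,\cdot)$ --- is the right technical point, and the resulting argument is more modular and more elementary. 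What the paper's heavier filtration-based route buys is uniformity with the companion Lemma \ref{lmacalculcoeff}, where genuine ${\mathbb L}^p$ bounds on conditional expectations are required and a plain covariance decorrelation statement such as Proposition \ref{decorr} would not suffice; for the present lemma taken in isolation, your argument is a valid and arguably cleaner substitute.
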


\begin{lem} \label{lmacalculcoeff}
Under the assumptions of Theorem \ref{Kiefer}, we have, for any $p \geq 1$,
\begin{align*}
\Vert {\mathbb{E}}%
_{0}  (  g_{s,t}(X_{k})) - {\mathbb E} (g_{s,t} (X_k)  ) \Vert_{p} &\ll k^{- a \alpha  / ( \alpha + p )}\\
\Vert g_{s,t}(X_{0}) - {\mathbb{E}}%
_{k}( g_{s,t}(X_{0}))\Vert_{p} &\ll k^{- a \alpha  / ( \alpha + p )}\, ,
\end{align*}
and, for any $p \geq 2$, 
$$
A(g_{s,t}(X)-{\mathbb E}(g_{s,t}(X)), j) \ll j^{-2 a  \alpha  / (2 \alpha + p )} \, ,
$$
where the coefficient $A(g_{s,t}(X)-{\mathbb E}[g_{s,t}(X)], j)$ is defined in (\ref{notalambda}). The constants involved in the symbol $\ll$ do not depend on $(s,t)$. 
\end{lem}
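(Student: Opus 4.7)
The central idea is to smooth $g_{s,t}$: fix a parameter $\delta\in(0,1)$ and let $\varphi_\delta$ be a $(C/\delta)$-Lipschitz function on $\mathbb R^\ell$ that coincides with $g_{s,t}$ outside a $\delta$-neighbourhood of the jump set (obtained by replacing each coordinate indicator $\mathbf 1_{v_i\leq s_i}$ by a piecewise-affine interpolant on scale $\delta$). The H\"older assumption on the distribution functions of the $f_i$'s then yields, uniformly in $(s,t)$,
\begin{equation*}
\|g_{s,t}(X_0)-\varphi_\delta(X_0)\|_q\leq C\delta^{\alpha/q}\qquad\text{for every }q\geq 1,
\end{equation*}
because the integrand is bounded and supported on $\bigcup_i\{|f_i-s_i|\leq\delta\}\cup\{|f_i-t_i|\leq\delta\}$. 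Simultaneously, the Lipschitz bound together with $\omega(f,\cdot)\leq C|\log\cdot|^{-a}$ give $\omega_{(s,e)}(\varphi_\delta\circ f,\eta)\leq C\delta^{-1}|\log\eta|^{-a}$ and similarly for $\omega_{(u)}$.

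For the first estimate, $T$-invariance reduces $\|\mathbb E_0(g_{s,t}(X_k))-\mathbb E g_{s,t}(X_k)\|_p$ to $\|\mathbb E[g_{s,t}(f)|\mathcal F_{-k}]-\mathbb E g_{s,t}(f)\|_p$. The decomposition $g_{s,t}=\varphi_\delta+(g_{s,t}-\varphi_\delta)$ combined with Remark~\ref{remarqueTH2} yields
\begin{equation*}
\bigl\|\mathbb E[g_{s,t}(f)|\mathcal F_{-k}]-\mathbb E g_{s,t}(f)\bigr\|_p\leq C\bigl(\delta^{\alpha/p}+\xi^{k/p}+\delta^{-1}k^{-a}\bigr),
\end{equation*}
and the choice $\delta=k^{-ap/(\alpha+p)}$ produces the announced rate $k^{-a\alpha/(\alpha+p)}$. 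The second estimate is strictly analogous, but uses (\ref{THM2a}) to control $\|\varphi_\delta(f)-\mathbb E[\varphi_\delta(f)|\mathcal F_k]\|_\infty\leq\omega_{(u)}(\varphi_\delta\circ f,\rho_u^k)\leq C\delta^{-1}k^{-a}$ in place of Remark~\ref{remarqueTH2}.

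The bound on $A(Y,j)$ is the main obstacle. For the centered quantity, $T$-invariance brings us to $\|\mathbb E_{-j}(Y_0Y_i)-\mathbb E(Y_0Y_i)\|_{p/2}$ for $0\leq i\leq j$, where $Y_n=g_{s,t}(X_n)-\mathbb E g_{s,t}(X_n)$. Setting $\tilde Y_n=\varphi_\delta(X_n)-\mathbb E\varphi_\delta(X_n)$ and using $|Y_n|,|\tilde Y_n|\leq 2$, the $L^{p/2}$-regularisation error on the product is of size $C\delta^{2\alpha/p}$. The smoothed product $\Phi=\tilde Y_0\tilde Y_i$ is bounded by 4 and, viewed as a function on $\mathbb T^d$ via $\Phi(x)=(\varphi_\delta\circ f)(x)(\varphi_\delta\circ f)(T^ix)-\cdots$, satisfies
\begin{equation*}
\omega_{(s,e)}(\Phi,\eta)\leq C\delta^{-1}\bigl(\omega(f,\eta)+\omega(f,Ki^{d_e}\eta)\bigr),
\end{equation*}
thanks to the polynomial growth $Ki^{d_e}$ of $T^i$ on $E_s\oplus E_e$ coming from (\ref{hyp-b}). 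For $i\leq j$ and $\eta=\zeta^j$, one has $|\log(Ki^{d_e}\zeta^j)|\geq cj$, so both terms are $O(j^{-a})$; Remark~\ref{remarqueTH2} applied at exponent $p/2$ then contributes $C\delta^{-1}j^{-a}$. The balance $\delta=j^{-ap/(2\alpha+p)}$ yields $j^{-2a\alpha/(2\alpha+p)}$.

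For the uncentered quantity, the $T$-invariance identity $\|\mathbb E_0(Y_iY_{j+i})\|_{p/2}=\|\mathbb E_{-i}(Y_0Y_j)\|_{p/2}$ together with the $L^{p/2}$-contraction of conditional expectation (which is monotone in the $\sigma$-algebra) reduces the supremum over $i\geq 0$ to the case $i=0$, namely $\|\mathbb E_0(Y_0Y_j)\|_{p/2}$. Splitting $Y_0=\mathbb E_k(Y_0)+(Y_0-\mathbb E_k(Y_0))$ at an intermediate $k\in(0,j)$, the first piece gives, by the tower property and $\mathcal F_k$-measurability of $\mathbb E_k(Y_0)$, the contribution $\|\mathbb E_0(\mathbb E_k(Y_0)\mathbb E_k(Y_j))\|_{p/2}\leq 2\|\mathbb E_k(Y_j)\|_{p/2}=2\|\mathbb E_{-(j-k)}(Y_0)\|_{p/2}$, which is $O((j-k)^{-2a\alpha/(2\alpha+p)})$ by the $L^{p/2}$ version of the first estimate; the second piece is bounded by $2\|Y_0-\mathbb E_k(Y_0)\|_{p/2}=O(k^{-2a\alpha/(2\alpha+p)})$ by the $L^{p/2}$ version of the second estimate. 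Choosing $k=\lfloor j/2\rfloor$ delivers the claimed rate, and the uniformity in $(s,t)$ is automatic since all constants depend only on $f$ and on the H\"older constants of the $F_{f_i}$.
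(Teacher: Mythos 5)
Your proof is correct, and for the first two estimates and for the centered part of $A(\cdot,j)$ it follows essentially the same path as the paper: smooth $g_{s,t}$ at scale $\varepsilon$, pay $C\varepsilon^{\alpha/q}$ via the H\"older continuity of the distribution functions, control the smoothed part by Remark \ref{remarqueTH2} (resp.\ (\ref{THM2a}), resp.\ (\ref{THM2d}) applied to the product $\varphi\cdot\varphi\circ T^i$ with the $Ki^{d_e}$ growth from (\ref{hyp-b})), and optimize in $\varepsilon$. The one place where you genuinely diverge is the term $\sup_{i\geq 0}\Vert{\mathbb E}_0(Y_iY_{j+i})\Vert_{p/2}$. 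The paper treats it by splitting $g_{s,t,\varepsilon}(X_i)$ along ${\mathcal F}_{i+[j/2]}$ and rerunning the decorrelation argument of Lemma \ref{lmamajcov} in ${\mathbb L}^{p/2}$. You instead observe that ${\mathbb E}_0(Y_iY_{j+i})$ has the same law as ${\mathbb E}_{-i}(Y_0Y_j)={\mathbb E}_{-i}({\mathbb E}_0(Y_0Y_j))$, so the supremum over $i$ is attained at $i=0$ by the conditional Jensen contraction, and then you bound $\Vert{\mathbb E}_0(Y_0Y_j)\Vert_{p/2}$ by splitting $Y_0={\mathbb E}_{[j/2]}(Y_0)+(Y_0-{\mathbb E}_{[j/2]}(Y_0))$ and invoking the first two estimates of the lemma at exponent $p/2$ (which indeed give the exponent $a\alpha/(\alpha+p/2)=2a\alpha/(2\alpha+p)$). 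This bootstrap is clean, avoids a second smoothing for that term, and all the intermediate steps (boundedness $|Y_n|\leq 2$, the tower property ${\mathbb E}_0({\mathbb E}_k(Y_0)Y_j)={\mathbb E}_0({\mathbb E}_k(Y_0){\mathbb E}_k(Y_j))$, stationarity $\Vert{\mathbb E}_k(Y_j)\Vert_{p/2}=\Vert{\mathbb E}_0(Y_{j-k})\Vert_{p/2}$) check out; the price is nil, since the rates match. Both routes are valid; yours makes the third inequality a formal consequence of the first two plus the product-modulus bound, which is arguably the more economical organization.
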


Let us continue the proof of Theorem \ref{Kiefer} with the help
of these lemmas.
From Proposition \ref{consdirect}, Lemma \ref{lmamajcov} and Lemma \ref{lmacalculcoeff}, we derive that, for  $p>2$,
\begin{multline*}
\Big \| \max_{1\leq k\leq n} |S_{k} (g_{s,t}) | \Big \|_{p}\ll n^{1/2}
\Big ( \|g_{s,t}\|_{P,1}^{(\beta+ \alpha -1)/(\beta+\alpha)}\sum_{k=1}^n \frac{1}{k^{a \alpha/(\beta+\alpha)}} \Big )^{1/2}%
\,\\
+n^{1/p}\sum_{k=1}^{2n} \frac{k^{-a\alpha/(\alpha+p)}}{k^{1/p}}
+ n^{1/p}\Big (\sum_{k=1}^{n}\frac{k^{-2 a  \alpha  / (2 \alpha + p )}}{k^{(2/p)-1}}(\log k)^{\gamma
}\Big )^{1/2},
\end{multline*}
where $\gamma$ can be taken $\gamma=0$ for $2<p\leq3$ and $\gamma>p-3$ for
$p>3$. Therefore if
$$
a > \max \Big ( 1 + \frac{\beta}{\alpha} , \frac{(p-1) ( 2 \alpha + p)} {p \alpha} \Big ) \, ,
$$
then setting $r= 2 ( \beta + \alpha) /(\beta + \alpha -1)$, we  get that
$$
\Big \| \max_{1\leq k\leq n} |S_{k} (g_{s,t}) | \Big \Vert_{p} \ll n^{1/2}
 \|g_{s,t}\|_{P,1}^{1/r}
+ n^{1/p} \, .
$$
We shall apply the tightness criterion given in Proposition \ref{p2}.
Since ${\mathcal N}_{P,1}(x, {\mathcal F}) \leq C x^{-\ell}$ for the class ${\mathcal F} = \{u \mapsto {\bf 1}_{u \leq t} , t \in {\mathbb R}^\ell  \}$, we get 
\begin{equation}\label{ent1}
\int_0^1 x^{(1-r)/r} ({\mathcal N}_{P,1}(x, {\mathcal F}))^{1/p} dx \leq C  \int_0^1 x^{(1-r)/r} x^{-\ell/p} dx < \infty ,
\end{equation}
 as soon as $ p > 2 \ell ( \beta+ \alpha) /( \beta + \alpha -1)$. Moreover
\begin{equation}\label{ent2}
\lim_{x \rightarrow 0} x^{p-2}  {\mathcal N}_{P,1}(x, {\mathcal F}) =0
\end{equation}
as soon as $ p >2 + \ell $.

Hence if $p \in ]2,  2 \ell ( 1 + \alpha^{-1})] $, we take $\beta=(2\alpha \ell + (1-\alpha)p)/(p-2\ell)+ \varepsilon$ for some positive 
and small enough $\varepsilon$ (so that $\beta>1$), and
we infer that (\ref{ent1}) and (\ref{ent2}) hold provided that $p>\max(\ell+2, 2\ell)$ and
$$
a > k_{\ell, \alpha}(p)=\max \Big (\frac{p}{\alpha (p-2\ell)} , \frac{(p-1) ( 2 \alpha + p)} {p \alpha} \Big ) \, .
$$
Taking the minimum in $p\geq \max(\ell+2, 2\ell)$ on the right hand, we obtain
that (\ref{ent1}) and (\ref{ent2}) hold provided that $a>a(\ell, \alpha)$, where
$a(\ell, \alpha)$ has been defined in (\ref{defaellalpha}). 

We infer that the conditions (\ref{equiibis}) and (\ref{equiiter})
of Proposition \ref{p2} hold for this choice of $a$, which proves the tightness
of the empirical process (see \cite{VW},  page 227).

 To prove the weak convergence of the finite
dimensional distribution, it suffices to show that for any $(\alpha_1, \dots, \alpha_m) \in {\mathbb R}^m$ and any $(s_1, \dots, s_m) \in ({\mathbb R}^{\ell} )^m$, the process 
$$
\Big \{ n^{-1/2}\sum_{i=1}^m \alpha_i S_{[nt]} (s_i) \, , \, t \in [0,1] \Big \} 
\quad 
\text{converges in distribution in $D_{\mathbb R}([0,1])$ to $W$,} 
$$
where $W$ is a Wiener process such that ${\rm Cov} (W_{t_1}, W_{t_2}) = 
\min ( t_1, t_2 ) \sum_{i=1}^m \sum_{j=1}^m \alpha_i \alpha_j \Lambda (s_i, s_j)$. Note that $\sum_{i=1}^m \alpha_i S_{[nt]} (s_i)=\sum_{k=1}^{[nt]} Y_k$ where $Y_k = \sum_{i=1}^m \alpha_i  \big ( {\bf 1}_{ X_k \leq s_i  } -F(s_i) \big ) $. Therefore, the above convergence in distribution will follow from Proposition 5 in \cite{DeMeVo} if we can prove that 
\beq \label{condipnonadap}
\sum_{k=1}^{\infty} \frac{\Vert {\mathbb E}_0 ( Y_k ) \Vert_2}{\sqrt k } < \infty  \, \text{ and } \, \sum_{k=1}^{\infty} \frac{\Vert Y_{0} -  {\mathbb E}_k ( Y_{0} ) \Vert_2}{\sqrt k } < \infty \, .
\eeq
By the triangle inequality, it suffices to prove that \eqref{condipnonadap} holds with $ {\bf 1}_{ X_k \leq s  } -F(s) $ in place of $Y_k$. This follows from Lemma \ref{lmacalculcoeff} as soon as $a>(\alpha+2)/2 \alpha$. 
\end{proof}

\begin{proof}[Proof of Lemma \ref{lmamajcov}] We prove the results for $\ell=2$. The general case can be proved in the same way.
For $u \in {\mathbb R}$, let
$h_u(x)={\bf 1}_{x \leq u}$. By definition of $g_{s,t}$,
$$
g_{s,t}=h_{t_1}\otimes h_{t_2} - h_{s_1}\otimes h_{s_2}\, ,
$$
with the notation $(G_1\otimes G_2)(u_1,u_2):=G_1(u_1)G_2(u_2)$.
For $\varepsilon>0$, let
$$h_{u, \varepsilon}(x)={\bf 1}_{x \leq u} -
\varepsilon^{-1}(x-u-\varepsilon){\bf 1}_{u<x \leq u+\varepsilon}\, ,$$
and note that $h_{u, \varepsilon}$ is Lipschitz with Lipschitz constant $\varepsilon^{-1}$.
We have the decomposition
$ h_{t_1}\otimes h_{t_2}=h_{t_1, \varepsilon}\otimes h_{t_2, \varepsilon}
+ R_{t, \varepsilon}$, where
$$R_{t, \varepsilon}= (h_{t_1}-h_{t_1,\varepsilon})\otimes h_{t_2} +
h_{t_1, \varepsilon}\otimes (h_{t_2}-h_{t_2, \varepsilon})\, .
$$
Setting
$$
g_{s,t, \varepsilon}=h_{t_1, \varepsilon}\otimes h_{t_2, \varepsilon} -
 h_{s_1, \varepsilon}\otimes h_{s_2, \varepsilon}\, ,
$$
we obtain the decomposition
\begin{equation}\label{dec}
g_{s,t}=g_{s,t, \varepsilon} + H_{s,t, \varepsilon}, \quad \text{with} \quad  H_{s,t, \varepsilon}=
R_{t, \varepsilon}- R_{s, \varepsilon}\, .
\end{equation}
On the other hand, we have 
$$
\mathrm {Cov}(g_{s,t}(X_0), g_{s,t}(X_k))
=
{\mathbb E}((g_{s,t}(X_0)-
{\mathbb E}(g_{s,t}(X_0)|{\mathcal F}_{[k/2]}))
g_{s,t}(X_k))
+
\mathrm {Cov}({\mathbb E}(g_{s,t}(X_0)|{\mathcal F}_{[k/2]}), g_{s,t}(X_k))\, .
$$
Using (\ref{dec}), we get
\begin{multline}\label{zero}
{\mathbb E}((g_{s,t}(X_0)-
{\mathbb E}(g_{s,t}(X_0)|{\mathcal F}_{[k/2]}))
g_{s,t}(X_k))={\mathbb E}((g_{s,t, \varepsilon}(X_0)-
{\mathbb E}(g_{s,t, \varepsilon}(X_0)|{\mathcal F}_{[k/2]}))
g_{s,t}(X_k)) \\
+{\mathbb E}((H_{s,t, \varepsilon}(X_0)-
{\mathbb E}(H_{s,t, \varepsilon}(X_0)|{\mathcal F}_{[k/2]}))
g_{s,t}(X_k))\, .
\end{multline}
Applying (\ref{THM2a}) of Theorem \ref{THM2}, we infer that
\begin{equation}\label{un}
|{\mathbb E}((g_{s,t, \varepsilon}(X_0)-
{\mathbb E}(g_{s,t, \varepsilon}(X_0)|{\mathcal F}_{[k/2]}))
g_{s,t}(X_k))| \leq C \|g_{s,t}\|_{P,1} 
 \varepsilon^{-1} \omega_{(u)}(f,\rho_{u}^{[k/2]}) \, .
\end{equation}
Applying H\"older's inequality, and using the fact that the distributions functions of
$f_1$ and $f_2$ are H\"older continuous of order $\alpha$, we get 
\begin{equation}\label{deux}
|{\mathbb E}((H_{s,t, \varepsilon}(X_0)-
{\mathbb E}(H_{s,t, \varepsilon}(X_0)|{\mathcal F}_{[k/2]}))
g_{s,t}(X_k))| \leq C\|g_{s,t}\|_{P, 1}^{(\beta-1)/\beta} \varepsilon^{\alpha/\beta} \, .
\end{equation}
Using (\ref{dec}) again, we also have 
\begin{multline}\label{trois}
\mathrm {Cov}({\mathbb E}(g_{s,t}(X_0)|{\mathcal F}_{[k/2]}), g_{s,t}(X_k))
=\mathrm {Cov}({\mathbb E}(g_{s,t}(X_0)|{\mathcal F}_{[k/2]}), g_{s,t, \varepsilon}(X_k))
\\
+\mathrm {Cov}({\mathbb E}(g_{s,t}(X_0)|{\mathcal F}_{[k/2]}), H_{s,t, \varepsilon}(X_k))
\, .
\end{multline}
To handle the first term in the right-hand side, we set $g^{(0)}_{s,t, \varepsilon}(X_0) = g_{s,t, \varepsilon}(X_0)- \E (g_{s,t, \varepsilon}(X_0))$ and  note first that 
\begin{align*}
\mathrm {Cov}({\mathbb E}(g_{s,t}(X_0)|{\mathcal F}_{[k/2]}), g_{s,t, \varepsilon}(X_k)) &= \E ({\mathbb E}(g_{s,t}(X_{-k})|{\mathcal F}_{[k/2]-k})   g^{(0)}_{s,t, \varepsilon}(X_0)  ) \\
& =\E (g_{s,t}(X_{-k}) {\mathbb E}(  g^{(0)}_{s,t, \varepsilon}(X_0) |{\mathcal F}_{[k/2]-k} ))\, .
\end{align*}
Therefore, considering the set ${\mathcal V}_{n}$ introduced in Theorem \ref{THM2}, it follows that 
\begin{multline*}
|\mathrm {Cov}({\mathbb E}(g_{s,t}(X_0)|{\mathcal F}_{[k/2]}), g_{s,t, \varepsilon}(X_k))|\leq 
 2 \Vert g_{s,t, \varepsilon}(X_0)  \Vert_{\infty} \E ( |g_{s,t}(X_{-k}) | {\bf 1}_{{\mathcal V}_{k-[k/2]}^c})   \\
 +  \E ( | g_{s,t}(X_{-k}) |  |{\mathbb E}(  g_{s,t, \varepsilon}(X_0) |{\mathcal F}_{[k/2]-k} ) - \E (  g_{s,t, \varepsilon}(X_0) ) | {\bf 1}_{{\mathcal V}_{k-[k/2]}}) \, .
\end{multline*}
On one hand, applying (\ref{THM2b}) of Theorem \ref{THM2} with $\varphi=  g_{s,t, \varepsilon} \circ f$ and using the fact that, since $h_{u,\varepsilon}$ is Lipschitz with Lipschitz constant $\varepsilon^{-1}$, $\omega_{(s,e)}(g_{s,t, \varepsilon} \circ f,\zeta^{[k/2]}) \leq 4 \varepsilon^{-1} \omega_{(s,e)}(f,\zeta^{[k/2]})$, we infer that
\begin{multline*}
 \E ( | g_{s,t}(X_{-k}) |  |{\mathbb E}(  g_{s,t, \varepsilon}(X_0) |{\mathcal F}_{[k/2]-k} ) - \E (  g_{s,t, \varepsilon}(X_0) ) | {\bf 1}_{{\mathcal V}_{k-[k/2]}}) \\
 \leq 
C \|g_{s,t}\|_{P,1}(\xi^{[k/2]}+   \varepsilon^{-1} \omega_{(s,e)}(f,\zeta^{[k/2]}))\, .
\end{multline*}
On the other hand, since $\bar\lambda({\mathcal V}_{k-[k/2]}^c)\leq C \xi^{[k/2]}$,
applying H\"older's inequality, we get 
\begin{equation*}\label{quatre}
{\mathbb E}(|(g_{s,t}(X_{-k})|
{\bf 1}_{{\mathcal V}_{k-[k/2]}^c})
\leq
C \|g_{s,t}\|_{P,1}^{(\beta+\alpha-1)/(\beta+\alpha)} \xi^{[k/2]/(\beta+\alpha)} \, .
\end{equation*}
So, overall, 
\begin{multline}\label{troisbis}
|\mathrm {Cov}({\mathbb E}(g_{s,t}(X_0)|{\mathcal F}_{[k/2]}), g_{s,t, \varepsilon}(X_k))|\leq
C \|g_{s,t}\|_{P,1}^{(\beta+\alpha-1)/(\beta+\alpha)} \xi^{[k/2]/(\beta+\alpha)} \\
+
C \|g_{s,t}\|_{P,1}(\xi^{[k/2]}+   \varepsilon^{-1} \omega_{(s,e)}(f,\zeta^{[k/2]}))\, .
\end{multline}
We handle now the second term in the right-hand side of \eqref{trois}. Applying H\"older's inequality again, and using that the distributions functions of
$f_1$ and $f_2$ are H\"older continuous of order $\alpha$, we get 
\begin{equation}\label{cinq}
|\mathrm {Cov}({\mathbb E}(g_{s,t}(X_0)|{\mathcal F}_{[k/2]}), H_{s,t, \varepsilon}(X_k))|
\leq C\|g_{s,t}\|_{P, 1}^{(\beta-1)/\beta} \varepsilon^{\alpha/\beta} \, .
\end{equation}
Therefore, starting from \eqref{trois} and considering \eqref{troisbis} and \eqref{cinq}, it follows that
\begin{multline}\label{troister}
|\mathrm {Cov}({\mathbb E}(g_{s,t}(X_0)|{\mathcal F}_{[k/2]}), g_{s,t}(X_k)) | \leq C \|g_{s,t}\|_{P,1}^{(\beta+\alpha-1)/(\beta+\alpha)} \xi^{[k/2]/(\beta+\alpha)} \\
+
C \|g_{s,t}\|_{P,1}(\xi^{[k/2]}+   \varepsilon^{-1} \omega_{(s,e)}(f,\zeta^{[k/2]}))C\|g_{s,t}\|_{P, 1}^{(\beta-1)/\beta} \varepsilon^{\alpha/\beta} \, .
\end{multline}

Gathering the  bounds (\ref{zero}), (\ref{un}), (\ref{deux}) and
(\ref{troister}), it follows that
$$
|\mathrm {Cov}(g_{s,t}(X_0), g_{s,t}(X_k))|
\leq C\Big(\|g_{s,t}\|_{P, 1}\frac{1}{\varepsilon k^a}+ \|g_{s,t}\|_{P, 1}^{(\beta-1)/\beta} \varepsilon^{\alpha/\beta} +
\|g_{s,t}\|_{P,1}^{(\beta+\alpha-1)/(\beta+\alpha)} \xi^{[k/2]/(\beta+\alpha)}
\Big)
\, .
$$
Taking $\varepsilon=\|g_{s,t}\|_{P, 1}^{1/(\alpha+ \beta)}k^{-a\beta/(\alpha+ \beta)}$, we get 
$$
|\mathrm {Cov}(g_{s,t}(X_0), g_{s,t}(X_k))|
\leq
C\|g_{s,t}\|_{P, 1}^{(\beta+\alpha-1)/(\beta+\alpha)}\Big(\frac{1}{k^{a\alpha/(\alpha + \beta)}}  + \xi^{[k/2]/(\beta+\alpha)}
\Big)\, .
$$
The result follows by summing in $k$.
\end{proof}

\begin{proof}[Proof of Lemma \ref{lmacalculcoeff}]
Using the same notations as in the proof of Lemma \ref{lmamajcov}, and using that the distribution
functions of $f_1$ and $f_2$ are H\"older continuous of order $\alpha$, we obtain 
$$
\Vert {\mathbb{E}}
_{0}  (  g_{s,t}(X_{k})) - {\mathbb E} (g_{s,t} (X_k)  ) \Vert_{p}\leq
\Vert {\mathbb{E}}
_{0}  (  g_{s,t, \varepsilon}(X_{k})) -
{\mathbb E} (g_{s,t, \varepsilon} (X_k)  ) \Vert_{p}
+ C \varepsilon^{\alpha/p}\, .
$$
Recall that the set ${\mathcal V}_{n}$ introduced in Theorem \ref{THM2} is such that
$\bar \lambda({\mathcal V}_n^c)\leq C \xi^n$.
Applying Theorem \ref{THM2} (see (\ref{THM2d})), we obtain 
$$
\Vert {\mathbb{E}}
_{0}  (  g_{s,t, \varepsilon}(X_{k})) -
{\mathbb E} (g_{s,t, \varepsilon} (X_k)  ) \Vert_{p}
\leq C (\varepsilon ^{-1} \omega_{(s,e)}(f,\zeta^k)+ \xi^{k/p})\, .
$$
Consequently
$$
\Vert {\mathbb{E}}
_{0}  (  g_{s,t}(X_{k})) - {\mathbb E} (g_{s,t} (X_k)  ) \Vert_{p}\leq
C\Big( \frac{1}{\varepsilon k^a}+ \varepsilon^{\alpha/p} + \xi^{k/p}\Big)\, .
$$
Choosing $\varepsilon=k^{-ap/(\alpha+p)}$, we obtain 
$$
\Vert {\mathbb{E}}
_{0}  (  g_{s,t}(X_{k})) - {\mathbb E} (g_{s,t} (X_k)  ) \Vert_{p}\leq
C\Big( \frac{1}{k^{a\alpha/(\alpha+p)}} + \xi^{k/p}\Big)\, ,
$$
proving the first inequality.

In the same way
$$
\Vert g_{s,t}(X_{0}) - {\mathbb{E}}
_{k}( g_{s,t}(X_{0}))\Vert_{p}
\leq \Vert g_{s,t, \varepsilon}(X_{0}) - {\mathbb{E}}
_{k}( g_{s,t, \varepsilon}(X_{0}))\Vert_{p}
+ C\varepsilon^{\alpha/p}\, .
$$
Applying (\ref{THM2a}) of  Theorem \ref{THM2}, we obtain 
$$
\Vert g_{s,t}(X_{0}) - {\mathbb{E}}
_{k}( g_{s,t}(X_{0}))\Vert_{p} \leq
C(\varepsilon^{-1}\omega_{(u)}(f,\rho_{u}^k)+ \varepsilon^{\alpha/p})\, .
$$
Since $\omega_{(u)}(f,\rho_{u}^k)\leq C k^{-a}$, the choice $\varepsilon=k^{-ap/(\alpha+p)}$
gives the second inequality.

Let $h^{(0)}(X_i)=h(X_i)-{\mathbb E}(h(X_i))$. To prove the third inequality, we have to bound up
$$\sup_{i \geq 0} \Vert{\mathbb{E}}_{0}(g^{(0)}_{s,t}(X_{i})g^{(0)}_{s,t}(X_{j+i}))\Vert
_{p/2}\quad \text{and} \quad  \sup_{0 \leq i \leq j
}\Vert{\mathbb{E}}_{0}(g^{(0)}_{s,t}(X_{j})
g^{(0)}_{s,t}(X_{j+i}))-{\mathbb{E}}(g^{(0)}_{s,t}(X_{j})
g^{(0)}_{s,t}(X_{j+i}))\Vert
_{p/2} \, .
$$
Using the decomposition (\ref{dec}), and the fact that the distribution functions
of $f_1$ and $f_2$ are H\"older continuous of order $\alpha$, we get 
\begin{equation}\label{one}
\Vert{\mathbb{E}}_{0}(g^{(0)}_{s,t}(X_{i})g^{(0)}_{s,t}(X_{j+i}))\Vert
_{p/2} \leq
\Vert{\mathbb{E}}_{0}(g^{(0)}_{s,t, \varepsilon}(X_{i})
g^{(0)}_{s,t,\varepsilon}(X_{j+i}))\Vert_{p/2} + C\varepsilon^{2\alpha/p}\, ,
\end{equation}
and
\begin{multline}\label{onebis}
\Vert{\mathbb{E}}_{0}(g^{(0)}_{s,t}(X_{j})
g^{(0)}_{s,t}(X_{j+i}))-{\mathbb{E}}(g^{(0)}_{s,t}(X_{j})
g^{(0)}_{s,t}(X_{j+i}))\Vert
_{p/2} \\ \leq
\Vert{\mathbb{E}}_{0}(g^{(0)}_{s,t, \varepsilon}(X_{j})
g^{(0)}_{s,t, \varepsilon}(X_{j+i}))-{\mathbb{E}}(g^{(0)}_{s,t, \varepsilon}(X_{j})
g^{(0)}_{s,t, \varepsilon}(X_{j+i}))\Vert
_{p/2}
+ C\varepsilon^{2\alpha/p}\, .
\end{multline}
Writing
\begin{multline}
\Vert{\mathbb{E}}_{0}(g^{(0)}_{s,t, \varepsilon}(X_{i})
g^{(0)}_{s,t,\varepsilon}(X_{j+i}))\Vert_{p/2}
\leq
\Vert{\mathbb{E}}_{0}((g_{s,t, \varepsilon}(X_{i})-
{\mathbb E}(g_{s,t, \varepsilon}(X_{i})|{\mathcal F}_{i+[j/2]}))
g^{(0)}_{s,t,\varepsilon}(X_{j+i}))\Vert_{p/2}\\
+
\Vert {\mathbb{E}}_{0}(
{\mathbb E}(g_{s,t, \varepsilon}(X_{i})|{\mathcal F}_{i+[j/2]})
g^{(0)}_{s,t,\varepsilon}(X_{j+i}))\Vert_{p/2} \, ,
\end{multline}
and arguing as in Lemma \ref{lmamajcov}, we infer that
\begin{equation}\label{oneter}
\Vert{\mathbb{E}}_{0}(g^{(0)}_{s,t, \varepsilon}(X_{i})
g^{(0)}_{s,t,\varepsilon}(X_{j+i}))\Vert_{p/2}
\leq C \Big( \frac{1}{\varepsilon j^a} + \xi^{[j/2]}\Big)\, .
\end{equation}
From (\ref{one}) and (\ref{oneter}), we obtain the bound
$$
\Vert{\mathbb{E}}_{0}(g^{(0)}_{s,t}(X_{i})g^{(0)}_{s,t}(X_{j+i}))\Vert
_{p/2} \leq C \Big( \frac{1}{\varepsilon j^a} + \varepsilon^{2\alpha/p}
 + \xi^{[j/2]}\Big)\, .
$$
Taking $\varepsilon=j^{-ap/(2\alpha+p)}$, we obtain 
\begin{equation}\label{two}
\sup_{i \geq 0}\Vert{\mathbb{E}}_{0}(g^{(0)}_{s,t}(X_{i})g^{(0)}_{s,t}(X_{j+i}))\Vert
_{p/2} \leq C j^{-2a\alpha/(2\alpha+p)}\, .
\end{equation}

Let $\varphi:=g_{s,t,\varepsilon}\circ f- \bar \lambda(g_{s,t,\varepsilon}\circ f) $.
Applying   Theorem \ref{THM2} (see (\ref{THM2d})), for $i\leq j$,
\begin{multline*}
\Vert{\mathbb E}_0(g^{(0)}_{s,t, \varepsilon}(X_{j})
g^{(0)}_{s,t, \varepsilon}(X_{j+i}))-{\mathbb{E}}(g^{(0)}_{s,t, \varepsilon}(X_{j})
g^{(0)}_{s,t, \varepsilon}(X_{j+i}))\Vert_{p/2}
= \Vert{\mathbb E}(\varphi.\varphi\circ T^i|\mathcal F_{-j}) -
     {\mathbb E}(\varphi .\varphi\circ T^i)\Vert_{p/2}\\
\leq C(\xi^{2j/p}+\omega_{(s,e)}(\varphi.\varphi\circ T^i,\zeta^j))\, .
\end{multline*}
By (\ref{hyp-b}),
$
   \omega_{(s,e)}(\varphi.\varphi\circ T^i,\zeta^j) \leq
     2 \|\varphi\|_\infty \omega_{(s,e)}(\varphi,K\zeta^j j^{d_e}))
     \leq
     4  \omega_{(s,e)}(\varphi,L\zeta_0^j)
$ where $\zeta_0 \in (\zeta, 1)$. Hence, 
\begin{equation}\label{ouf}
\Vert{\mathbb E}_0(g^{(0)}_{s,t, \varepsilon}(X_{j})
g^{(0)}_{s,t, \varepsilon}(X_{j+i}))-{\mathbb{E}}(g^{(0)}_{s,t, \varepsilon}(X_{j})
g^{(0)}_{s,t, \varepsilon}(X_{j+i}))\Vert_{p/2} \leq
C(\xi^{2j/p}+\omega_{(s,e)}(\varphi, L\zeta_0^j))\, .
\end{equation}
Since $\omega_{(s,e)}(\varphi, L\zeta_0^j)\leq \varepsilon^{-1}
\omega_{(s,e)}(f,L\zeta_0^j)\leq
C\varepsilon^{-1}j^{-a}$, we obtain from (\ref{onebis}) and (\ref{ouf}) that
$$
\Vert{\mathbb{E}}_{0}(g^{(0)}_{s,t}(X_{j})
g^{(0)}_{s,t}(X_{j+i}))-{\mathbb{E}}(g^{(0)}_{s,t}(X_{j})
g^{(0)}_{s,t}(X_{j+i}))\Vert
_{p/2}
\leq C\Big(\frac{1}{\varepsilon j^{a}}+\varepsilon^{2\alpha/p}+\xi^{2j/p}\Big) \, .
$$
Taking $\varepsilon=j^{-ap/(2\alpha+p)}$, we obtain 
\begin{equation}\label{three}
\sup_{0\leq i \leq j}
\Vert{\mathbb{E}}_{0}(g^{(0)}_{s,t}(X_{j})
g^{(0)}_{s,t}(X_{j+i}))-{\mathbb{E}}(g^{(0)}_{s,t}(X_{j})
g^{(0)}_{s,t}(X_{j+i}))\Vert
_{p/2} \leq C j^{-2a\alpha/(2\alpha+p)}\, .
\end{equation}
The third inequality of Lemma \ref{lmacalculcoeff}
follows from (\ref{two}), (\ref{three}) and from the definition of
the quantity  $A(g_{s,t}(X)-{\mathbb E}(g_{s,t}(X)),j)$
given in Proposition \ref{consdirect}.
\end{proof}

\section{Additional results for partial sums} \label{adresults}

Let $T$ be an ergodic automorphism of ${\mathbb T}^d$ as defined in the introduction. Let $f$ be a continuous function from  ${\mathbb T}^d$ to ${\mathbb R}$
with modulus of continuity $\omega(f, \cdot)$.

The inequalities given in Theorem \ref{THM2} have been used to prove the
tightness of the sequential empirical process, but they
can be used in many other situations. Let us give three examples of application
to the behavior of the partial sums (\ref{partial}).

\begin{enumerate}
\item {\bf Moment bounds for partial sums}.   Using Corollary \ref{corburk} together with Theorem \ref{THM2} (see also Remark \ref{remarqueTH2}), we infer that if 
\begin{equation}\label{firstcond}
 \sum_{n>0} \frac{\omega(f,\zeta^n)}{\sqrt n} < \infty\, ,
\end{equation}
where $\zeta \in (0,1)$ is defined in Theorem \ref{THM2},
then for any $p>2$, $$
\Big \Vert  \max_{1 \leq k \leq n}  
\Big | \sum_{i=1}^k  (f \circ T^i - \bar{\lambda} (f)) \Big |  \Big \Vert_p \ll n^{1/2} \, .
$$
Clearly, the condition (\ref{firstcond}) is equivalent to the integral condition 
\begin{equation}\label{firstcondbis}
 \int_0^{1/2} \frac{\omega(f,t)}{t |\log t|^{1/2}} dt < \infty \, . 
\end{equation}
\item {\bf Weak invariance principle}. If the integral condition (\ref{firstcondbis})
holds
then the series
\begin{equation}\label{covseries}
\sigma^2(f)= \bar \lambda ((f- \bar \lambda (f))^2) + 2 \sum_{k>0} \bar \lambda ((f-\bar \lambda (f)) \cdot f \circ T^k)
\end{equation}
converges absolutely, and  the process
$$
\Big \{ \frac{1}{\sqrt n} \sum_{k=1}^{[nt]} (f \circ T^k - \bar{\lambda} (f))
,  t \in [0,1] \Big \}
$$ converges to a Wiener process with variance $\sigma^2(f)$  in the space $D([0,1])$ of c\`adl\`ag function equipped with the uniform metric. This follows from Theorem  \ref{THM2}
together with  Proposition 5 in \cite{DeMeVo}.

\item {\bf Rates of convergence in the strong invariance principle}. Let $p \in ]2,4]$, and assume that   $$\omega(f, x)\leq C |\log (x)|^{-a}  \ \text{in a neighborhood
of $0$ for  some} \  a>\frac{1+\sqrt{1+4p(p-2)}}{2p}+1-\frac{2}{p}\, .$$ Then,  enlarging ${\mathbb T}^d$ if necessary, there exists a sequence 
$(Z_i)_{ i \geq 1}$ of independent and identically distributed Gaussian random variables with mean zero and variance $\sigma^2(f)$ defined in (\ref{covseries}) such that, for any $t>2/p$,
$$
\sup_{1 \leq k \leq n} \Big |\sum_{i=1}^k (f \circ T^i- \bar{\lambda} (f)) - \sum_{i=1}^k Z_i \Big | = o\big(n^{1/p} (\log (n))^{(t+1)/2}\big)\quad \text{almost surely as $n \rightarrow \infty$.}
$$
In particular, we obtain the rate of convergence $ n^{1/2- \epsilon}$ for some $\epsilon>0$ as soon as $a>1/2$, and the rate $n^{1/4} \log (n)$ as soon as $a\geq 3/2$. 
This follows from Theorem \ref{THM2} together with Theorem 3.1 in \cite{DeMePene1}.
\end{enumerate}

\section{Appendix}

\setcounter{equation}{0}

In this section, we prove Remark \ref{cardan}, so we give the solutions of the equation \eqref{debileq}. We first write (\ref{debileq}) under the following form $p^3+bp^2+cp+d=0$. Following the classical Cardan method, we set $p':=-\frac{b^2}3+c$ and $q:=\frac b{27}(2b^2-9c)+d$ (this leads to the formulas for $p'$ and $q$  as given in Remark \ref{cardan}). Observe that $p^3+bp^2+cp+d=0$ means that $z=p+\frac b3$ satisfies $z^3+p'z+q=0$.
We then compute as usual $\Delta:=q^2+\frac 4{27}(p')^3$. We get
\begin{multline*}\Delta = ((64/27)\ell-(64/27)\ell^2-16/27)\alpha^4+(-(128/27)\ell^3 \\ +(128/27)\ell^2-(32/9)\ell)
\alpha^3+((32/27)\ell-(64/27)\ell^4+(16/27)\ell^2
-16/27-(128/27)\ell^3)\alpha^2 \\+
(-(32/9)\ell-(32/27)\ell^2-(64/27)\ell^4-(32/9)\ell^3)\alpha-(16/27)\ell^2-(16/27)\ell^4<0 \, .
\end{multline*}
Since $\Delta$ is negative, we use the usual expression of the solutions $z$ with $\cos$ and $\arccos$ 
(to which we substract $b/3$). So the solutions are  $$p_k=2\frac{\ell+1-\alpha}3+2\sqrt{-\frac {p'} 3}\cos\left(\frac 13\arccos\left(-\frac q2\sqrt{\frac {27}{-(p')^3}}\right)+\frac{2k\pi}3\right) $$
for $k\in\{0,1,2\}$. Clearly $p_1 < p_2 < p_0$. The unique solution in $]2 \ell , 4 \ell [$ is then $p_0$.

\bigskip

\noindent {\bf Acknowledgements.} The authors would like to thank the two referees for
carefully reading the manuscript and for numerous suggestions that improved the presentation of this paper.

\end{document}